\newtheorem{theorem}{Theorem}
\newtheorem{lemma}{Lemma}[section]
\newtheorem{observation}{Observation}
\newtheorem{corollary}[theorem]{Corollary}
\newtheorem{question}{Question}
\newtheorem{conjecture}{Conjecture}
\newlist{proofEnum}{enumerate}{1}
\setlist[proofEnum]{wide, labelwidth=!, labelindent=0pt, label=(\alph{proofEnumi})}
\let\leq\leqslant
\let\geq\geqslant
\newcommand{\cH}{\mathcal{H}}
\newcommand{\cE}{\mathcal{E}}
\newcommand{\cF}{\mathcal{F}}
\newcommand{\cP}{\mathcal{P}}
\newcommand{\bN}{\mathbb{N}}
\newcommand{\set}[1]{{\left\{#1\right\}}}
\newcommand{\floor}[1]{{\left\lfloor #1 \right\rfloor}}
\newcommand{\rs}{R} 
\newcommand{\rn}{r} 
\newcommand{\oto}{\to}
\newcommand{\ors}{R_<}
\newcommand{\orn}{r_<}
\newcommand{\ord}{r_<^m}
\newcommand{\con}{\circ}
\newcommand{\hang}[3]{#1\oplus_{#2}#3}
\newcommand{\uo}[1]{\widetilde{#1}}
\newcommand{\girth}{{\rm girth}} 
\begin{document}
\title{Minimal Ordered Ramsey Graphs}

\author{Jonathan Rollin}
\affil{\small Department of Mathematics, Karlsruhe Institute of Technology} 
\date{}

\maketitle


\begin{abstract}
 An \emph{ordered graph} is a graph equipped with a linear ordering of its vertex set.
 A pair of ordered graphs is \emph{Ramsey finite} if it has only finitely many minimal ordered Ramsey graphs and \emph{Ramsey infinite} otherwise.
 Here an ordered graph $F$ is an \emph{ordered Ramsey graph} of a pair $(H,H')$ of ordered graphs if for any coloring of the edges of $F$ in colors red and blue there is either a copy of $H$ with all edges colored red or a copy of $H'$ with all edges colored blue.
 Such an ordered Ramsey graph is \emph{minimal} if neither of its proper subgraphs is an ordered Ramsey graph of $(H,H')$.
 If $H=H'$ then $H$ itself is called Ramsey finite.
 
 We show that a connected ordered graph is Ramsey finite if and only if it is a star with center being the first or the last vertex in the linear order. 
 In general we prove that each Ramsey finite (not necessarily connected) ordered graph $H$ has a pseudoforest as a Ramsey graph and therefore is a star forest with strong restrictions on the positions of the centers of the stars.
 In the asymmetric case we show that $(H,H')$ is Ramsey finite whenever $H$ is a so-called monotone matching.
 Among several further results we show that there are Ramsey finite pairs of ordered stars and ordered caterpillars of arbitrary size and diameter.
 This is in contrast to the unordered setting where for any Ramsey finite pair $(H,H')$ of forests either one of $H$ or $H'$ is a matching or both are star forests (with additional constraints).
 
 
 Several of our results give a relation between Ramsey finiteness and the existence of sparse ordered Ramsey graphs.
 Motivated by these relations we characterize all pairs of ordered graphs that have a forest as an ordered Ramsey graph and all pairs of connected ordered graphs that have a pseudoforest as a Ramsey graph.

 Our results show similarities between the ordered and the unordered setting for graphs containing cycles and significant differences for forests.
\end{abstract}

\section{Introduction}\label{sec:intro}

Graph Ramsey theory is concerned with the phenomenon that for any given graph $H$ there are graphs $F$, called the \emph{Ramsey graphs of $H$}, such that for any $2$-coloring of the edges of $F$ there is a copy of $H$ in $F$ with all its edges of the same color.
For most graphs $H$ it is a challenging problem to determine all its Ramsey graphs exactly which is solved only for few classes of graphs like small matchings or stars~\cite{AllRamseyMinP3,BEL76}.
Therefore, particular properties of the set $\rs(H)$ of all Ramsey graphs of $H$ and its members are studied.
This line of research was initiated by fundamental work of Ne{\v{s}}et{\v{r}}il and R\"odl~\cite{CliqueNoRamseyGeneral} and Burr, Erd\H{o}s, and Lov\'asz~\cite{BEL76}.
One of the most famous questions asks for the smallest number of vertices of graphs in $\rs(H)$ called the Ramsey number and denoted $\rn(H)$.
Determining the Ramsey number of complete graphs is a challenging problem on its own and no exact formula is known yet.

In this paper we study some structural questions from the Ramsey theory for ordered graphs.
Here an \emph{ordered graph} is a graph equipped with a linear ordering of its vertex set.
An \emph{(ordered) subgraph} of an ordered graph $G$ is a subgraph of the underlying graph of $G$ that inherits the ordering of vertices from $G$.
Analogously to the unordered setting we have the following definitions.
An ordered graph $F$ is an \emph{ordered Ramsey graph} of some pair $(H,H')$ of ordered graphs if for any coloring of the edges of $F$ in colors red and blue there is either a copy of $H$ with all its edges colored red or a copy of $H'$ with all its edges colored blue.
In this case we write $F\oto (H,H')$ to indicate this fact and let $\ors(H,H')=\{F\mid F\oto(H,H')\}$.
If $H=H'$, then we write $F\oto H$ and $\ors(H)=\ors(H,H)$.
A fundamental relation between ordered and unordered Ramsey graphs is given in the following observation.
\begin{observation}\label{obs:basicRelOrderedRamsey}
 Let $F$ and $H$ be an ordered graphs with underlying (unordered) graphs $\uo{F}$ and $\uo{H}$.
 If $F$ is an ordered Ramsey graph of $H$, then $\uo{F}$ is a Ramsey graph of $\uo{H}$.
\end{observation}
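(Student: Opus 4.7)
The plan is to unwrap the definitions and observe that the underlying unordered graph of $F$ has exactly the same edge set as $F$, so any edge $2$-coloring of $\uo{F}$ is automatically an edge $2$-coloring of $F$. The ordered Ramsey property of $F$ then produces a monochromatic ordered copy of $H$, and forgetting the vertex order turns this into a monochromatic unordered copy of $\uo{H}$ in $\uo{F}$. That is essentially the whole argument; the only thing to verify carefully is that a monochromatic ordered copy of $H$ in $F$ really does give a monochromatic (unordered) copy of $\uo{H}$ in $\uo{F}$, which follows because ``being a copy'' in the ordered sense is strictly more restrictive than in the unordered sense.

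Concretely, I would proceed as follows. First, fix an arbitrary $2$-coloring $c\colon E(\uo{F})\to\{\text{red},\text{blue}\}$. Since $E(\uo{F})=E(F)$ by definition of the underlying graph, $c$ can equivalently be read as an edge coloring of $F$. Next, apply the hypothesis $F\oto H$ to obtain a monochromatic ordered copy $H'$ of $H$ inside $F$. Finally, take the underlying unordered graph $\uo{H'}$ of this copy: it is a subgraph of $\uo{F}$ isomorphic (as an unordered graph) to $\uo{H}$, and every edge still carries the same color, so $\uo{H'}$ is a monochromatic copy of $\uo{H}$ in $\uo{F}$ under $c$. Since $c$ was arbitrary, $\uo{F}\oto\uo{H}$.

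There is no real obstacle here; the statement is essentially a sanity check confirming that ordered Ramsey theory refines the unordered theory. The one conceptual point worth making explicit in the write-up is that the implication only goes in one direction: an unordered Ramsey graph of $\uo{H}$ equipped with an arbitrary ordering need not be an ordered Ramsey graph of $H$, because the monochromatic copy of $\uo{H}$ guaranteed by the unordered property may fail to respect the linear order. This asymmetry is precisely what makes the rest of the paper nontrivial, so it is worth flagging in a short remark immediately after the proof.
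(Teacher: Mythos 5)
Your argument is correct and is exactly the one-line definitional unwrapping the paper has in mind (the paper states this as an observation without proof precisely because the argument is this routine): a coloring of $\uo{F}$ is a coloring of $F$, and forgetting the order on a monochromatic ordered copy of $H$ yields a monochromatic copy of $\uo{H}$. Your closing remark about the failure of the converse matches the paper's own discussion following the observation (illustrated by its Figure~\ref{fig:exampleOrderedRamsey1}).
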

 If $H$ is a complete graph, then also the reverse statement holds, otherwise it may fail.
 Figure~\ref{fig:exampleOrderedRamsey1} shows an example of an ordered graph $H$ and a Ramsey graph $F'$ of $\uo{H}$ which does not form an ordered Ramsey graph of $H$ in any ordering.
\begin{figure}
 \centering
 \includegraphics{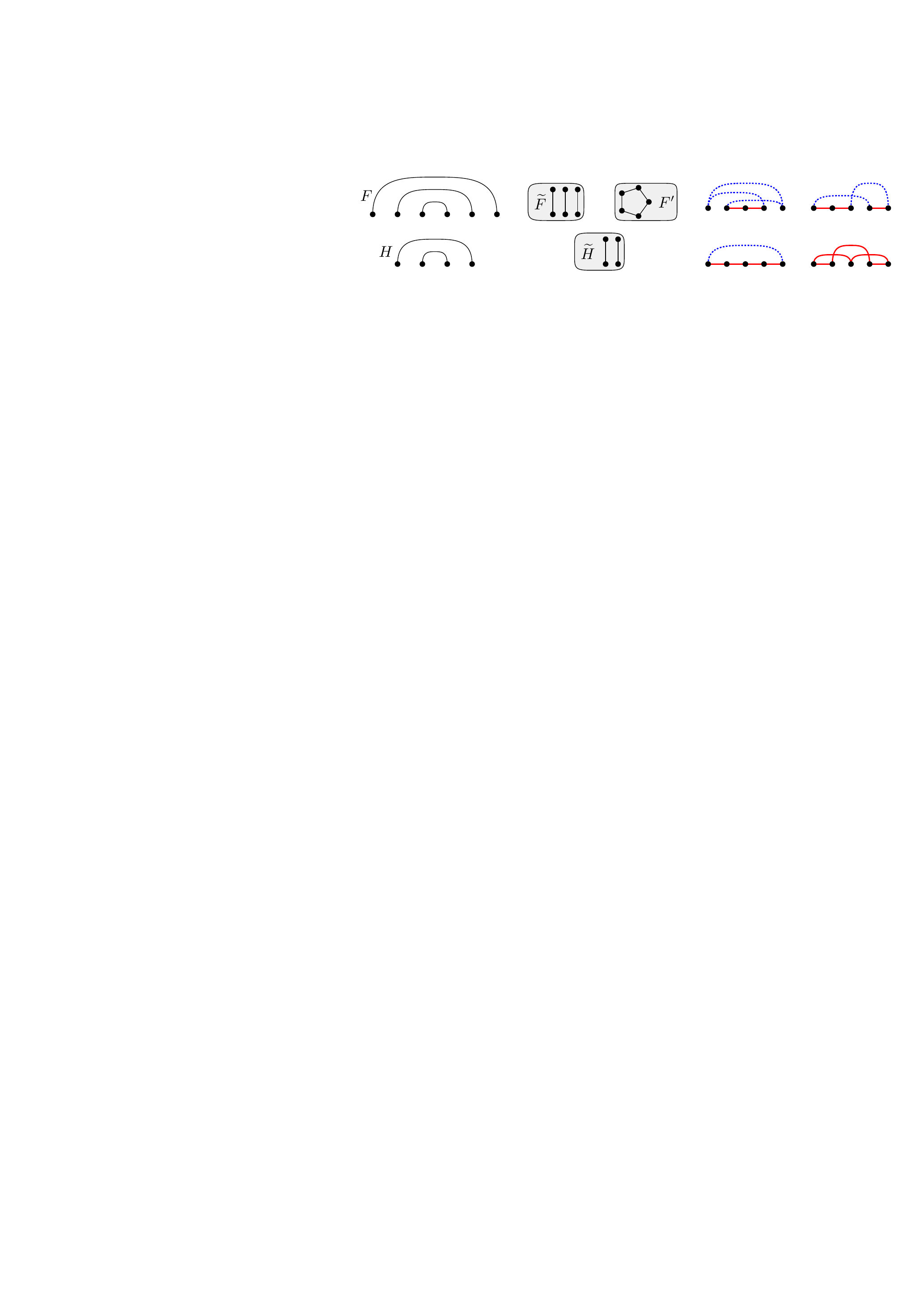}
 \caption[Ordered Ramsey graphs.]{An ordered matching $H$ and an ordered Ramsey graph $F$ of $H$ (left). An (unordered) $5$-cycle $F'$ is a Ramsey graph of the underlying graph $\uo{H}$ of $H$, but no ordering of the vertices of $F'$ yields an ordered Ramsey graph of $H$ (right). Here we show just a few possible orderings of $F'$. In~\cite{RollinDiss}  we show that any ordered Ramsey graph of $H$ contains a copy of $F$ (which is not possible for orderings of $F'$).}
 \label{fig:exampleOrderedRamsey1}
\end{figure}

We study the fundamental question whether the set $\ors(H,H')$ has a finite number of minimal elements, where $F\in\ors(H,H')$ is \emph{minimal} if $F'\not\in\ors(H,H')$ for each proper subgraph $F'$ of $F$.
In this case we call the pair $(H,H')$ \emph{Ramsey finite}.
The corresponding question in the unordered setting is studied intensively (see Theorems~\ref{thm:UnorderedCycleInf}, \ref{thm:unorderedForestCycle}, \ref{thm:Faudree} below) but a full answer is not known.
Our results indicate that any Ramsey finite pair of ordered graphs necessarily has sparse ordered Ramsey graphs.
Motivated by this relations we first give several results on sparse ordered Ramsey graphs, which might be of independent interest.

\paragraph{Outline.}
Next we present some basic definitions which are used throughout the paper.
In particular some frequently used notions for ordered graphs are introduced.
Then we present our results on sparse ordered Ramsey graphs, followed by our results on minimal ordered Ramsey graphs.
A brief summary of previous work on Ramsey theory for ordered graphs is given at the end of Section~\ref{sec:intro}.
In Section~\ref{sec:proofs} we prove our results and Section~\ref{sec:conclusion} contains concluding remarks and open questions.

\paragraph{Preliminary Remarks and Definitions.}
Before stating our results we need to introduce some notions.
For a positive integer $n$ we shall  write $[n]=\{1,\ldots,n\}$.
For a given (ordered) graph $G$ we refer to its vertex set by $V(G)$ and to its edge set by $E(G)$.
We consider the vertices of an ordered graph laid out along a horizontal line from left to right such that a vertex $u$ is \emph{to the left} of a vertex $v$ if $u<v$, and to the right if $v<u$.
For two sets $U$, $U'\subseteq V(G)$ we write $U\preceq U'$ ($U\prec U'$) if $u\leq u'$ ($u<u'$) for all $u\in U$ and $u'\in U'$.
For two subgraphs of $G'$, $G''$ of $G$ we write $G'\preceq G''$ ($G'\prec G''$) if $V(G')\preceq V(G'')$ ($V(G')\prec V(G'')$).
An \emph{interval} of an ordered graph $G$ is a set $I$ of consecutive vertices of $G$, i.e., for any $u$, $v\in I$, $z\in V(G)$, with $u\leq z\leq v$, we have $z\in I$.

A complete graph on $n$ vertices is denoted $K_n$.
A path on $n$ vertices is denoted $P_n$ and an ordered path $P=u_1\cdots u_n$ is a \emph{monotone path} if $u_1<\cdots<u_n$.
A \emph{partial matching} is a graph without any copy of $P_3$.
A \emph{right star} is an ordered star with all its leaves to the right of its center and a \emph{left star} is defined accordingly.
A right star with $k$ leaves is denoted $\vec{S}_k$.
Right and left stars with exactly two edges are also called \emph{bend}.
An ordered graph $G$ is a \emph{right caterpillar} if it is connected and for some $i\geq 1$ there are vertices $u_{i}>\cdots>u_0$ in $G$ such that each edge in $G$ is of the form $u_jv$ with $j\in[i]$ and $u_j\geq v\geq u_{j-1}$.
For $j\in[n]$ the \emph{$j^\text{th}$ segment} of a right caterpillar $G$ is the subgraph $S_j$ of $G$ induced by $\{v\in V(G)\mid u_j\geq v\geq u_{j-1}\}$.
The \emph{defining sequence} of $G$ is $|E(S_1)|,\ldots,|E(S_i)|$.
Left caterpillars are defined accordingly.
See Figure~\ref{fig:namedOrderedGraphs} for examples.
\begin{figure}
 \centering
 \includegraphics{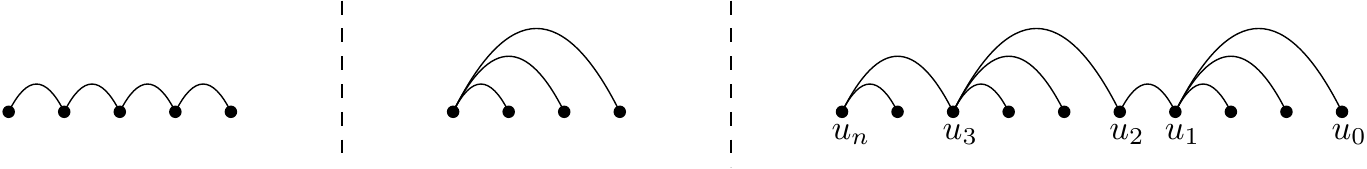}
 \caption{A monotone path (left), a right star (middle), and a right caterpillar with four segments (right).}
 \label{fig:namedOrderedGraphs}
\end{figure}

\paragraph{Sparse Ordered Ramsey Graphs.}

First, we state some known results on densities of (unordered) Ramsey graphs.
Given a graph $G$ let $m(G)=\max\{|E(G')|/|V(G')|\mid G'\subseteq G\}$ denote its \emph{density} and let $m_2(G) =\max\{(\lvert E(G')\rvert-1)/(\lvert V(G')\rvert-2)\mid G'\subseteq G, \lvert V(G')\rvert\geq 3, \lvert E(G')\rvert\geq 1\}$ denote its \emph{$2$-density}, provided that $G$ has at least two edges.
For a pair $(H,H')$ of graphs let $r^m(H,H')=\inf\{m(F)\mid F\in\rs(H,H')\}$ denote its \emph{Ramsey density}.
The Ramsey density is studied by R\"odl and Ruci\'nski~\cite{RR93}, Kurek and Ruci\'nski~\cite{KR05}, and M\"utze and Peter~\cite{MP13}.
The exact value is known only for few classes of graphs, including stars and complete graphs.
The following result is due to R\"odl and Ruci\'nski~\cite{RR93} (see also Nenadov and Steger~\cite{NS16}).
\begin{theorem}[\cite{RR93}]\label{thm:avgDegRamsey}
 Let $H$ be a graph with $m_2(H)>1$. Then $m(F)>m_2(H)$ for each $F\in\rs(H)$.
\end{theorem}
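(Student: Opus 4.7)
The plan is to prove the contrapositive: assuming $F$ is a graph with $m(F)\leq m_2(H)$, I will show $F\notin\rs(H)$ by exhibiting a $2$-edge-coloring of $F$ with no monochromatic copy of $H$. Setting $d:=m_2(H)$, the definition of $m_2$ gives $|E(H')|-1\leq d\,(|V(H')|-2)$ for every subgraph $H'\subseteq H$ with $|V(H')|\geq 3$, while the density hypothesis on $F$ gives $|E(F_0)|\leq d\,|V(F_0)|$ for every subgraph $F_0\subseteq F$. These are the only structural inputs.

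First, I would turn the density bound on $F$ into a vertex ordering by iterative peeling: repeatedly remove a vertex of smallest degree in the remaining graph, which by the density bound is at most $\lfloor 2d\rfloor$. This yields a linear order $v_1,\ldots,v_n$ of $V(F)$ in which each vertex has back-degree at most $\lfloor 2d\rfloor$. From it I derive an edge ordering by placing the edge $v_iv_j$ at step $\max\{i,j\}$, so that each edge has only boundedly many "earlier" neighbouring edges at the moment it is considered.

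The technical heart is then to bound, for every edge $e$ of $F$, the number $N(e)$ of copies of $H$ in $F$ for which $e$ is the last edge in the chosen ordering. I would organise the enumeration of such copies as a canonical rooted extension process that builds the copy vertex by vertex starting from $e$: at each step either a new vertex is added (controlled by the back-degree bound on $F$) or an identification is performed that forces several edges on the already-chosen part (forbidden unless the resulting subgraph $H'$ of $H$ satisfies $|E(H')|-1\leq d\,(|V(H')|-2)$, which is exactly the $m_2$ bound). An averaging over all subgraphs $H'\subseteq H$ simultaneously would yield $N(e)<2^{|E(H)|-1}$, with the strict inequality relying on $m_2(H)>1$. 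Finally, I would colour edges in reverse order of the edge ordering, at each step choosing the colour that does not complete a monochromatic copy of $H$; since every monochromatic copy is completed at its last edge and the count $N(e)$ is strictly less than $2^{|E(H)|-1}$, a safe choice is always available.

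The main obstacle will be the third step: the density inequality $m(F)\leq m_2(H)$ does not directly bound the number of copies of $H$ completable at a given edge, and the argument must invoke the extremal definition of $m_2$ uniformly over all subgraphs of $H$ rather than only over a single maximiser $H^*$. The hypothesis $m_2(H)>1$ is essential here, since for $m_2(H)=1$ (as with matchings or forests) the count degenerates and the conclusion genuinely fails; capturing this sharpness without losing constants is the delicate part of the proof.
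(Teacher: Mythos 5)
First, note that the paper does not prove this statement at all: it is quoted verbatim from R\"odl and Ruci\'nski~\cite{RR93} (with a pointer to~\cite{NS16}), so there is no in-paper proof to compare against. Your proposal therefore has to stand on its own, and as written it has a genuine gap at exactly the point you flag as "the main obstacle" --- but the problem is worse than a missing computation, because the coloring scheme it feeds into is also broken. In your final step you color edges one at a time and claim a "safe choice is always available" because the number $N(e)$ of copies of $H$ completed at $e$ is less than $2^{|E(H)|-1}$. That threshold is a first-moment quantity (it is what you would need the \emph{total} number of copies of $H$ in $F$ to be below for a uniformly random coloring to work in expectation); it does nothing for a sequential greedy argument. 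For the greedy step to go through you need $N(e)\leq 1$: as soon as two copies of $H$ are completed at $e$, one may already be entirely red off $e$ and the other entirely blue off $e$, and then no color for $e$ is safe. So even if your counting lemma were proved, the conclusion would not follow from it.

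The deeper issue is that the bound you actually need ($N(e)\leq 1$ in some edge order, or an equivalent structural statement) is the real content of the theorem, and it cannot be extracted from the degeneracy bound $\lfloor 2m_2(H)\rfloor$ alone --- that bound discards too much, since it cannot distinguish $m(F)\leq m_2(H)$ from slightly larger densities, whereas the theorem is a sharp threshold statement. The standard route (in~\cite{RR93} and the algorithmic treatments of the $0$-statement) is global rather than edge-local: one reduces to a minimal $F\in\rs(H)$, observes that minimality forces every edge of $F$ to lie in at least two copies of $H$ (deleting $e$ yields a good coloring, and $e$ must close both a red and a blue copy), and then shows by a careful analysis of how copies of $H$ can be glued along shared edges --- this is where the full inequality $|E(H')|-1\leq m_2(H)(|V(H')|-2)$ over all subgraphs $H'$ and the hypothesis $m_2(H)>1$ enter --- that such an "every edge doubly covered" structure forces $m(F)>m_2(H)$. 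Your sketch gestures at this ("averaging over all subgraphs $H'$") but does not carry it out, and the chain-of-copies examples (e.g.\ copies of $H$ glued successively along single edges, whose density approaches $m_2(H)$ from below) show that the gluing analysis is delicate: the argument must exploit that in such near-extremal configurations some edge lies in only one copy. As it stands, the proposal is a plausible opening move plus an unproved (and, in the stated form, insufficient) key lemma, so it does not constitute a proof.
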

This leaves to consider graphs with $m_2(H)\leq 1$.
Observe that $m_2(H)\leq 1$ if and only if $H$ is a forest.
Recall that a partial matching is a graph without any copy of $P_3$.
More precisely we have $m_2(H)= 1/2$ if and only if $H$ is a partial matching and $m_2(H)= 1$ if and only if $H$ is a forest which is not a partial matching.
Further observe that $m(F)<1$ if and only if $F$ is a forest and $m(F)=1$ if and only if each component of $F$ contains at most one cycle and $F$ is not a forest.
Graphs of density at most $1$ are called \emph{pseudoforests}.
A \emph{proper pseudoforest} is a graph of density exactly $1$, that is, a pseudoforest that contains at least one cycle.
It seems well-known which pairs of graphs have Ramsey density at most $1$.
We give a proof of the following result in Section~\ref{sec:proofRamseyForest} for completeness.
\begin{lemma}\label{lem:unorderedPseudo}
 For each pair $(H,H')$ of graphs with at least one edge each we have
 \begin{enumerate}
  \item $r^m(H,H') < 1$ if and only if $(H,H')$ is a pair of a forest and a star forest,\label{enum:unorderedRamseyForest}
  
  \item $r^m(H,H') = 1$ and there is a pseudoforest in $\rs(H,H')$ if and only if $(H,H')$ is a pair of a partial matching and a proper pseudoforest or both $H$ and $H'$ are forests of stars and copies of $P_4$, both with at least one copy of $P_4$.
 \end{enumerate}
\end{lemma}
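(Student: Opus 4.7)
For part~(a), the forward direction rests on a 2-edge-colouring trick for forests: root each tree component of $F$ and colour each edge by the parity of the depth of its endpoint closer to the root. Any hypothetical monochromatic $P_4$ has a unique topmost vertex, and a short case analysis on its location (internal or an endpoint) shows that the three edges of such a $P_4$ have upper endpoints whose depths include both parities, contradicting monochromaticity. Hence each colour class is $P_4$-free, i.e.\ a star forest, so $F\oto(H,H')$ forces one of $H,H'$ to be a star forest; the other is automatically a forest since it embeds in $F$. For the backward direction, given a forest $H$ and a star forest $H'$, a classical construction yields a tree Ramsey graph: take a sufficiently branching rooted tree of depth $|V(H)|$, so that in any 2-colouring with no blue copy of $H'$ every internal vertex has enough red edges to its children, and $H$ embeds greedily into the red subgraph level by level.

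For part~(b), let $F$ be a proper pseudoforest with $F\oto(H,H')$. Pick one edge from each cycle of $F$; the resulting set $M$ is a matching (distinct cycles lie in distinct components) and $F\setminus M$ is a forest. Colouring $M$ red and $F\setminus M$ blue shows that $H$ is a partial matching or $H'$ is a forest; swapping colours gives the symmetric statement. Using part~(a) to exclude the (forest, star forest) case leaves exactly two sub-cases: either one of $H,H'$ is a partial matching and the other, being a pseudoforest but not a forest, is a proper pseudoforest (this is~(i)); or both are forests with neither a star forest, so each contains a $P_4$.

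For this last ``both forests containing $P_4$'' sub-case I would exhibit a second 2-colouring of $F$ whose colour classes have components only stars and copies of $P_4$, thereby forcing both $H$ and $H'$ into that form. On each cycle of $F$ use the periodic length-$3$ pattern $RRRBBB\ldots$, so that every maximal monochromatic arc on the cycle is a $P_4$; colour the off-cycle tree edges with the parity colouring from part~(a). A local inspection is needed at each cycle vertex where cycle edges meet tree edges: one must verify, possibly after locally recolouring a few tree edges, that no monochromatic $P_5$ or ``broom'' $K_{1,3}$-with-a-pendant-edge grows across the junction. This junction analysis is the main obstacle; it is conceptually clear but case-heavy, and care is required so that the local fixes do not violate the star-forest property of the tree-portion elsewhere.

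The backward direction of~(b) is handled by explicit constructions. For a pair (partial matching of size $k$, proper pseudoforest $H'$), take $F$ to be the disjoint union of $k$ copies of $H'$; this is a pseudoforest, and in any 2-colouring either some copy is entirely blue (yielding a blue $H'$) or each copy contributes at least one red edge, and these $k$ edges lie in disjoint components and thus form a red partial matching of size $k$. For the (stars and $P_4$'s, stars and $P_4$'s) case, a sufficiently long cycle with carefully chosen pendant stars at its vertices is a unicyclic Ramsey graph: the cycle supplies the required $P_4$-components in each colour class while the pendants supply the star-components, matching the forward-direction analysis.
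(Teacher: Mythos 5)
Part~(a) and the opening reductions of part~(b) (the matching-versus-forest colouring of a proper pseudoforest and the resulting case split) are correct and essentially follow the paper's route. The serious problem is your treatment of the sub-case where both $H$ and $H'$ are forests containing a $P_4$. You propose a single colouring of $F$ in which \emph{both} colour classes are forests of stars and copies of $P_4$, ``thereby forcing both $H$ and $H'$ into that form''. This does not follow: from $F\oto(H,H')$ and one such colouring you may conclude only that there is a red copy of $H$ \emph{or} a blue copy of $H'$, hence that \emph{at least one} of $H,H'$ is a forest of stars and $P_4$'s --- never both. Concretely, your argument cannot exclude the pair $(P_4,P_5)$ from having a pseudoforest Ramsey graph: in your colouring the red class is itself a forest of stars and $P_4$'s and so may well contain a copy of $H=P_4$, so the colouring is not a witness that $F\not\oto(P_4,P_5)$. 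What is needed (and what the paper does) is an \emph{asymmetric} colouring built from the assumed bad structure: if some component of, say, $H'$ is neither a star nor a $P_4$, then $H'$ contains a $P_5$ or a $P_4$ with a pendant edge, and one colours $F$ so that the red class is a star forest (killing $H$, which contains a $P_4$) while the blue class is a star forest plus a matching, respectively a star forest plus vertex-disjoint cycles, which contains no $P_5$, respectively no $P_4$ with a pendant edge. (Even this requires care in choosing the roots of the star-colourings relative to the removed cycle edges, or a blue $P_5$ can arise across a reinserted edge.) Your cycle-pattern-plus-junction-repair construction, besides being admittedly incomplete, is therefore aimed at the wrong target.

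A secondary gap is in the backward direction of~(b) for the ``stars and $P_4$'s'' case. A single unicyclic graph (a long cycle with pendant stars) will not in general arrow such a pair: already $C_n\not\oto(P_4,P_4)$ whenever $4\mid n$, via the periodic pattern of two red then two blue edges, and even when a cycle with pendants does arrow $(P_4,P_4)$, producing the several \emph{disjoint} star- and $P_4$-components of $H$ and of $H'$ simultaneously requires additional disjoint pieces. The paper instead takes a vertex-disjoint union of a fixed graph known to arrow $(P_4,P_4)$ (a $5$-cycle with one pendant edge per vertex) together with forest Ramsey graphs for the star/star and star/$P_4$ sub-pairs; such a union is still a pseudoforest. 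Your construction for the pair of a partial matching and a proper pseudoforest is correct, up to supplying enough vertices to accommodate isolated vertices of the matching.
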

With Theorem~\ref{thm:avgDegRamsey} we obtain the following corollary, implicitly in~\cite{FK00,RR93}.
Note that for any matching $H$ there is a matching $F\in\rs(H)$, that is, $m(F)=m_2(H)=1/2$.
\begin{corollary}[\cite{FK00,RR93}]\label{cor:unorderedDens2Dens}
 Let $H$ be a graph. Then there is a graph $F\in\rs(H)$ with $m(F)\leq m_2(H)$ if and only if each component of $H$ is either a star or a copy of $P_4$.
\end{corollary}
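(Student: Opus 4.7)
The plan is to derive the corollary by combining Theorem~\ref{thm:avgDegRamsey} with Lemma~\ref{lem:unorderedPseudo} applied to the symmetric pair $(H,H)$, together with the observation (recorded immediately before the corollary) that every matching $H$ admits a matching $F\in\rs(H)$. Throughout I would tacitly assume $H$ has at least one edge, since otherwise $m_2(H)$ is undefined.

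For the forward direction, suppose some $F\in\rs(H)$ satisfies $m(F)\leq m_2(H)$. By Theorem~\ref{thm:avgDegRamsey} we cannot have $m_2(H)>1$, so $m_2(H)\leq 1$ and hence $H$ is a forest. If $H$ is a partial matching then every component is a single edge, which is a star, and we are done. Otherwise $H$ contains a copy of $P_3$, so $m_2(H)=1$ and the assumption forces $F$ to be a pseudoforest. I would then apply Lemma~\ref{lem:unorderedPseudo} to $(H,H)$: either part~(a) applies, giving that $H$ is both a forest and a star forest (so each component is a star), or part~(b) applies (the alternative that $H$ is simultaneously a partial matching and a proper pseudoforest is impossible), forcing $H$ to be a forest of stars and copies of $P_4$ with at least one $P_4$. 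In both cases every component of $H$ is a star or a copy of $P_4$.

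For the reverse direction, I would split on the structure of $H$. If $H$ is a matching, the remark before the corollary supplies a matching $F\in\rs(H)$ with $m(F)=1/2=m_2(H)$. If $H$ is a star forest containing a copy of $P_3$, then $m_2(H)=1$ and Lemma~\ref{lem:unorderedPseudo}(a) yields a forest $F\in\rs(H)$ with $m(F)<1$. Finally, if some component of $H$ is a copy of $P_4$, then $m_2(H)=1$ and the second alternative of Lemma~\ref{lem:unorderedPseudo}(b) provides a pseudoforest $F\in\rs(H)$ with $m(F)\leq 1$. Since all the real work is absorbed into Lemma~\ref{lem:unorderedPseudo}, no serious obstacle arises; the only points requiring care are to observe that $m_2(H)\in\{1/2,1\}$ for every forest with at least one edge and to eliminate the degenerate diagonal case in part~(b).
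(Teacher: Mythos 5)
Your proposal is correct and follows exactly the route the paper intends: the corollary is stated as an immediate consequence of Theorem~\ref{thm:avgDegRamsey} and Lemma~\ref{lem:unorderedPseudo} applied to the diagonal pair $(H,H)$, together with the remark that a matching has a matching Ramsey graph, which is precisely the combination you carry out. The case analysis (ruling out $m_2(H)>1$, splitting on whether $H$ is a partial matching, a star forest with a $P_3$, or has a $P_4$ component, and discarding the impossible partial-matching/proper-pseudoforest alternative in part~(b)) is exactly what is needed and contains no gaps.
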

Here we are interested in the corresponding parameter $\ord(H,H')=\inf\{m(F)\mid F\in\ors(H,H')\}$ for a pair $(H,H')$ of ordered graphs.
We completely determine the pairs of ordered graphs that have forests as ordered Ramsey graphs in the following theorem.
\begin{theorem}\label{thm:OrderedRamseyForest}
 Let $H$ and $H'$ be ordered graphs with at least one edge each.
 Then $\ord(H,H')<1$ if and only if $H$ and $H'$ are forests and one of the following statements holds.
 \begin{enumerate} 
  \item $H$ or $H'$ is a partial matching.\label{enum:matching}
  \item For one of $H$ or $H'$ each component is a right star and for the other each vertex has at most one neighbor to the left.\label{enum:rightForests}
  \item For one of $H$ or $H'$ each component is a left star and for the other each vertex has at most one neighbor to the right.\label{enum:leftForests}
  \item For one of $H$ or $H'$ each component is a left or a right star and for the other each component is a monotone path.\label{enum:RightLeftStars}
 \end{enumerate}
 Moreover $\ors(H,H')$ contains a partial matching if and only if both $H$ and $H'$ are partial matchings.
\end{theorem}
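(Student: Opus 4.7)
The proof is naturally split into the main equivalence and the concluding `moreover' claim. For both directions of the equivalence the starting point is that $\ord(H,H')<1$ yields an ordered forest $F\in\ors(H,H')$, and applying Observation~\ref{obs:basicRelOrderedRamsey} and Lemma~\ref{lem:unorderedPseudo}(a) to the underlying graphs immediately forces $\uo H$ and $\uo{H'}$ to be forests with at least one of them, say $\uo{H'}$, a star forest. The `only if' direction must then lift this to one of the ordered conditions (a)-(d), while the `if' direction must construct an ordered forest in $\ors(H,H')$ under each of those conditions.

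For the `only if' direction I would use three obstruction colorings of an arbitrary ordered forest $F$. The first, the \emph{bipartition coloring}, picks a proper $2$-coloring $c\colon V(F)\to\{0,1\}$ of $F$ (which exists since a forest is bipartite) and colors each edge $uv$ with $u<v$ by $c(u)$; since $c(u)\neq c(v)$, no color class contains a monotone $P_3$. The second, the \emph{rooted-forest coloring}, roots each tree of $F$ and, for each edge $uv$ with $u<v$, colors it red if $v$ is the child of $u$ and blue if $u$ is the child of $v$; since every vertex has at most one parent, each vertex has at most one red left-edge and at most one blue right-edge, so neither a red left-bend nor a blue right-bend appears. The third is the mirror of the second, obtained by swapping the two colors, which similarly rules out red right-bends and blue left-bends. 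The first coloring forbids a red $H$ and a blue $H'$ simultaneously whenever both $H$ and $H'$ contain a monotone $P_3$; the second does so whenever $H$ contains a left-bend and $H'$ contains a right-bend; and the third does so whenever $H$ contains a right-bend and $H'$ contains a left-bend. If $F\oto(H,H')$ none of the three can succeed, giving a conjunction of three disjunctions on the bend/monotone-$P_3$ content of $(H,H')$. Combined with $\uo{H'}$ being a star forest (so its ordered components are right stars, left stars, or single edges, with mixed-center star components handled by additionally swapping the roles of $H$ and $H'$), the surviving possibilities are exactly (a)-(d).

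For the `if' direction I would construct an explicit ordered forest in $\ors(H,H')$ in each case: in case (a) a sufficiently long ordered matching whose edges realize $H$ in enough monochromatic blocks; in cases (b) and (c) a tree obtained by recursively attaching copies of $H$ to the leaves of a large right (respectively left) star, with a red copy of $H$ extracted by induction along the `at most one left neighbor' orientation of $H$ whenever a blue $H'$ is absent; and in case (d) a sufficiently long monotone path, exploiting that any monochromatic subgraph of a monotone path is again a union of monotone paths into which both $H$ and $H'$ embed. The moreover statement then follows: if both $H$ and $H'$ are partial matchings, a long enough ordered matching serves as the Ramsey graph, and conversely if $F\in\ors(H,H')$ is a partial matching then coloring all of $F$ in a single color exhibits $H$ and $H'$ as ordered sub-matchings of $F$, forcing both to be partial matchings themselves. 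The most delicate step is the case analysis in the `only if' direction: verifying that the three obstruction colorings cover precisely the complement of (a)-(d) requires carefully tracking how $H$ and $H'$ each interact with the `star forest' constraint imposed by Lemma~\ref{lem:unorderedPseudo}(a).
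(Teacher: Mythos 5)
Your overall strategy -- derive necessary conditions from a small set of obstruction colorings of an arbitrary ordered forest, then construct explicit forest Ramsey graphs in the remaining cases -- matches the paper's, and your bipartition and rooted-forest colorings are both sound (the latter is a neat alternative to the inductive argument the paper uses for Lemma~\ref{lem:noRamseyPseudoforest}\ref{enum:sameOrientaionPseudo} restricted to forests). However, the ``only if'' direction has a genuine gap: your three colorings do not pin the surviving pairs down to (a)--(d). Take $H$ to be a monotone $P_3$ and $H'$ to be the ordered path $u_1u_2u_3u_4$ with vertex order $u_2<u_4<u_3<u_1$, a ``zigzag'' $P_4$: it contains no monotone $P_3$, while $u_2$ has two neighbors to the right and $u_3$ has two to the left. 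Here $H$ is a star forest, $H$ contains no bend of either orientation, and $H'$ contains no monotone $P_3$, so the preconditions of all three of your colorings fail and your constraint system is satisfied; yet the pair lies in none of (a)--(d), so you still owe a proof that $\ors(H,H')$ contains no forest. This is exactly what the paper's fourth obstruction supplies (Lemma~\ref{lem:noRamseyForest}\ref{enum:alternatingPath}: one graph contains a monotone $P_3$ and the other contains \emph{any} ordered $P_4$), proved via the bend-coloring of Lemma~\ref{lem:coloringFacts}\ref{enum:bendCol}, a coloring with no analogue in your list. Your phrase about handling ``mixed-center star components by swapping the roles of $H$ and $H'$'' hides precisely this case: after the swap the non-star-forest side may be a zigzag $P_4$, which none of your colorings sees.

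The ``if'' direction also has concrete defects. In case (a) a ``sufficiently long ordered matching'' is not a Ramsey graph: when $H'$ is not a matching the Ramsey graph must contain a copy of $H'$, and even when both $H$ and $H'$ are partial matchings the order type matters -- a long monotone matching contains no pair of crossing edges, so it is not Ramsey for the crossing matching on vertices $1<2<3<4$ with edges $13$ and $24$. The paper instead blows up an ordered complete graph on $\orn(H,H')$ vertices into a carefully arranged disjoint union of copies of $H'$. In case (d) a long monotone path fails outright: coloring its edges alternately red and blue leaves each color class a monotone matching, which contains neither a star on two edges nor a monotone $P_3$; your claim that every monochromatic subgraph of a monotone path still contains $H$ and $H'$ is false, and the paper's construction here is a genuinely recursive interleaving of Ramsey graphs for $(H-S,H')$ and $(H,H'-w)$ with attached stars. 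Your sketch for cases (b) and (c) is closest in spirit to the paper's inductive blow-up of $H$ by copies of a Ramsey graph of $(H,H'-w)$, but as written it ties the induction to the wrong member of the pair (the ``at most one left neighbor'' hypothesis lives on $H'$, not on $H$).
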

Further we characterize all pairs of connected ordered graphs that have pseudoforests as Ramsey graphs.
\begin{theorem}\label{thm:OrderedRamseyPseudoforest}
 Let $H$ and $H'$ be connected ordered graphs with at least one edge each.
 Then $\ord(H,H')=1$ and there is an ordered pseudoforest in $\ors(H,H')$ if and only if $(H,H')$ is a pair of $K_2$ and a connected ordered proper pseudoforest or both $H$ and $H'$ form a monotone $P_3$.
\end{theorem}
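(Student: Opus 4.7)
My plan is to prove the two directions separately, using Lemma~\ref{lem:unorderedPseudo} and Theorem~\ref{thm:OrderedRamseyForest} as black boxes, together with one explicit construction and one bipartiteness argument for the monotone~$P_3$ case.

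\textbf{Sufficiency.} If $H=K_2$ and $H'$ is a connected ordered proper pseudoforest, I observe that $H'$ itself satisfies $H'\oto(K_2,H')$: any $2$-coloring of $E(H')$ either contains a red edge (a red~$K_2$) or leaves $H'$ entirely blue. Since $m(H')=1$, this gives $\ord(H,H')\leq 1$; the matching lower bound follows from Observation~\ref{obs:basicRelOrderedRamsey} together with Lemma~\ref{lem:unorderedPseudo} applied to the pair $(K_2,\uo{H'})$ of a matching and a connected proper pseudoforest. If $H=H'=P_3^{\text{mon}}$, I would exhibit the explicit ordered pseudoforest $F$ on vertices $0<1<2<3<4$ with edge set $\{01,12,13,23,34\}$ (a triangle on $\{1,2,3\}$ with one pendant on each side) and verify $F\oto P_3^{\text{mon}}$ via the local observation that, at any vertex $b$ with both a left and a right neighbour, avoiding a monochromatic monotone~$P_3$ centred at $b$ forces the left- and right-incident edges at $b$ to use opposite colours; chasing this around $\{1,2,3\}$ produces the contradictory chain $c(12)=c(13)=c(23)=c(12)$. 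For the matching lower bound $\ord\geq 1$, I would show that no ordered forest is Ramsey for $P_3^{\text{mon}}$: the subgraph of such a forest induced on its vertices with neighbours on both sides is again a forest and hence bipartite, so a proper $2$-coloring of it by type induces a valid edge colouring with no monochromatic monotone~$P_3$.

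\textbf{Necessity.} Let $F$ be an ordered pseudoforest with $F\oto(H,H')$ and $\ord(H,H')=1$. By Observation~\ref{obs:basicRelOrderedRamsey}, $\uo F$ is an unordered pseudoforest Ramsey graph of $(\uo H,\uo{H'})$, so Lemma~\ref{lem:unorderedPseudo} combined with the connectedness of $H$ and $H'$ forces $(\uo H,\uo{H'})$ to be one of: $(K_2,\text{connected proper pseudoforest})$, a pair of a tree and a star (both with at least two edges), or $(P_4,P_4)$. The subcases $(K_2,\text{star})$ and $(K_2,K_2)$ are excluded because the star (resp.\ $K_2$) is itself an ordered Ramsey graph of density strictly less than~$1$, contradicting $\ord=1$. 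The first case gives the first alternative of the theorem directly.

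For the (tree, star) and $(P_4,P_4)$ cases I combine Theorem~\ref{thm:OrderedRamseyForest} with an explicit colouring argument. Since $\ord=1$, none of conditions~\ref{enum:matching}--\ref{enum:RightLeftStars} of Theorem~\ref{thm:OrderedRamseyForest} applies, which severely restricts the admissible orderings of the star, the tree, and of $P_4$. For every remaining ordered pair other than $(P_3^{\text{mon}},P_3^{\text{mon}})$ I construct a $2$-colouring of $F$ avoiding both a red ordered copy of $H$ and a blue ordered copy of $H'$, by generalising the type-construction from the sufficiency proof and modifying it locally around the (unique) cycle of $F$ so as to kill the tree target as well. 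This leaves only $(P_3^{\text{mon}},P_3^{\text{mon}})$.

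\textbf{Main obstacle.} The principal difficulty is precisely this last case analysis: Theorem~\ref{thm:OrderedRamseyForest} leaves several borderline orderings of stars, trees, and $P_4$ with $\ord\geq 1$, and for each such ordering I must construct an explicit edge $2$-colouring of an arbitrary ordered pseudoforest that simultaneously avoids both target copies. The construction is tightest when $F$ consists of a single cycle with only a few pendant edges, where the flexibility in the type-assignment around the cycle is very limited and one must exploit the shape of both the star target and the tree target at once.
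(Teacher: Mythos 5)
Your sufficiency direction is essentially complete and coincides with the paper's: your graph on $0<1<2<3<4$ with edge set $\{01,12,13,23,34\}$ is exactly the paper's monotone path $u_1\cdots u_5$ with the chord $u_2u_4$ added, and the colour-chasing argument around the triangle is correct in substance (note, though, that the contradiction is $c(12)=c(13)=c(23)$ together with $c(12)\neq c(23)$ forced at the vertex $2$; the chain $c(12)=c(13)=c(23)=c(12)$ as you wrote it is not itself contradictory). One small repair: for the lower bound in the monotone $P_3$ case you should bipartite-colour \emph{all} of $F$ with respect to a proper $2$-colouring of the whole forest (as in Lemma~\ref{lem:coloringFacts}); restricting to the subforest of vertices with neighbours on both sides does not assign a colour to every edge. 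The reduction of the underlying graphs via Observation~\ref{obs:basicRelOrderedRamsey} and Lemma~\ref{lem:unorderedPseudo}, and the exclusion of $(K_2,\text{tree})$ subcases, are fine.

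The genuine gap is in the necessity direction, and it is exactly the step you defer as the ``main obstacle.'' After Theorem~\ref{thm:OrderedRamseyForest} removes the pairs with $\ord(H,H')<1$, you are left with, among others: a monotone $P_3$ against an arbitrarily ordered $P_4$; a monotone $P_3$ against a $3$-edge star that is neither a left nor a right star; and pairs where one graph has a vertex with two neighbours to the right while the other has a vertex with two neighbours to the left. For each of these you must exhibit, for \emph{every} ordered pseudoforest $F$ --- including non-bipartite ones whose unique odd cycle sits arbitrarily in the vertex order --- a $2$-colouring with no red copy of one target and no blue copy of the other. ``Generalising the type-construction and modifying it locally around the cycle'' is a direction, not a proof: the paper needs three genuinely different constructions here (Lemma~\ref{lem:noRamseyPseudoforest}(b)--(d)), namely an inductive leaf-removal argument combined with an alternating colouring of cycles for the opposite-orientation case, a bipartite-colouring with a carefully chosen bipartition combined either with an all-blue tree of monotone paths or with a bend-colouring anchored at a cycle vertex for the ordered $P_4$ case, and a further cycle-plus-bipartition colouring for the $3$-edge star case. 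The choice of which cycle edge to delete and which side of the bipartition receives the cycle vertices is where the real work lies, and none of it is carried out in your proposal; until those colourings are written down the characterisation is verified in only one direction.
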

\begin{corollary}\label{cor:orderedDens2Dens}
 Let $H$ be a connected ordered graph. Then there is an ordered graph $F\in\ors(H)$ with $m(F)\leq m_2(H)$ if and only if $H$ is a left star, a right star, or a monotone $P_3$.
\end{corollary}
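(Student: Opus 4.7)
The plan is to derive the corollary directly from Theorems~\ref{thm:OrderedRamseyForest} and~\ref{thm:OrderedRamseyPseudoforest} applied to the symmetric pair $(H,H)$, after first using Theorem~\ref{thm:avgDegRamsey} to reduce to the case where $H$ is a tree.

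For the necessity direction I would start with $F\in\ors(H)$ satisfying $m(F)\leq m_2(H)$. Observation~\ref{obs:basicRelOrderedRamsey} places $\uo F$ in $\rs(\uo H)$, and since density does not depend on the vertex ordering, Theorem~\ref{thm:avgDegRamsey} forces $m_2(\uo H)\leq 1$, so $H$ is a forest; being connected, it is a tree. The case $H=K_2$ is immediate since $K_2$ is both a left and a right star, so I may assume $H$ has at least two edges, hence $m_2(H)=1$ and $F$ is a pseudoforest. I would then split on whether $F$ is a forest or a proper pseudoforest. If $F$ is a forest, Theorem~\ref{thm:OrderedRamseyForest} applies to $(H,H)$ and I would walk through its four cases with $H'=H$: (a) fails because a connected tree with $\geq 2$ edges contains $P_3$; (b) with both sides equal to $H$ forces $H$ to be a right star, the auxiliary ``at most one left neighbour'' clause being automatic for a right star; (c) symmetrically yields a left star; (d) requires $H$ to be simultaneously a monotone path and a star, which admits only $K_2$. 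If instead $F$ is a proper pseudoforest with $m(F)=1=m_2(H)$, Theorem~\ref{thm:OrderedRamseyPseudoforest} applies; its $K_2$-alternative is excluded because $H$ is a tree with $\geq 2$ edges, and the only remaining option is that $H$ is a monotone $P_3$.

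For the sufficiency direction the same theorems produce the desired $F$. When $H$ is a right (resp.\ left) star, condition (b) (resp.\ (c)) of Theorem~\ref{thm:OrderedRamseyForest} taken with $H'=H$ supplies a forest $F\in\ors(H)$, and then $m(F)<1=m_2(H)$. When $H$ is a monotone $P_3$, Theorem~\ref{thm:OrderedRamseyPseudoforest} supplies a pseudoforest $F\in\ors(H)$ satisfying $m(F)\leq 1=m_2(H)$.

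The main obstacle is the bookkeeping in the necessity argument, specifically verifying that when the two arguments of Theorem~\ref{thm:OrderedRamseyForest} coincide and are required to be a connected tree, the two-clause conditions (b), (c), (d) collapse respectively to ``$H$ is a right star'', ``$H$ is a left star'', and ``$H$ has at most one edge''. This is not conceptually deep but is the only real content in the argument beyond citing the two preceding theorems.
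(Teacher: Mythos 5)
Your derivation is essentially the paper's intended one: the corollary is stated without a separate proof as a direct consequence of Theorems~\ref{thm:OrderedRamseyForest} and~\ref{thm:OrderedRamseyPseudoforest} (together with Theorem~\ref{thm:avgDegRamsey} and Observation~\ref{obs:basicRelOrderedRamsey} to reduce to trees), and your collapse of conditions (a)--(d) for the symmetric pair $(H,H)$ with $H$ connected is exactly the required bookkeeping. One small imprecision: when the given $F$ is a proper pseudoforest you invoke Theorem~\ref{thm:OrderedRamseyPseudoforest}, but its ``only if'' direction assumes $\ord(H,H)=1$, which does not follow merely from having one pseudoforest in $\ors(H)$ --- if $\ord(H,H)<1$ a forest Ramsey graph also exists and you must fall back on Theorem~\ref{thm:OrderedRamseyForest}, which again yields a left or right star, so the stated conclusion is unaffected; splitting on $\ord(H,H)<1$ versus $\ord(H,H)=1$ rather than on the shape of the particular $F$ avoids this.
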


\paragraph{Ramsey (In)Finiteness.}

Recall that a graph $F\in\rs(H,H')$ is \emph{minimal} if $F'\not\in\rs(H,H')$ for each proper subgraph $F'$ of $F$.
A pair $(H,H')$ of graphs is \emph{Ramsey finite} if there are only finitely many minimal graphs in $\rs(H,H')$, and \emph{Ramsey infinite} otherwise.
%
%
%
The following theorems summarize some of the known main results in the unordered setting.
\begin{theorem}[\cite{RR93,RR95}, see~\cite{Lu94}]\label{thm:UnorderedCycleInf}
 If a graph $H$ contains a cycle, then $H$ is Ramsey infinite.
\end{theorem}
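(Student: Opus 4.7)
The plan is to combine the $2$-density lower bound of Theorem~\ref{thm:avgDegRamsey} with the Ne{\v{s}}et{\v{r}}il--R\"odl high-girth Ramsey theorem (available from~\cite{CliqueNoRamseyGeneral}) to exhibit minimal Ramsey graphs of $H$ with arbitrarily large girth. First I would observe that since $H$ contains a cycle $C$, the subgraph $C$ itself witnesses $m_2(H)\geq (|V(C)|-1)/(|V(C)|-2)>1$. Hence Theorem~\ref{thm:avgDegRamsey} implies that every $F\in\rs(H)$ satisfies $m(F)>1$, so no $F\in\rs(H)$ is a pseudoforest. In particular, every $F\in\rs(H)$ (and thus every minimal element of $\rs(H)$) contains a cycle.

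Next, for each integer $n$ I would invoke the Ne{\v{s}}et{\v{r}}il--R\"odl theorem applied to the complete graph $K_{|V(H)|}$: there exists a graph $F_n$ with $F_n\to K_{|V(H)|}$ and $\girth(F_n)>n$. Since $H\subseteq K_{|V(H)|}$, any monochromatic copy of $K_{|V(H)|}$ contains a monochromatic copy of $H$, so $F_n\in\rs(H)$. Now pick any minimal subgraph $F_n'\subseteq F_n$ with $F_n'\in\rs(H)$ (such a subgraph exists since $F_n$ has finitely many edges). Because girth cannot decrease when passing to a subgraph, $\girth(F_n')\geq \girth(F_n)>n$; and by the previous paragraph $F_n'$ actually contains a cycle, so its girth is a finite number strictly greater than~$n$.

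Finally I would conclude by a counting argument: if there were only finitely many minimal graphs $M_1,\ldots,M_k$ in $\rs(H)$, then $g:=\max_i \girth(M_i)$ would be a finite upper bound on the girth of any minimal Ramsey graph. Choosing $n\geq g$ in the construction above gives a minimal Ramsey graph $F_n'$ with girth exceeding~$g$, a contradiction. Therefore $\rs(H)$ has infinitely many minimal elements, i.e.\ $H$ is Ramsey infinite.

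The only nontrivial input is the existence of high-girth Ramsey graphs for complete graphs, and that is exactly the main obstacle handled by the cited Ne{\v{s}}et{\v{r}}il--R\"odl result; once this is granted, everything reduces to the elementary observations that girth is monotone under taking subgraphs and that $m_2(H)>1$ forces every Ramsey graph of $H$ to be non-forest.
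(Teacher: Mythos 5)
Your first step is fine: a cycle $C$ in $H$ gives $m_2(H)\geq |E(C)|/(|V(C)|-2)\cdot\frac{|E(C)|-1}{|E(C)|}>1$ (indeed $(|V(C)|-1)/(|V(C)|-2)>1$), so Theorem~\ref{thm:avgDegRamsey} forces $m(F)>1$ for every $F\in\rs(H)$. But the second step contains a fatal error: there is \emph{no} graph $F_n$ with $F_n\to K_{|V(H)|}$ and $\girth(F_n)>n$ once $|V(H)|\geq 3$. Coloring all edges of $F_n$ red shows that any Ramsey graph of $K_m$ must itself contain a copy of $K_m$, hence a triangle, hence has girth $3$. More generally, any $F\in\rs(H)$ contains a copy of $H$ and therefore satisfies $\girth(F)\leq\girth(H)<\infty$. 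So when $H$ contains a cycle the girth of \emph{every} (minimal) Ramsey graph of $H$ is bounded by the fixed constant $\girth(H)$, and your concluding counting argument, which needs minimal Ramsey graphs of unbounded girth, cannot be salvaged. The high-girth strategy is the right tool for pairs of \emph{forests} whose Ramsey graphs are forced to contain cycles (this is exactly Observation~\ref{obs:orderedInfHighGirth} and Theorem~\ref{thm:unavoidNoRamseyForest} in the ordered setting), but it is structurally unavailable for $H$ containing a cycle.

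Note also that the paper does not prove this theorem; it is quoted from~\cite{RR93,RR95} (see~\cite{Lu94}). The standard argument, which the paper mirrors in its ordered analogue Theorem~\ref{thm:ordDensityRamseyInfinite}, replaces girth by \emph{density}: take $F=G(n,p)$ with $p=cn^{-1/m_2(H)}$. By the R\"odl--Ruci\'nski $1$-statement, $F\to H$ asymptotically almost surely, while by the small-subgraph theorem $F$ a.a.s.\ contains no subgraph on at most $t$ vertices of density exceeding $m_2(H)$. Since Theorem~\ref{thm:avgDegRamsey} says every Ramsey graph of $H$ has density exceeding $m_2(H)$, no $t$-vertex subgraph of $F$ is Ramsey, so $F$ contains a minimal Ramsey graph on more than $t$ vertices; letting $t\to\infty$ gives infinitely many minimal Ramsey graphs. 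If you want a correct proof along your general lines, this is the substitution to make: your (correct) use of Theorem~\ref{thm:avgDegRamsey} should feed into a sparseness-of-random-graphs argument rather than a girth argument.
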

\begin{theorem}[\cite{B78},\cite{Lu94}]\label{thm:unorderedForestCycle}
Let $H$ be a forest without isolated vertices and let $H'$ be graph with at least one cycle.
\begin{enumerate}
 \item If $H$  is a matching, then $(H,H')$ is Ramsey finite.\label{enum:unordredMatchingFinite}
 
 \item If $H$  is not a matching, then $(H,H')$ is Ramsey infinite.\label{enum:unordredNotMatchingInfinite}
\end{enumerate}
\end{theorem}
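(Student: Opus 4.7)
The plan is to treat the two parts by rather different techniques: a uniform bound on the order of minimal Ramsey graphs for part (a), and an explicit unbounded family of minimal Ramsey graphs for part (b).

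For part (a), let $H = nK_2$. The aim is to show that $|V(F)|$ is bounded uniformly over all minimal $F \in \rs(nK_2, H')$, which immediately gives Ramsey finiteness since only finitely many graphs have bounded order. The starting observation is that, by minimality, for every edge $e \in E(F)$ there is a colouring $c_e$ of $F - e$ with no red $nK_2$ and no blue copy of $H'$. Extending $c_e$ to $F$ by colouring $e$ in either colour must produce a forbidden monochromatic copy, so colouring $e$ red completes a red matching of size $n$, while colouring $e$ blue completes a blue copy of $H'$ through $e$. In particular, every edge of $F$ lies in a copy of $H'$, and for every $e$ the red subgraph of $c_e$ is a matching of size at most $n-1$. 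Combining a Tur\'an-type bound on $H'$-free graphs (valid because $H'$ contains a cycle) with the matching bound on the red side controls $|E(F)|$ in terms of $n$, $|V(H')|$, and $|V(F)|$; pushing this a little further by counting copies of $H'$ through a fixed edge yields the desired bound on $|V(F)|$.

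For part (b), since $H$ is a forest but not a matching it contains a $P_3$, and $H'$ contains a cycle. The plan is to exhibit an infinite sequence of pairwise non-isomorphic minimal graphs in $\rs(H, H')$. I would use long cycles or high-girth graphs $G_m$ as scaffolds (with $m \to \infty$), attach to each edge of $G_m$ a small gadget built from Ramsey-critical pieces for $H'$, and then verify two things: first, any $2$-colouring of the resulting graph $F_m$ that avoids a blue $H'$ must force two adjacent red edges on the scaffold, hence a red $P_3 \subseteq H$; second, for every edge $f \in E(F_m)$ there is an explicit colouring of $F_m - f$ with no red $H$ and no blue $H'$. The large girth of $G_m$ is used to prevent propagation of local obstructions across the deleted edge, so that the minimality check is truly local. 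Letting $m\to\infty$ produces graphs of arbitrarily large order in the minimal Ramsey family of $(H, H')$.

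The main obstacle is part (b): the simultaneous design of scaffold and gadget, so that the construction arrows $(H, H')$ globally while every single edge deletion is locally repairable. This is the delicate step and is sensitive to the precise structure of $H$ and $H'$. Minimality is typically verified by case analysis on where the deleted edge sits relative to the scaffold and its gadgets. The finiteness proof in (a), by contrast, is more routine once one exploits the critical-colouring dichotomy described above.
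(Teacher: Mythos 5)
First, a point of orientation: the paper does not prove Theorem~\ref{thm:unorderedForestCycle} at all --- it is quoted as a known result of Burr~\textit{et al.}~\cite{B78} (part~(a)) and {\L}uczak~\cite{Lu94} (part~(b)), so there is no in-paper proof to match your argument against. Judged on its own, your proposal has genuine gaps in both parts.

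In part~(a), the step ``for every $e$ the red subgraph of $c_e$ is a matching of size at most $n-1$'' is false: absence of a red $nK_2$ only says the red subgraph has \emph{matching number} at most $n-1$; it can still be, say, a union of $n-1$ huge stars, with $\Theta(n\,|V(F)|)$ edges. Worse, the Tur\'an-type bound you invoke for the blue side does not help: if $H'$ contains an odd cycle, an $H'$-free graph on $N$ vertices may have $\Theta(N^2)$ edges, so combining the two bounds only controls $|E(F)|$ as a function of $|V(F)|$ and gives no bound on $|V(F)|$, which is what Ramsey finiteness requires. The actual proof in~\cite{B78} has to exploit the structure of graphs with bounded matching number (small vertex covers / Gallai--Edmonds decompositions of the possible red subgraphs) to localize the arrowing condition and then argue that a large minimal $F$ would contain a redundant edge; your sketch does not contain this idea, and ``pushing this a little further by counting copies of $H'$ through a fixed edge'' is not a substitute for it.

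In part~(b) you describe a scaffold-plus-gadget construction but explicitly defer its design and the minimality verification, which you yourself identify as ``the delicate step''; as written this is a plan, not a proof. Note also that there is a cleaner route, which is in fact the one this paper adapts to the ordered setting (Observation~\ref{obs:orderedInfHighGirth} and Theorems~\ref{thm:ordDensityRamseyInfinite} and~\ref{thm:unavoidNoRamseyForest}): since $H$ contains a $P_3$ and $H'$ contains a cycle, colouring all edges of any forest blue shows that no forest arrows $(H,H')$, so every Ramsey graph of the pair contains a cycle; it then suffices to exhibit Ramsey graphs of arbitrarily large girth, whence minimal ones have arbitrarily large girth and hence arbitrarily large order. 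Producing such high-girth Ramsey graphs is where {\L}uczak's work lies (via sparse random graphs), and that existence statement is exactly what is missing from your outline. If you want to salvage your explicit-gadget approach, you must actually construct $F_m$, prove $F_m\to(H,H')$, and carry out the edge-deletion case analysis; none of these is routine.
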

\begin{theorem}[\cite{Fa91}]\label{thm:Faudree}
 Let $H$ and $H'$ be forests without isolated vertices. Then there is a constant $n_0$ such that $(H,H')$ is Ramsey finite if and only if one of the following statements holds.
 \begin{enumerate} 
  \item At least one of $H$ or $H'$ is a matching.
  \item Both $H$ and $H'$ are vertex disjoint unions of a star with an odd number of edges and a matching.
  \item One of $H$ or $H'$ is a vertex disjoint union of a matching and at least two stars with $m_1$ respectively $m_2$ edges, while the other is a vertex disjoint union of a matching on $n$ edges and a star with $n_1$ edges. Moreover $m_1$, $n_1$ are odd, $m_1\geq n_1+m_2-1$, and $n\geq n_0$.
 \end{enumerate}
\end{theorem}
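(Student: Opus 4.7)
The plan is to treat the two implications of this characterization separately, as is standard for Ramsey-finiteness results. The theorem extends classical work of Burr, Erd\H{o}s, and Lov\'asz, and the natural approach is to combine an \emph{amalgamation construction} on the necessity side with a \emph{boundedness argument} on the sufficiency side.

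For the sufficient direction, I would treat case (a) first and then the two parity cases (b), (c). In case (a), if $H$ is a matching with $k$ edges, then any $F\in\rs(H,H')$ either contains $k$ independent red edges in every coloring, or the red edges have a vertex cover of bounded size in some coloring; in the latter case a blue copy of $H'$ must emerge from the very dense blue complement. A careful analysis (essentially the Cockayne--Lorimer argument for Ramsey numbers of matchings) shows that any minimal such $F$ has at most $f(k,|V(H')|)$ edges, which gives finiteness. In cases (b) and (c), the odd-star components are decisive: if a star $S_m$ with $m$ odd is a component of $H$, then for any vertex $v$ of degree at least $2m-1$ in $F$, either its red or its blue incident edges form a star of size at least $m$ with the correct majority parity. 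Combining this with the matching part of $H$ and the bound $m_1\ge n_1+m_2-1$ in case (c), I would show that a minimal $F$ cannot contain vertices of too high degree, and then that $F$ has bounded size overall.

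For the necessary direction I would construct, for each forest pair $(H,H')$ falling outside (a)--(c), an infinite family of minimal Ramsey graphs. The generic tool here is the Burr--Erd\H{o}s--Lov\'asz signal-sender/amalgamation method: one builds two ``gadget'' graphs, each in $\rs(H,H')$, which can be glued at a designated vertex or edge in many ways to produce arbitrarily large Ramsey graphs while preserving minimality. When neither $H$ nor $H'$ is a matching they both contain a $P_3$, and the two leaves of such a path provide the identification sites needed for the gluing. The subtle subcases are those in which one of $H,H'$ is an odd-star-plus-matching but the other fails the combinatorial inequality in (c); here I would construct explicit amalgams that fail the parity condition in a controlled way, forcing an infinite family of minimal graphs.

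The main obstacle is case (c) with its asymmetric numerical constraints $m_1,n_1$ odd, $m_1\ge n_1+m_2-1$ and $n\ge n_0$. The appearance of a threshold $n_0$ indicates that for small matching parts one can still mount an amalgamation attack even when the stated inequalities hold, and only once $n$ is sufficiently large do the parity/degree obstructions become rigid enough to cap the size of any minimal Ramsey graph. Pinning down $n_0$ and verifying that every minimal $F$ lies below the bound it imposes is the heart of the argument and requires a delicate simultaneous degree and component count on both color classes; I expect this to be where the bulk of the technical work would go.
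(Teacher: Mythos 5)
This statement is quoted background: the paper attributes it to Faudree~\cite{Fa91} and never proves it, so there is no in-paper argument to compare against. Judged on its own, your proposal is an outline of intentions rather than a proof, and it has genuine gaps beyond mere incompleteness. In the sufficiency direction, your case~(a) argument is miscalibrated: for a pair of a matching $H$ and a forest $H'$ there is no ``very dense blue complement'' to exploit, because the relevant minimal Ramsey graphs are themselves sparse (forests); the actual finiteness proof, due to Burr, exhibits an explicit bounded-size Ramsey graph (essentially a disjoint union of copies of $H'$ indexed by the copies of $H'$ in a complete graph of Ramsey-number order, as in the proof of Theorem~\ref{thm:OrderedRamseyForest}(a) of this paper) and shows every minimal Ramsey graph embeds into it. A Cockayne--Lorimer edge count does not by itself bound the number of \emph{minimal} Ramsey graphs. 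In cases (b) and (c) you assert that a degree analysis ``shows that a minimal $F$ cannot contain vertices of too high degree, and then that $F$ has bounded size overall,'' but bounded degree plus minimality does not bound the order of $F$ without a further argument; this is exactly the delicate part and it is not supplied.

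In the necessity direction, the Burr--Erd\H{o}s--Lov\'asz signal-sender/amalgamation machinery you invoke is built for graphs that are not forests (it relies on $2$- or $3$-connected pieces and on gluing that does not create unintended monochromatic copies); for pairs of forests the known infiniteness proofs use different tools --- Ne\v{s}et\v{r}il--R\"odl's high-girth Ramsey graphs when neither forest is a star forest, and ad hoc explicit families (as in Burr--Erd\H{o}s--Faudree--Schelp and Faudree) for the star-forest subcases. Saying ``the two leaves of a $P_3$ provide the identification sites'' does not address why the glued graph remains Ramsey or why infinitely many of the resulting minimal graphs are non-isomorphic. Finally, the threshold $n_0$ in case~(c) is discussed only speculatively. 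In short: you have named plausible proof strategies, some of which do not actually apply to forests, and none of the substantive steps is carried out; this cannot be accepted as a proof of the cited theorem.
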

The asymmetric case for a pair $(H,H')$ of graphs containing a cycle is not completely resolved.
Ne\v{s}et\v{r}il and R\"odl~\cite{NR78} prove that $(H,H')$ is Ramsey infinite if both $H$ and $H'$ are $3$-connected or both are of chromatic number at least $3$ while results in~\cite{BFS85} show that it is sufficient to consider pairs of $2$-connected graphs.
Bollob\'as~\textit{et al.}~\cite{B01} prove that $(H,C_k)$ is Ramsey infinite for each cycle $C_k$ if $H$ is $2$-connected and contains no induced cycles of length at least $\ell$, provided $k\geq \ell\geq 4$.
We think that all Ramsey finite pairs of graphs are characterized in Theorem~\ref{thm:Faudree}, see the discussion of the asymmetric case in Section~\ref{sec:conclusion}.


As for unordered graphs, we call a pair $(H,H')$ of ordered graphs \emph{Ramsey finite} if there are only finitely many minimal graphs in $\ors(H,H')$, and \emph{Ramsey infinite} otherwise.
Here an ordered graph $F\in\ors(H,H')$ is \emph{minimal} if $F'\not\in\ors(H,H')$ for each proper ordered subgraph $F'$ of $F$.
In case $H=H'$ we call $H$ itself Ramsey finite or infinite, respectively.
We shall establish results similar to Theorems~\ref{thm:UnorderedCycleInf} and~\ref{thm:unorderedForestCycle}\ref{enum:unordredMatchingFinite} while our results for ordered forests show that the ordering plays a significant role.

For wide families of ordered graphs we shall show that for a carefully chosen probability large random ordered graphs contain large minimal ordered Ramsey graphs.
Note that random graphs $G(n,p)$ are usually defined with vertex set $[n]$ and can be considered as ordered graphs without modifications.
R\"odl and Ruci\'nski~\cite{RR95} (see also~\cite{Gugel17}) determine for a given (unordered) graph $H$ the threshold probability that $G(n,p)$ is a Ramsey graph of $H$, provided that $H$ is not a forest whose components are stars or copies of $P_4$ only.
They prove that for such a graph there are constants $c$ and $c'$ such that if $p>cn^{-1/m_2(H)}$ then the probability that $G(n,p)$ is a Ramsey graph of $H$ tends to $1$ as $n\to\infty$ (the $1$-statement), while if $p<c'n^{-1/m_2(H)}$ then the probability that $G(n,p)$ is a Ramsey graph of $H$ tends to $0$ as $n\to\infty$ (the $0$-statement).
Note that the $0$-statement immediately carries over to ordered graphs by Observation~\ref{obs:basicRelOrderedRamsey}.
We prove a corresponding $1$-statement for ordered graphs.
Our proof closely follows a recent new proof of the $1$-statement for (unordered) graphs due to Nenadov and Steger~\cite{NS16} using the hypergraph container method.
It would be interesting whether one can deduce the $1$-statement for ordered graphs from the unordered $1$-statement directly, without reformulating the proof.
\begin{theorem}\label{thm:ordRandomRamsey}
 Let $H$ be an ordered graph that is not a partial matching. There is a constant $c$ such that  if $p>cn^{-1/m_2(H)}$ then the probability that $G(n,p)$ is an ordered Ramsey graph of $H$ tends to $1$ as $n\to\infty$.
\end{theorem}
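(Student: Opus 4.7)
The plan is to adapt Nenadov and Steger's hypergraph container proof of the unordered $1$-statement \cite{NS16} to the ordered setting. Two observations make this adaptation natural. First, the $2$-density $m_2(H)$ depends only on the underlying graph $\uo H$, so the quantitative threshold $n^{-1/m_2(H)}$ is unchanged. Second, random graphs $G(n,p)$ have vertex set $[n]$ and so can be regarded as ordered graphs, with ordered subgraph containment corresponding to order-preserving embeddings. The hypothesis that $H$ is not a partial matching precisely rules out $m_2(H)=1/2$ and thus guarantees $m_2(H)\geq 1$, which is the regime where the container method produces useful bounds.

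I would work with the auxiliary hypergraph $\cH$ on vertex set $E(K_n)$ whose hyperedges are the edge sets of ordered copies of $H$ in the ordered complete graph on $[n]$. The crucial inputs for the container lemma are that (i) $\cH$ has $\Theta(n^{|V(H)|})$ hyperedges, and (ii) for every ordered subgraph $H'\subseteq H$ with $|V(H')|\geq 2$, the number of hyperedges extending a fixed ordered copy of $H'$ is $O(n^{|V(H)|-|V(H')|})$. Since ordered copies of $H$ are a subfamily of unordered copies, both bounds are inherited from the unordered setting, as is the key codegree inequality $(|E(H)|-|E(H')|)/(|V(H)|-|V(H')|)\leq m_2(H)$. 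A standard application of the Saxton--Thomason (or Balogh--Morris--Samotij) container lemma then yields a family $\cC\subseteq 2^{E(K_n)}$ with $\log|\cC|=O(n^{2-1/m_2(H)})$ and a constant $\delta=\delta(H)>0$ such that every subset of $E(K_n)$ containing no ordered copy of $H$ lies in some $C\in\cC$, and each container hosts at most $\delta n^{|V(H)|}$ ordered copies of $H$.

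The probabilistic step proceeds as in \cite{NS16}. Assume $G=G(n,p)\not\oto H$ with positive probability; then there is a $2$-coloring of $E(G)$ with no monochromatic ordered copy of $H$, so each color class lies in some container and $G\subseteq C_1\cup C_2$ with $C_1,C_2\in\cC$. An ordered supersaturation step shows that if $C_1\cup C_2$ contains at most $2\delta n^{|V(H)|}$ ordered copies of $H$, then it must omit at least $\gamma n^2$ edges of $K_n$ for some $\gamma=\gamma(\delta,H)>0$, whence $\Pr[G\subseteq C_1\cup C_2]\leq (1-p)^{\gamma n^2}\leq \exp(-\gamma pn^2/2)$. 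A union bound over the $|\cC|^2=\exp(O(n^{2-1/m_2(H)}))$ ordered pairs of containers, combined with $pn^2\geq cn^{2-1/m_2(H)}$, gives $o(1)$ once $c$ is chosen large enough.

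The main obstacle is establishing the ordered supersaturation step: if $C\subseteq E(K_n)$ contains fewer than $\delta n^{|V(H)|}$ ordered copies of $H$, then $|C|\leq (1-\gamma)\binom{n}{2}$. When $m_2(H)>1$ this can be deduced from the graph removal lemma applied to $\uo H$ together with the fact that each unordered copy accounts for at most $|V(H)|!$ ordered copies, so ``few ordered copies'' implies ``few unordered copies'', forcing a linear number of edges to be deleted. The case $m_2(H)=1$, which corresponds to $H$ being a forest with a $P_3$-component but no cycle, needs more care, since removal for forests is weaker; here I would argue directly by counting, exploiting that $K_n$ itself carries $\Theta(n^{|V(H)|})$ ordered copies of $H$ (by the Erd\H{o}s--Ko--Rado--type lower bound for ordered embeddings into $[n]$) and that removing fewer than $\gamma n^2$ edges destroys only a bounded fraction of them. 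Verifying this for all ordered graphs $H$ with $m_2(H)=1$ except partial matchings is the essential point where the ordered argument departs from the unordered one of \cite{NS16}.
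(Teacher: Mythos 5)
Your overall strategy---containers on the hypergraph of ordered copies of $H$ inside $E(K_n)$, codegree bounds inherited from the unordered setting, and a union bound over pairs of containers---is the same as the paper's, and those parts go through. The genuine gap is in your supersaturation step. The container lemma guarantees that \emph{each} container $C_i$ individually spans at most $\epsilon n^{|V(H)|}$ ordered copies of $H$; it does not bound the number of copies in $C_1\cup C_2$, because a copy of $H$ using edges from both $C_1\setminus C_2$ and $C_2\setminus C_1$ is counted by neither bound. So the hypothesis of your supersaturation lemma (``$C_1\cup C_2$ contains at most $2\delta n^{|V(H)|}$ ordered copies'') is simply not available. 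What you actually need is the \emph{colored} statement: if $E_1$ and $E_2$ each induce few ordered copies of $H$, then $E_1\cup E_2$ omits $\Omega(n^2)$ edges of $K_n$. Neither the removal lemma (which in any case points the wrong way: it lets you delete few edges to destroy all copies, and says nothing about how many edges the graph is missing from $K_n$) nor an uncolored count of copies in the union can deliver this, since a very dense two-colored graph could a priori have all of its copies of $H$ non-monochromatic. The paper's Lemma~\ref{lem:superRamsey2} supplies the missing Ramsey ingredient: with $\sigma=\orn(H,H,H)$, every $\sigma$-set of vertices carries a copy of $H$ lying entirely in $E_1$, entirely in $E_2$, or entirely in $E(K_n)\setminus(E_1\cup E_2)$; as the first two categories are small by hypothesis, the complement carries $\Omega(n^{|V(H)|})$ copies and hence $\Omega(n^2)$ edges. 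Once this is in place, your case distinction between $m_2(H)>1$ and $m_2(H)=1$ evaporates---the argument is uniform in $H$.

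A secondary omission: if $H$ has isolated vertices, a subset of $E(K_n)$ no longer determines where those vertices sit, so the hypergraph whose hyperedges are edge sets of copies of $H$ does not faithfully encode ordered containment of $H$. The paper first proves the statement for $H$ without isolated vertices and then recovers the general case by running $G(n,p')$ with a slightly larger constant on a sparse sub-interval set $U(n)\subseteq [n]$, so that a monochromatic copy of the edge-support of $H$ can always be extended by the surrounding vertices to an ordered copy of $H$ itself. You would need to add such a step.
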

%
%
%
Based on this theorem we use random graphs to prove the following result.
\begin{theorem}\label{thm:ordDensityRamseyInfinite}
 Let $H$ be an ordered graph.
 If $m(F)>m_2(H)$ for each ordered graph $F\in\ors(H)$, then $H$ is Ramsey infinite.
\end{theorem}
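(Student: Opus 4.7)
The plan is to argue by contradiction, combining Theorem~\ref{thm:ordRandomRamsey} with a standard first moment computation. First observe that if $H$ is a partial matching (with at least two edges, so that $m_2(H)$ is defined), then $H\in\ors(H)$ and $m(H)=\tfrac{1}{2}=m_2(H)$, so the hypothesis of the theorem fails. Hence the statement is vacuous for partial matchings, and I may assume $H$ is not a partial matching, which is precisely what Theorem~\ref{thm:ordRandomRamsey} requires.

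Suppose for contradiction that $H$ is Ramsey finite, and let $F_1,\dots,F_k$ be its minimal ordered Ramsey graphs. Since any member of $\ors(H)$ can be repeatedly shrunk to a minimal element, every $F\in\ors(H)$ contains some $F_i$ as an ordered subgraph. By the hypothesis, $m(F_i)>m_2(H)$ for each $i\in[k]$, so for each $i$ I fix an ordered subgraph $F_i^{\ast}\subseteq F_i$ with $|E(F_i^{\ast})|/|V(F_i^{\ast})|>m_2(H)$. Since $k$ is finite, there is some $\delta>0$ with $|E(F_i^{\ast})|/|V(F_i^{\ast})|\geq m_2(H)+\delta$ for every $i$.

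Let $c$ be the constant from Theorem~\ref{thm:ordRandomRamsey} and set $p=c\,n^{-1/m_2(H)}$. Viewed as an ordered graph on vertex set $[n]$, the random graph $G(n,p)$ satisfies $G(n,p)\in\ors(H)$ with probability tending to $1$ by Theorem~\ref{thm:ordRandomRamsey}, so with probability tending to $1$ it contains some $F_i$ and therefore some $F_i^{\ast}$ as an ordered subgraph. On the other hand, an ordered embedding of $F_i^{\ast}$ into $G(n,p)$ is determined by choosing $|V(F_i^{\ast})|$ vertices of $[n]$ (whose relative order is then forced) and requiring the appropriate edges to be present, so the expected number of ordered copies of $F_i^{\ast}$ in $G(n,p)$ is at most
\[
    \binom{n}{|V(F_i^{\ast})|}\, p^{|E(F_i^{\ast})|} \;=\; O\bigl(n^{|V(F_i^{\ast})| - |E(F_i^{\ast})|/m_2(H)}\bigr),
\]
whose exponent is at most $-\delta\,|V(F_i^{\ast})|/m_2(H)<0$. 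Markov's inequality combined with a union bound over $i\in[k]$ then yields that with probability tending to $1$, $G(n,p)$ contains none of the $F_i^{\ast}$, contradicting the previous sentence.

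The main technical step is really Theorem~\ref{thm:ordRandomRamsey}; once the $1$-statement is available, the remainder is a routine threshold computation showing that subgraphs of density exceeding $m_2(H)$ almost surely do not appear in $G(n,p)$ at the chosen $p$. The only feature specific to the ordered setting is the counting step, where only order-preserving embeddings are counted, contributing a factor of $\binom{n}{v}$ rather than $n!/(n-v)!$; this changes only the constant in the $O$-estimate and leaves the argument intact.
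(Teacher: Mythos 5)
Your proof is correct and follows essentially the same route as the paper: both rest on Theorem~\ref{thm:ordRandomRamsey} to get $G(n,p)\in\ors(H)$ at $p=cn^{-1/m_2(H)}$, combined with a first-moment argument showing that subgraphs of density exceeding $m_2(H)$ almost surely do not appear at that density (the paper cites the Erd\H{o}s--R\'enyi/Bollob\'as small-subgraph theorem where you compute the expectation directly). The only cosmetic difference is that you phrase it as a contradiction with a finite list of minimal Ramsey graphs, whereas the paper argues directly that for every $t$ the random graph whp contains no $t$-vertex Ramsey graph of $H$ and hence contains a minimal one on more than $t$ vertices.
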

Essentially this theorem yields that every Ramsey finite ordered graph $H$ has a pseudoforest as a Ramsey graph.
Indeed by Theorems~\ref{thm:avgDegRamsey} and~\ref{thm:ordDensityRamseyInfinite} we have that $H$ is a forest and there is some $F\in\ors(H)$ with $m(F)\leq m_2(H)\leq 1$, that is, $F$ is a pseudoforest.
Using our results on sparse ordered Ramsey graphs from the first part of this article we obtain the following.
\begin{theorem}\label{thm:randomApplied}
If $H$ is a Ramsey finite ordered graph, then each component of $H$ which is not a monotone $P_3$ is a right star or each such component is a left star.
\end{theorem}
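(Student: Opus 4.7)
The plan is to combine Theorem~\ref{thm:ordDensityRamseyInfinite} with the structural Corollary~\ref{cor:orderedDens2Dens}, and then to eliminate the remaining mixed-orientation case via an explicit two-colouring built from a natural orientation of a pseudoforest.

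If $H$ is a matching then every component is $K_2$, which is a right star, and the conclusion is immediate. Hereafter I assume $H$ is not a matching. Theorem~\ref{thm:ordDensityRamseyInfinite} applied to the Ramsey finite graph $H$ yields some $F\in\ors(H)$ with $m(F)\le m_2(H)$. By Observation~\ref{obs:basicRelOrderedRamsey} we have $\uo{F}\in\rs(\uo{H})$, and Theorem~\ref{thm:avgDegRamsey} forces $m_2(H)\le 1$, since otherwise $m(F)=m(\uo{F})>m_2(\uo{H})=m_2(H)$. Consequently $H$ is a forest and $F$ is a pseudoforest. Because any two-colouring of $F$ produces a monochromatic $H$ and hence a monochromatic copy of each component $C$ of $H$, we have $F\oto C$ for every component $C$. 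When $C$ has at least two edges, $m_2(C)=1\ge m(F)$, so Corollary~\ref{cor:orderedDens2Dens} forces $C$ to be a left star, a right star, or a monotone $P_3$. Single-edge components are copies of $K_2$ and so are simultaneously left and right stars.

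The main remaining task is to rule out that $H$ has both a right star $\vec{S}_a$ and a left star with $b$ leaves among its components with $a,b\ge 2$. Assuming such components exist, I exhibit a two-colouring of $F$ with no monochromatic copy of $H$, contradicting $F\oto H$. Since $F$ is a pseudoforest, its edges admit an orientation with out-degree at most one at every vertex: orient each tree component toward a chosen root, and each unicyclic component by directing its unique cycle cyclically and orienting each attached pendant subtree toward the cycle. Colour an edge $\{u,v\}$ with $u<v$ red if its orientation is $u\to v$ and blue otherwise. Then at any vertex $w$, any red edge of $w$ going to the right is necessarily the outgoing edge of $w$, and any blue edge of $w$ going to the left is also the outgoing edge of $w$. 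Since $w$ has at most one outgoing edge, $w$ has at most one red edge to its right and at most one blue edge to its left. In particular, no vertex is the centre of a red $\vec{S}_a$ or of a blue left-$b$-star, so neither colour class can contain a copy of $H$, a contradiction.

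The principal obstacle I anticipate is devising the right two-colouring in the last step; once the pseudoforest orientation is in hand everything else is straightforward. Combining the steps, every component of $H$ is a left star, a right star, or a monotone $P_3$, and all star components with at least two leaves share a common orientation, which is exactly the statement of Theorem~\ref{thm:randomApplied}.
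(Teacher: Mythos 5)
Your proof is correct. The opening reduction is exactly the paper's: Theorem~\ref{thm:ordDensityRamseyInfinite} combined with Theorem~\ref{thm:avgDegRamsey} (via Observation~\ref{obs:basicRelOrderedRamsey}) produces a pseudoforest $F\in\ors(H)$ and forces $H$ to be a forest. You then diverge in how the structure of $H$ is extracted. The paper cites Lemma~\ref{lem:noRamseyPseudoforest} parts~\ref{enum:sameOrientaionPseudo}, \ref{enum:P4} and~\ref{enum:threeStar} directly on $H$ as a whole (no ordered $P_4$, hence a star forest; every three-edge star oriented; orientations consistent), whereas you apply Corollary~\ref{cor:orderedDens2Dens} componentwise --- legitimate, since $F\oto C$ and $m(F)\leq 1=m_2(C)$ for every component $C$ with at least two edges --- and then dispose of the one remaining obstruction (a right star and a left star, each with at least two leaves, coexisting as components) by an explicit colouring. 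That colouring, read off from an out-degree-at-most-one orientation of the pseudoforest, is a clean repackaging of the paper's inductive proof of Lemma~\ref{lem:noRamseyPseudoforest}\ref{enum:sameOrientaionPseudo} and arguably tidier, since it avoids the separate treatment of leaves and of even/odd cycles. Note that your route is not logically lighter overall: Corollary~\ref{cor:orderedDens2Dens} rests on the full strength of Theorems~\ref{thm:OrderedRamseyForest} and~\ref{thm:OrderedRamseyPseudoforest}, which in turn rest on the same Lemma~\ref{lem:noRamseyPseudoforest} the paper quotes directly. But as a modular derivation from results already stated in the paper it is perfectly valid; the only loose end, shared with the paper's own proof, is the degenerate treatment of isolated-vertex components.
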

%
%
A full characterization of Ramsey finite ordered graphs which are connected is given in Corollary~\ref{cor:connectedFinite} below.
As in the unordered setting, we believe that Theorems~\ref{thm:ordRandomRamsey} and~\ref{thm:ordDensityRamseyInfinite} generalize to the asymmetric case, see the discussion in Section~\ref{sec:conclusion}.
Now we turn to pairs of ordered graphs where one is a forest and the other contains a cycle.
In the unordered setting such a pair is Ramsey finite if and only if the forest is a matching~\cite{B78, Lu94} (see Theorem~\ref{thm:unorderedForestCycle} above).
The following theorem gives a partial result for ordered graphs, similar to Theorem~\ref{thm:unorderedForestCycle}\ref{enum:unordredMatchingFinite}.
Recall that $I\subseteq V(G)$ is an interval of an ordered graph $G$ if for any $u$, $v\in I$ and $z\in V(G)$ with $u\leq z\leq v$ we have $z\in I$.
 An ordered graph $G$ with at least two vertices is \emph{loosely connected} if for any any partition $V_1\dot\cup V_2=V(G)$ of the vertices of $G$ into two disjoint intervals $V_1$ and $V_2$ there is an edge with one endpoint in $V_1$ and the other endpoint in $V_2$.
 See Figure~\ref{fig:segConnected}.
\begin{figure}
 \centering
 \includegraphics{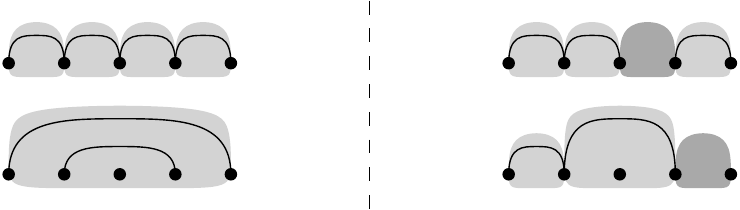}
 \caption{Each ordered graph on the left side is loosely connected while each ordered graph on the right is not loosely connected. We see that a disconnected ordered graph might be loosely connected, and each ordered graph which is not loosely connected contains two vertices next two each other which are not ``spanned'' by any edge (the dark gray parts).}
 \label{fig:segConnected}
\end{figure}
 Further $G\sqcup G'$ denotes the \emph{intervally disjoint union} of ordered graphs $G$ and $G'$, that is, a vertex disjoint union of $G$ and $G'$ where all vertices of $G$ are to the left of all vertices of $G'$.
 Note that each ordered graph $G$ without isolated vertices has a unique representation $G=G_1\sqcup\cdots\sqcup G_t$ where $G_i$ is a loosely connected ordered graph, $1\leq i\leq t$.
\begin{theorem}\label{thm:intervalUnionFinite}
 Let $s$ and $t$ be positive integers and let $H_1,\ldots,H_s$, $H'_1,\ldots,H'_t$ be loosely connected ordered graphs.
 If $(H_i,H'_j)$ is Ramsey finite for all $i\in[s]$, $j\in [t]$, then $(H_1\sqcup\cdots\sqcup H_s,H'_1\sqcup\cdots\sqcup H'_t)$ is Ramsey finite.
\end{theorem}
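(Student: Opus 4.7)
The plan is to induct on $s+t$; the base case $s=t=1$ is exactly the hypothesis. Throughout I would use a single structural fact: since each $H_i$ and each $H'_j$ is loosely connected, every ordered copy of an $H_i$ (or $H'_j$) inside an ordered graph $F$ lies entirely within one piece of the canonical decomposition $F=F_1\sqcup\cdots\sqcup F_k$ into loosely connected pieces. Indeed, a copy straddling the cut between two pieces would partition its own vertex set into two non-empty intervals without an $F$-edge between them, contradicting loose connectedness.

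For the inductive step, take any minimal $F\in\ors(H,H')$ and write $F=F_1\sqcup\cdots\sqcup F_k$ (a minimal Ramsey graph has no isolated vertices). By the structural fact, a red copy of $H=H_1\sqcup\cdots\sqcup H_s$ in $F$ corresponds to choosing indices $\ell(1)\leq\cdots\leq\ell(s)$ and embedding each $H_i$ into $F_{\ell(i)}$, and symmetrically for blue copies of $H'$. Assign to each piece $F_\ell$ the pair of intervals $(I_\ell,J_\ell)\subseteq[s]\times[t]$ for which $F_\ell\oto(H_{I_\ell},H'_{J_\ell})$ is demanded by the Ramsey property of $F$, writing $H_I:=\sqcup_{i\in I}H_i$ and analogously $H'_J$; the minimality of $F$ forces $F_\ell$ to be a minimal Ramsey graph of this sub-pair. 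Whenever $(I_\ell,J_\ell)\neq([s],[t])$ one has $|I_\ell|+|J_\ell|<s+t$, and the inductive hypothesis bounds $|V(F_\ell)|$ uniformly.

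The hard part is twofold. First, bounding the number $k$ of pieces: since roles take values in a fixed finite set and the ordering of pieces constrains which roles can be realised simultaneously by a monochromatic target, a pigeonhole argument should show that for $k$ larger than a threshold depending only on $s,t$ some piece is removable without destroying $F\oto(H,H')$, contradicting minimality. Second, the case $k=1$ with $(I_1,J_1)=([s],[t])$---i.e.\ $F$ itself loosely connected---must be handled directly, since the inductive hypothesis does not apply to the original pair. This case genuinely occurs (for instance $K_6$ is a loosely connected minimal ordered Ramsey graph of $(2K_2,2K_2)$ with monotone matchings), so one must bound $|V(F)|$ via a critical-colouring analysis: each edge has a critical colouring witnessing its necessity, and the interval structure of $H$ and $H'$ together with loose connectedness of $F$ restricts how many edges can serve non-redundant critical roles. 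Combining the two bounds yields a uniform size bound on minimal Ramsey graphs and hence finitely many of them up to isomorphism.
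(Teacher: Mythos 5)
Your opening structural fact is correct, and it is indeed why loose connectedness appears in the hypothesis: a copy of a loosely connected $H_i$ cannot straddle a cut of $F$ into two intervals with no crossing edge. But the core of your inductive step does not go through. You assert that the Ramsey property of $F$ ``demands'' of each piece $F_\ell$ that $F_\ell\oto(H_{I_\ell},H'_{J_\ell})$ for a single pair of index intervals, and that minimality of $F$ then forces $F_\ell$ to be a \emph{minimal} Ramsey graph of that sub-pair. Neither claim is justified, and the second one fails for the natural candidates. The Ramsey property of $F$ quantifies over all colorings at once; it does not attach a fixed sub-pair to a fixed piece, since different colorings may use the same piece to host different $H_i$ or $H'_j$, or none. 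Concretely, in the paper's characterization a minimal $F$ is a (not necessarily disjoint) union of graphs $F_i^j$, with $F_i^j$ a minimal Ramsey graph of $(H_i,H'_j)$, subject only to the grid order $F_i^j\prec F_i^{j+1}$ and $F_i^j\prec F_{i+1}^j$; the graphs $F_1^2$ and $F_2^1$, say, may interleave and then lie in a common loosely connected piece, which contains a minimal Ramsey graph of $(H_1,H'_2)$ and one of $(H_2,H'_1)$ but is itself minimal for neither and need not be Ramsey for any larger sub-pair. So the induction hypothesis cannot be applied to the pieces in the way you propose.

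Second, the two steps you flag as ``the hard part'' are exactly where the proof has to happen, and you do not supply them. To show that a graph is \emph{not} a Ramsey graph one must exhibit a coloring, and your proposal contains no mechanism for producing one. The paper does this by induction on $s$ and $t$: it fixes extremal (leftmost/rightmost) copies of the relevant sub-configurations, splits $F$ at a vertex $p$ into $F_\ell$ and $F_r$, colors all edges incident to $p$ or joining the two sides with a single color, and uses loose connectedness of the $H_i$, $H'_j$ to see that no monochromatic copy can cross the split. Your pigeonhole bound on $k$ presupposes the role assignment criticized above, and your $k=1$ case rests on a false example: $K_6$ is not a minimal ordered Ramsey graph of a pair of monotone matchings $K_2\sqcup K_2$, because it contains three pairwise intervally disjoint edges $e_1\prec e_2\prec e_3$, which already form a Ramsey graph of that pair (indeed the minimal Ramsey graphs here are exactly such three-edge configurations, none of which is loosely connected; more generally no minimal Ramsey graph of $(H,H')$ with $s+t\geq 3$ is loosely connected). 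The workable route, as in the paper, is to write down an explicit finite candidate family $\cF_s^t$ and prove that containing a member of it is equivalent to being a Ramsey graph, rather than to analyze an abstract minimal $F$ piece by piece.
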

A \emph{monotone matching} is an ordered matching of the form $K_2\sqcup\cdots\sqcup K_2$.
Clearly $(H,K_2)$ is Ramsey finite for any ordered graph $H$.
\begin{corollary}\label{cor:monMatching}
 If $H'$ is a monotone matching, then $(H,H')$ is Ramsey finite for each ordered graph~$H$ without isolated vertices.
\end{corollary}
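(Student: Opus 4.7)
The plan is to deduce this directly from Theorem~\ref{thm:intervalUnionFinite} applied to the natural decompositions of $H$ and $H'$. Since $H$ has no isolated vertices, the excerpt records that it admits a unique representation $H=H_1\sqcup\cdots\sqcup H_s$ into loosely connected pieces. Since $H'$ is a monotone matching, by definition $H'=K_2\sqcup\cdots\sqcup K_2$, and each factor $K_2$ is trivially loosely connected (it has only two vertices, which form the only nontrivial interval partition, and they are joined by an edge). So both sides already have the shape required by the hypothesis of Theorem~\ref{thm:intervalUnionFinite}.

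The only thing left to verify is that each pair $(H_i,K_2)$ is Ramsey finite. This is immediate: for any ordered graph $F$ and any $2$-coloring of its edges, a blue edge already yields a blue copy of $K_2$, so $F\oto(H_i,K_2)$ holds if and only if the all-red coloring produces a red copy of $H_i$, i.e.\ if and only if $F$ contains $H_i$ as an ordered subgraph. Hence the unique minimal element of $\ors(H_i,K_2)$ is $H_i$ itself, and in particular $(H_i,K_2)$ is Ramsey finite.

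Having checked the hypotheses for every pair $(H_i,H'_j)=(H_i,K_2)$, Theorem~\ref{thm:intervalUnionFinite} gives that $(H_1\sqcup\cdots\sqcup H_s,\,K_2\sqcup\cdots\sqcup K_2)=(H,H')$ is Ramsey finite, which is the claim. There is no real obstacle here beyond checking that the trivial pair $(H_i,K_2)$ fits the framework; the substance of the corollary lies entirely in Theorem~\ref{thm:intervalUnionFinite}.
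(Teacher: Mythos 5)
Your proof is correct and is exactly the argument the paper intends: decompose $H$ into its loosely connected pieces, observe that each $(H_i,K_2)$ has $H_i$ as its unique minimal ordered Ramsey graph, and invoke Theorem~\ref{thm:intervalUnionFinite}. Nothing to add.
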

Finally we consider pairs of ordered forests.
A large part of the full characterization in the unordered setting (see Theorem~\ref{thm:Faudree}) is due to Ne\v{s}et\v{r}il and R\"odl~\cite{NR78} who prove that each pair of (unordered) forests which are not star forests is Ramsey infinite.
Their proof is based on the fact that each pair of (unordered) forests has Ramsey graphs of arbitrarily large girth.
This in turn relies on the fact that for each (unordered) forest $H$ there is an integer $k$ such that each graph of chromatic number at least $k$ contains a copy of $H$.
This second fact is not true for ordered forests~\cite{ForbiddenOrderedSubgraphs}.
We think though that the first fact holds for ordered graphs as well.
\begin{conjecture}\label{conj:orderedRamseyHighGirth}
 For each integer $t$ and any pair $(H,H')$ of ordered forests there is $F\in\ors(H,H')$ with $\girth(F)\geq t$.
\end{conjecture}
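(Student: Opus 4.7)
The plan is to adapt the partite construction of Ne{\v{s}}et{\v{r}}il and R\"odl to the ordered setting. First I would reduce to the diagonal case by setting $H^{\star}=H\sqcup H'$. A monochromatic copy of $H^{\star}$ inside a coloured $F$ contains monochromatic copies of both $H$ and $H'$ in the same colour, so $F\oto H^{\star}$ implies $F\oto(H,H')$, and $H^{\star}$ is again an ordered forest. It therefore suffices to prove that for every ordered forest $H$ and every integer $t$ there is some $F\in\ors(H)$ with $\girth(F)\geq t$. I would argue by induction on $|E(H)|$, the cases $|E(H)|\leq 1$ being immediate with $F=H$.

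For the inductive step pick a leaf $v$ of $H$ with unique neighbour $w$, set $H_0=H-v$, and apply induction to get $F_0\in\ors(H_0)$ with $\girth(F_0)\geq t_0$, where $t_0$ will be chosen much larger than $t$. I then build $F$ by a partite amalgamation with $|V(H)|$ ``columns'', each column an interval of the linear order. The columns corresponding to vertices of $H_0$ are populated by many vertex-disjoint copies of $F_0$, laid out so that the orderings within columns match the ordering of $H_0$; the column corresponding to $v$ is an independent set $W$ of fresh slot vertices. The edges between the $w$-column and $W$ are placed according to a bipartite graph $B$ of high girth and suitable density. Given any $2$-colouring of $E(F)$, each copy of $F_0$ yields a monochromatic copy of $H_0$ with some distinguished vertex $w_i$ playing the role of $w$; an ordered-Ramsey or pigeonhole argument on the colours of the edges of $B$ incident to the $w_i$ then produces a same-coloured extending edge into $W$, completing a monochromatic copy of $H$. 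Short cycles in $F$ come either from a single $F_0$-copy (length $\geq t_0$) or traverse $B$ (length at least roughly $\girth(B)$), so choosing $t_0\geq 2t$ and $\girth(B)\geq t$ yields $\girth(F)\geq t$.

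The main obstacle lies in controlling the bipartite incidence graph $B$: it must be dense enough that some colour class forces a suitable extending edge into $W$, yet sparse enough to have girth at least $t$. Since ordered Ramsey numbers of forests grow only polynomially in the size of the target, $B$ needs density roughly $N^{-1+\varepsilon}$ on $N$ vertices, exactly the classical regime for the probabilistic construction of high-girth sparse graphs via alteration. Making this alteration robust to a $2$-colouring -- that is, showing that short-cycle deletion preserves the relevant ordered Ramsey property of $B$ -- is the core technical step, and I would expect it to rely on an ordered supersaturation version of Theorem~\ref{thm:ordRandomRamsey}, or, failing that, on iterating the same partite construction one further level deeper so that $B$ itself is obtained as an ordered Ramsey graph with built-in high girth.
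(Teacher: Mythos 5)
This statement is Conjecture~\ref{conj:orderedRamseyHighGirth}, which the paper leaves open; the paper only establishes the special case of $\chi$-unavoidable ordered forests (via Theorem~\ref{thm:unavoidNoRamseyForest}, taking an ordered graph of large chromatic number and large girth), and explicitly notes that this route breaks down for ordered forests that are not $\chi$-unavoidable. Your reduction to the diagonal case via $H\sqcup H'$ is correct, and induction on leaves is a reasonable frame, but what you have written is a plan rather than a proof: by your own admission the ``core technical step'' --- making the bipartite layer $B$ simultaneously of girth at least $t$ and rich enough to force a correctly coloured, correctly placed extension edge --- is not carried out, and that step is precisely where the difficulty of the conjecture lives.

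Two concrete gaps. First, your amalgamation needs the monochromatic copy of $H_0$ arising inside a copy of $F_0$ to be \emph{transversal} to the column structure, with each vertex in the column assigned to the corresponding vertex of $H_0$; otherwise the slot column $W$ need not occupy the correct position in the linear order relative to that copy, and an extension edge to $W$ does not produce an ordered copy of $H$. The hypothesis $F_0\in\ors(H_0)$ only gives a monochromatic copy of $H_0$ \emph{somewhere}; upgrading this to a positioned (partite) copy is an ordered partite lemma that you neither state nor prove, and combining it with girth control is essentially the open problem restated. Second, your density heuristic for $B$ rests on the claim that ordered Ramsey numbers of forests grow polynomially in the size of the target. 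This is false: the paper itself cites the existence of ordered matchings with superpolynomial ordered Ramsey numbers, so the alteration argument for a high-girth $B$ in the density regime you describe is not justified. As written, the argument does not close the conjecture.
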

If Conjecture~\ref{conj:orderedRamseyHighGirth} is true, then each pair of ordered forests where $\ors(H,H')$ does not contain a forest has minimal Ramsey graphs of arbitrarily large (but finite) girth, and hence of arbitrarily large order.
\begin{observation}\label{obs:orderedInfHighGirth}
 Let $(H,H')$ be a pair of ordered forests such that $\ord(H,H')\geq 1$ and for each integer $t$ there is $F\in\ors(H,H')$ with $\girth(F)\geq t$.
 Then $(H,H')$ is Ramsey infinite.
\end{observation}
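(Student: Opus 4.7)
The plan is to show that for every sufficiently large integer $t$ one can produce a \emph{minimal} element of $R_<(H,H')$ whose girth is at least $t$ and still finite; these will then be pairwise non-isomorphic (their vertex counts are unbounded), yielding Ramsey infiniteness.

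First, I would fix an arbitrary integer $t\geq 3$ and use the hypothesis to choose some $F_t\in R_<(H,H')$ with $\girth(F_t)\geq t$. Since every element of $R_<(H,H')$ contains a minimal one as a subgraph (remove edges one by one until no further removal preserves the arrow relation), I pick a minimal ordered Ramsey graph $F'_t\subseteq F_t$ with $F'_t\in R_<(H,H')$.

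The key step is to argue that $F'_t$ actually contains a cycle, so that its girth is finite. This is exactly where the hypothesis $\ord(H,H')\geq 1$ enters: by definition of $\ord$, every $F\in R_<(H,H')$ satisfies $m(F)\geq 1$, in particular $m(F'_t)\geq 1$. Since a forest has density strictly less than $1$, $F'_t$ cannot be a forest and hence contains at least one cycle. Any cycle in $F'_t$ is also a cycle in $F_t$, so $\girth(F'_t)\geq \girth(F_t)\geq t$, and the girth is a well-defined integer.

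Finally, I would wrap up: for every $t\geq 3$ we have produced a minimal $F'_t\in R_<(H,H')$ with $t\leq \girth(F'_t)<\infty$. In particular $|V(F'_t)|\geq \girth(F'_t)\geq t$, so as $t\to\infty$ the orders of the $F'_t$ grow without bound, giving infinitely many pairwise non-isomorphic minimal ordered Ramsey graphs of $(H,H')$. I do not expect any genuine obstacle here; the only point that requires care is the observation that $\ord(H,H')\geq 1$ forces every element of $R_<(H,H')$ (minimal or not) to contain a cycle, which is needed to convert ``large girth'' into ``large order.''
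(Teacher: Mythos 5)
Your argument is correct and is essentially the paper's own: pass from a large-girth Ramsey graph to a minimal one inside it, use $\ord(H,H')\geq 1$ to rule out forests so the minimal graph contains a cycle of length at least $t$, and conclude that the orders of these minimal Ramsey graphs are unbounded. This is the same reasoning the paper uses in the discussion following Conjecture~\ref{conj:orderedRamseyHighGirth} and in the proof of Theorem~\ref{thm:unavoidNoRamseyForest}.
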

Here we focus on pairs of ordered forests $H$ which satisfy the second fact mentioned above, that is, there is an integer $k$ such that each graph of chromatic number at least $k$ contains a copy of~$H$.
We call such an ordered forest \emph{$\chi$-unavoidable}.
Ordered forest which are not $\chi$-unavoidable are discovered in~\cite{ForbiddenOrderedSubgraphs} and for some small such forests we show that they are Ramsey infinite in~\cite{RollinDiss}.
The proof from~\cite{NR78} can be easily adopted for $\chi$-unavoidable ordered forests.
So Conjecture~\ref{conj:orderedRamseyHighGirth} holds for $\chi$-unavoidable ordered forests and we have the following theorem.
\begin{theorem}\label{thm:unavoidNoRamseyForest}
 If $H$ and $H'$ are $\chi$-unavoidable ordered graphs and  $\ord(H,H')\geq 1$, then $(H,H')$ is Ramsey infinite.
\end{theorem}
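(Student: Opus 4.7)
The plan is to follow the classical Ne\v{s}et\v{r}il--R\"odl strategy: I would show that $\ors(H,H')$ contains ordered graphs of arbitrarily large girth, and then invoke Observation~\ref{obs:orderedInfHighGirth}.

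A preliminary remark is that every $\chi$-unavoidable ordered graph is a forest. Indeed, if an ordered graph contains a cycle $C$, then by Erd\H{o}s's classical theorem there exist (unordered, hence under any ordering also ordered) graphs of arbitrarily large chromatic number whose girth exceeds $|V(C)|$; such graphs contain no copy of $C$ at all and therefore no ordered copy of $H$. Hence $H$ is not $\chi$-unavoidable. In particular, under the hypothesis of the theorem both $H$ and $H'$ are ordered forests, so Observation~\ref{obs:orderedInfHighGirth} will be applicable once the girth statement is in hand.

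For the main step, let $k_H$ and $k_{H'}$ be constants witnessing $\chi$-unavoidability of $H$ and $H'$. Given $t$, I would invoke Erd\H{o}s's theorem to obtain a graph $F$ with $\chi(F)>k_H k_{H'}$ and $\girth(F)>t$, and equip it with an arbitrary linear ordering of its vertex set (this affects neither girth nor chromatic number). For any red/blue edge-coloring of $F$ with red and blue ordered subgraphs $F_R$, $F_B$, a proper $\chi(F_R)$-coloring and a proper $\chi(F_B)$-coloring of $V(F)$ combine coordinate-wise into a proper coloring of $F$, so $\chi(F)\leq \chi(F_R)\chi(F_B)$. Thus either $\chi(F_R)>k_H$ or $\chi(F_B)>k_{H'}$, and by $\chi$-unavoidability $F_R$ contains an ordered copy of $H$ or $F_B$ contains an ordered copy of $H'$. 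Hence $F\oto(H,H')$ while $\girth(F)>t$.

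Combining these, the pair $(H,H')$ of ordered forests satisfies $\ord(H,H')\geq 1$ by hypothesis and admits ordered Ramsey graphs of arbitrarily large girth by the construction above, so Observation~\ref{obs:orderedInfHighGirth} yields that $(H,H')$ is Ramsey infinite. The only real subtlety — and thus the main point requiring care — is checking that $\chi$-unavoidability is robust enough to apply to the ordered subgraphs $F_R$ and $F_B$ after we have ordered the Erd\H{o}s graph arbitrarily; but this is automatic from the definition, since $\chi$-unavoidability asserts that \emph{every} ordered graph of sufficiently high chromatic number contains the ordered forest in question, independent of which ordering is used.
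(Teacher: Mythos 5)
Your proposal is correct and follows essentially the same route as the paper: take an Erd\H{o}s-type ordered graph of large girth and chromatic number at least the product of the two unavoidability constants, use the product bound on the chromatic numbers of the color classes to find a monochromatic copy, and conclude via the large-girth/no-forest argument (which the paper carries out directly rather than citing Observation~\ref{obs:orderedInfHighGirth}, but the content is identical). Your preliminary check that $\chi$-unavoidable ordered graphs are forests is a harmless addition that the paper leaves implicit in its definition of $\chi$-unavoidability.
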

 This theorem leaves to consider pairs of $\chi$-unavoidable ordered graphs with  $\ord(H,H')< 1$.
 We address such pairs of connected ordered graphs next and defer the study of such disconnected forests to future work.
 Recall that a right caterpillar is an ordered tree with segments being right stars with at least one edge each.
 Further if $S_i\preceq\cdots\preceq S_1$ are the segments of a right caterpillar $H$, then the \emph{defining sequence} of $H$ is $|E(S_1)|,\ldots,|E(S_i)|$.
 A left or right caterpillar with defining sequence $d_1,\ldots,d_i$ is called \emph{almost increasing} if $i\leq 2$ or ($i\geq 3$, $d_1\leq d_3$, and $d_2\leq\cdots\leq d_i$).
 See Figure~\ref{fig:incrCaterpillar}.
\begin{figure}
 \centering
 \includegraphics{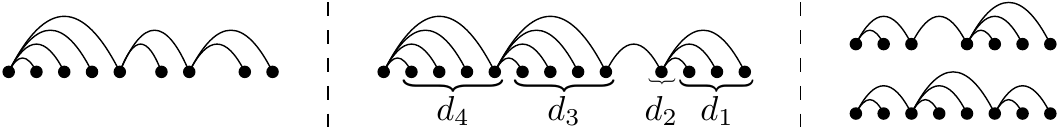}
 \caption{Two almost increasing right caterpillars (left, middle) and two not almost increasing right caterpillars (right).}
 \label{fig:incrCaterpillar}
\end{figure}
\begin{theorem}\label{thm:UnavoidInfinite}
 Let $(H,H')$ be  a Ramsey finite pair of $\chi$-unavoidable connected ordered graphs with at least two edges.
 Then $(H,H')$ is a pair of a right star and an almost increasing right caterpillar or a pair of a left star and an almost increasing left caterpillar.
\end{theorem}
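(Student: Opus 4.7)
The plan is to combine the two main structural results already established in the paper and then to settle the remaining cases by a direct analysis. Since $H$ and $H'$ are both $\chi$-unavoidable and $(H,H')$ is Ramsey finite, the contrapositive of Theorem~\ref{thm:unavoidNoRamseyForest} yields $\ord(H,H')<1$, and Theorem~\ref{thm:OrderedRamseyForest} then places $(H,H')$ into one of the four families~\ref{enum:matching} through~\ref{enum:RightLeftStars}. A connected partial matching has at most one edge, so the assumption that each of $H, H'$ has at least two edges rules out family~\ref{enum:matching}. In family~\ref{enum:RightLeftStars} one of $H,H'$ is a (connected) right or left star and the other is a monotone path; viewed as a caterpillar in either direction, a monotone path has defining sequence $(1,\ldots,1)$, which is trivially almost increasing, so this sub-case already conforms to the desired conclusion.

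It remains to address families~\ref{enum:rightForests} and~\ref{enum:leftForests}, which are mirror images, so I focus on~\ref{enum:rightForests}. After possibly swapping $H$ and $H'$, we have $H=\vec{S}_k$ a (connected) right star and $H'$ a connected ordered graph in which every vertex has at most one neighbor to the left. Since $H'$ is connected on $n$ vertices and has at most $n-1$ edges, it is an ordered tree with exactly $n-1$ edges in which every non-leftmost vertex has a unique left-parent. The remaining task is to show that Ramsey finiteness of $(\vec{S}_k, H')$ forces $H'$ to be an almost increasing right caterpillar.

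This last step is the main obstacle. I plan to argue by contrapositive: whenever $H'$ fails to be an almost increasing right caterpillar, I would construct an infinite family of minimal ordered Ramsey graphs of $(\vec{S}_k, H')$, contradicting Ramsey finiteness. Two defect types must be handled, namely (i) $H'$ is not a right caterpillar at all, so that the descendants of some vertex in the left-parent tree fail to form an interval, and (ii) $H'$ is a right caterpillar whose defining sequence $(d_1,\dots,d_i)$ violates the almost-increasing condition, i.e.\ $d_1>d_3$ or $d_j>d_{j+1}$ for some $j\ge 2$. The natural strategy for each defect is to take a small critical witness for the defect and extend it by iteratively appending carefully chosen gadgets, certifying Ramsey-ness for each extension and producing, for every proper subgraph obtained by a single edge deletion, an explicit two-coloring with no red $\vec{S}_k$ and no blue copy of $H'$. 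The star structure of $\vec{S}_k$ severely constrains the red side (a red $\vec{S}_k$ appears precisely when some vertex has $k$ red right-neighbors), which aids the construction; still, designing the gadgets and uniformly verifying minimality across both defect types is where the combinatorial core of the argument lies.
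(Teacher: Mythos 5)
Your reduction is sound and matches the paper's: Theorem~\ref{thm:unavoidNoRamseyForest} gives $\ord(H,H')<1$, Theorem~\ref{thm:OrderedRamseyForest} plus connectivity and the two-edge hypothesis eliminates case~\ref{enum:matching}, case~\ref{enum:RightLeftStars} is absorbed because a monotone path is an almost increasing caterpillar with defining sequence $1,\ldots,1$, and one is left with a right star $\vec{S}_k$ versus a connected tree $H'$ in which every vertex has at most one neighbor to the left. But everything after that point is a plan rather than a proof, and the plan has two problems. First, your ``defect (i)'' (that $H'$ might fail to be a right caterpillar) should not be attacked by building Ramsey gadgets at all: since $H'$ is $\chi$-unavoidable it contains no bonnet and no tangled path, and the paper's Lemma~\ref{lem:leftDeg1} shows that a $\chi$-unavoidable connected ordered tree whose vertices each have at most one left neighbor is automatically a right caterpillar. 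This case is vacuous, but establishing that requires the structural lemma (resting on the forbidden-substructure characterization of $\chi$-unavoidable forests), which you neither invoke nor prove; without it you are committed to constructing infinitely many minimal Ramsey graphs for a class of targets you cannot describe.

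Second, and more seriously, the entire combinatorial core --- showing that $(\vec{S}_k,H')$ is Ramsey infinite when the defining sequence $d_1,\ldots,d_i$ is not almost increasing --- is left as ``design the gadgets.'' This is where the paper spends nearly all of its effort: it builds left and right determiners for $(H,H_j(d))$ and $(H,H_i^j(d))$ (Lemma~\ref{lem:determiners}), splits the failure of almost-increasingness into the two cases $d_j>\max\{d_{j+1},d_{j+2}\}$ for some $j\leq i-2$ and $d_{j-1}>d_j$ for some $j\geq 3$, and for each case assembles an explicit family $\Gamma_n$ (respectively $F_n$) of Ramsey graphs together with good colorings certifying that deleting any of the $n$ designated edges destroys the Ramsey property, so that the minimal Ramsey graphs inside these families grow without bound. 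None of your proposal indicates how the gadgets enforce Ramsey-ness in one direction while admitting a colorable deletion in the other, nor why the two arithmetic failure modes need genuinely different constructions (they do: one exploits a large segment followed by two smaller ones, the other a decrease deep inside the sequence and requires auxiliary isolated vertices joined by a complete bipartite graph). As it stands the proposal establishes only the easy structural reduction and leaves the theorem's actual content unproved.
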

\begin{theorem}\label{thm:CaterpillarFinite}
 Let $(H,H')$ be a pair of a right star and a right caterpillar or a pair of a left star and a left caterpillar, and let $d_1,\ldots,d_i$ be the defining sequence of the caterpillar.
 If either $i\leq 2$ or $d_1\leq\cdots\leq d_i$, then $(H,H')$ is Ramsey finite.
\end{theorem}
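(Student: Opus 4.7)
The plan is to reduce to the right-handed case $(H,H') = (\vec{S}_k, H')$ by reflecting the vertex order, and then induct on the number of segments $i$ of the caterpillar. Writing $u_0 < u_1 < \cdots < u_i$ for the spine and $d_1,\ldots,d_i$ for the defining sequence, the hypothesis is $i \leq 2$ or $d_1 \leq d_2 \leq \cdots \leq d_i$. For each case I would exhibit a concrete ordered Ramsey graph for $(\vec{S}_k, H')$ and then use the critical-edge characterization of minimality, together with the inductive hypothesis, to uniformly bound the order of every minimal ordered Ramsey graph.

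For the base case $i=1$, the caterpillar $H'$ is a single left star with $d_1$ leaves. I would take as witness Ramsey graph a complete ordered bipartite graph on two intervals $V_L \prec V_R$ of size polynomial in $k$ and $d_1$; its Ramsey property follows from a double-counting argument, namely at most $k-1$ red right-edges per vertex of $V_L$ (to avoid a red $\vec{S}_k$) and at most $d_1-1$ blue left-edges per vertex of $V_R$ (to avoid a blue left star). The bound on minimal Ramsey graphs uses the critical-edge condition: every edge $e=uv$ (with $u<v$) of a minimal $F \in \ors(H,H')$ admits some good colouring of $F-e$ in which $u$ has at least $k-1$ red right-edges besides $e$, and some good colouring in which $v$ has at least $d_1-1$ blue left-edges; a support analysis then confines the non-isolated vertices of $F$ to a bipartite structure of bounded order. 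The $i=2$ case is handled by an analogous three-interval template on $V_0 \prec V_1 \prec V_2$.

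For the inductive step $i \geq 3$ with $d_1 \leq \cdots \leq d_i$, let $H''$ be obtained by deleting the first segment $S_1$ from $H'$, so $H''$ is a right caterpillar with $i-1$ segments and defining sequence $d_2,\ldots,d_i$, still nondecreasing; by induction $(\vec{S}_k, H'')$ is Ramsey finite with a uniform bound $N_{i-1}$ on the order of its minimal Ramsey graphs. A witness Ramsey graph for $(\vec{S}_k, H')$ is built by prepending a fresh block $W$ to the left of a sufficiently robust Ramsey graph $F^*_{i-1}$ of $(\vec{S}_k, H'')$, joining every vertex of $W$ to every vertex of $F^*_{i-1}$. A pigeonhole argument shows that in any colouring avoiding a red $\vec{S}_k$, a large subset $U \subseteq V(F^*_{i-1})$ receives at least $d_1$ blue left-edges from $W$; by robustness of $F^*_{i-1}$, the restricted colouring on $U$ still yields a blue $H''$, whose leftmost spine vertex then grows into a blue $H'$ via its $d_1$ blue edges in $W$. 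The hypothesis $d_1 \leq d_2$ enters here to prevent first-segment leaves from being absorbed into the second segment.

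The main obstacle is not the existence of the Ramsey graph constructed above but the uniform size bound on every minimal Ramsey graph $F \in \ors(\vec{S}_k, H')$ in the inductive step. I would argue that any such minimal $F$ decomposes into a bounded-size first-segment region $W(F)$ near its left end and a tail $F_R$ which is itself an ordered Ramsey graph for $(\vec{S}_k, H'')$, so that the inductive bound yields $|V(F_R)| \leq N_{i-1}$. Making this decomposition rigorous while respecting minimality of $F$ --- in particular showing that $F_R$ really is Ramsey for the shorter caterpillar, without leaking constraints from the first segment --- is the delicate step, and here again the monotonicity $d_1 \leq d_2$ is essential.
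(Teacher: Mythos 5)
There is a genuine gap, and it begins with a misreading of what a right caterpillar is. In this paper a right caterpillar is a concatenation $\vec{S}_{d_i}\con\cdots\con\vec{S}_{d_1}$ of \emph{right} stars, with the defining sequence indexed from the rightmost segment; in particular the single-segment case is $H'=\vec{S}_{d_1}$, a right star, and the base case $i=1$ is immediate ($\vec{S}_{s+d_1-1}$ is the unique minimal ordered Ramsey graph). Your base case instead takes the one-segment caterpillar to be a \emph{left} star and tries to uniformly bound the minimal Ramsey graphs of a pair consisting of a right star and a left star; but for stars with at least two edges each that pair is Ramsey \emph{infinite} (it has no forest or pseudoforest Ramsey graph by Lemma~\ref{lem:noRamseyPseudoforest}, both graphs are $\chi$-unavoidable, so Theorem~\ref{thm:unavoidNoRamseyForest} applies; see also Theorem~\ref{thm:UnavoidInfinite}), so no ``support analysis'' can confine its minimal Ramsey graphs to bounded order. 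The same misunderstanding propagates into your inductive step: to extend a blue copy of the shorter caterpillar you need a \emph{new leftmost vertex} with $d$ blue edges going to the right whose rightmost endpoint is the leftmost vertex of the blue shorter caterpillar, whereas your pigeonhole hands the old leftmost spine vertex $d_1$ blue edges arriving from the left, i.e.\ a blue left star, which is not a segment of a right caterpillar.

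Beyond that, the step that actually carries the theorem is the one you defer. The paper does not decompose a minimal $F$ into a left block plus a Ramsey graph of $(\vec{S}_s,H'')$; that decomposition is neither proved in your proposal nor true of the extremal examples. Instead it defines an explicit finite family $\cF_j$ recursively (an apex of degree $s+d_j-1$ whose $s$ rightmost neighbours are each leftmost in a, possibly overlapping, member of $\cF_{j-1}$), proves every member is Ramsey with the blue caterpillar anchored at its leftmost vertex, and then proves the converse by exhibiting, for any $F$ containing no member of $\cF_i$, a single good colouring: let $h(u)$ be the largest $j$ with $u$ leftmost in a copy of a member of $\cF_j$, and colour $uv$ (with $u<v$) red iff $h(u)\leq h(v)$ and there are $d_{h(u)+1}-1$ vertices between $u$ and $v$. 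The monotonicity $d_1\leq\cdots\leq d_i$ is exactly what makes this one global colouring avoid both a red $\vec{S}_s$ and a blue $H'$ simultaneously; it is not there to ``prevent leaves from being absorbed.'' Without an argument of this kind --- some mechanism forcing every ordered Ramsey graph of the pair to contain one of boundedly many explicit configurations --- the proposal establishes only the existence of Ramsey graphs, not Ramsey finiteness.
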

Unfortunately we do not resolve this case completely, see Conjecture~\ref{conj:unavoidableConnected} and the preceding discussion in Section~\ref{sec:conclusion}.
Nevertheless Theorems~\ref{thm:OrderedRamseyForest}, \ref{thm:randomApplied}, \ref{thm:unavoidNoRamseyForest} and~\ref{thm:CaterpillarFinite} yield the following result.
\begin{corollary}\label{cor:connectedFinite}
 A connected ordered graph is Ramsey finite if and only if it is a left or a right star.
\end{corollary}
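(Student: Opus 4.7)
The plan is to combine the four cited theorems, addressing each direction of the equivalence separately.

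For the ``if'' direction, suppose $H$ is a right star. Then $H$ itself can be viewed as a right caterpillar with a single segment, so the symmetric pair $(H,H)$ falls under the hypothesis of Theorem~\ref{thm:CaterpillarFinite} (with defining-sequence length $i=1\leq 2$), which yields that $(H,H)$ is Ramsey finite. The left-star case is symmetric under reversal of the linear ordering.

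For the ``only if'' direction, suppose $H$ is a connected Ramsey finite ordered graph. Applying Theorem~\ref{thm:randomApplied} to the symmetric pair $(H,H)$ and using that $H$ consists of a single component shows that $H$ is either a monotone $P_3$, a right star, or a left star. It therefore suffices to rule out the case that $H$ is a monotone $P_3$.

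The main obstacle is establishing that the monotone $P_3$ is Ramsey infinite. The plan is to apply Theorem~\ref{thm:unavoidNoRamseyForest} to the pair $(H,H)$ with $H$ a monotone $P_3$, which requires two ingredients. First, $H$ is $\chi$-unavoidable: any ordered graph containing no monotone $P_3$ has every vertex with all of its neighbours on a single side in the ordering, which partitions the vertex set into a ``left-only'' class and a ``right-only'' class with every edge crossing between them, making the graph bipartite; hence every ordered graph of chromatic number at least $3$ contains a monotone $P_3$. Second, $\ord(H,H)\geq 1$: inspecting the four cases of Theorem~\ref{thm:OrderedRamseyForest} shows that none applies to $(H,H)$, since $H$ is not a partial matching (ruling out~(a)), is not a right-star forest nor a left-star forest (ruling out~(b) and~(c)), and case~(d) requires one of the pair to be a forest of left or right stars, which $H$ is not. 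With both ingredients Theorem~\ref{thm:unavoidNoRamseyForest} delivers that the monotone $P_3$ is Ramsey infinite, contradicting the assumption on $H$. This forces $H$ to be a right or left star and completes the proof.
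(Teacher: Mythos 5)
Your proof is correct and assembles exactly the four theorems (\ref{thm:OrderedRamseyForest}, \ref{thm:randomApplied}, \ref{thm:unavoidNoRamseyForest}, \ref{thm:CaterpillarFinite}) that the paper itself cites as yielding the corollary, including the key step of ruling out the monotone $P_3$ via its $\chi$-unavoidability and $\ord(H,H)\geq 1$. This matches the paper's intended argument.
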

A summary of our results is given in Table~\ref{fig:tableOrderedSummary}.
\begin{table}
\centering
\includegraphics{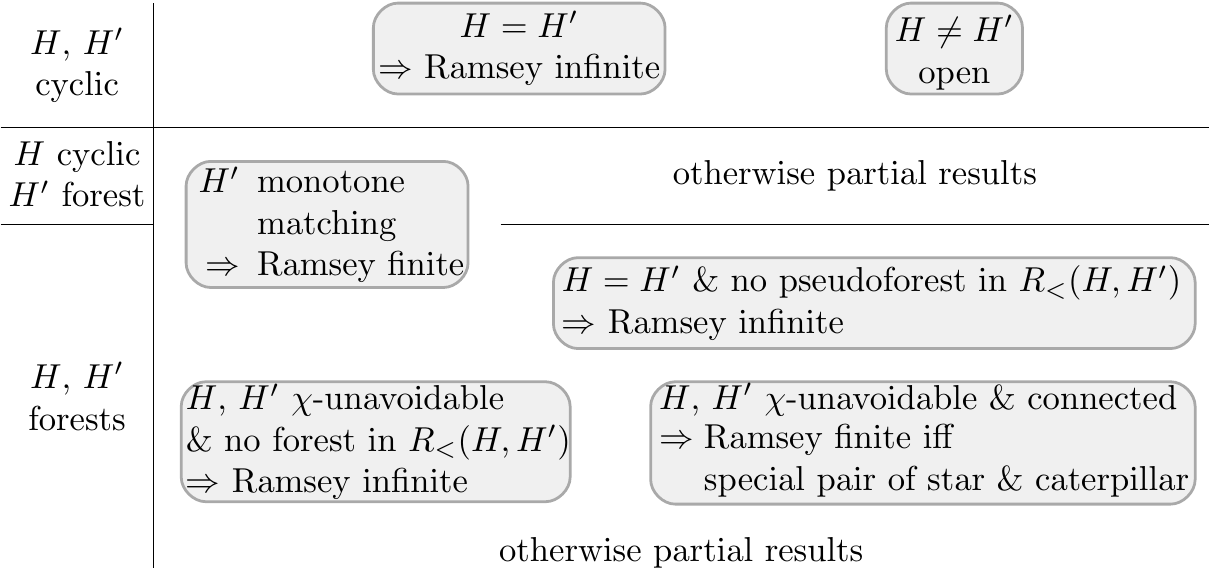}
\caption{Summary of results on Ramsey finiteness of $(H,H')$ for ordered graphs $H$ and $H'$.}
\label{fig:tableOrderedSummary}
\end{table}
%

\paragraph{Ordered Ramsey Numbers.}
Recently, Ramsey numbers were studied for ordered (hyper)graphs.
The \emph{ordered Ramsey number} of some ordered $r$-uniform hypergraph $H$ is the smallest integer $n$ such that for any $2$-coloring of the (hyper)edges of an ordered complete $r$-uniform hypergraph on $n$ vertices there is a copy of $H$ with all edges of the same color.
Due to several applications, mostly geometric Erd\H{o}s-Szekeres type results, Ramsey numbers of so-called monotone (hyper)paths received particular attention~\cite{Mub17}.
For given positive integers $\ell$ and $r$ a \emph{monotone $r$-uniform $\ell$-hyperpath} is an ordered $r$-uniform hypergraph with edges $E_1,\ldots,E_t$, where each edge forms an interval in the vertex ordering and $E_i\cap E_{i+1}$ consists of the $\ell$ rightmost vertices in $E_i$ and the  $\ell$ leftmost vertices in $E_{i+1}$, $i\in[t-1]$.
Building on previous results of Moshkovitz and Shapira~\cite{MS14}, Cox and Stolee~\cite{CS16} prove that the ordered Ramsey number of such paths $P$ grows, as a function in the number of edges of $P$, like a tower of height proportional to the maximum degree of $P$ (note that the maximum degrees of all sufficiently large monotone $r$-uniform $\ell$-hyperpaths coincide for fixed $\ell$ and $r$).
In contrast to this, the Ramsey numbers of unordered hyperpaths, and more general of any hypergraph of bounded maximum degree, are linear in the size.
Indeed for any uniformity $r$ and any positive integer $d$ there is a constant $c(r,d)$ such that for each (unordered) $r$-uniform hypergraph $H$ on $n$ vertices and of maximum degree at most $d$ its Ramsey number is at most $c(r,d)\,n$~\cite{RamseyMaxDegree, SparseHypergraphRamsey}.
In a similarly striking contrast to this result, Conlon~\textit{et al.}~\cite{ConlonFoxLeeSudakov} and independently Balko~\textit{et al.}~\cite{BalkoCibulkaKralKyncl} prove the existence of ordered matchings with superpolynomial Ramsey numbers.
On the other hand Conlon~\textit{et al.}~\cite{ConlonFoxLeeSudakov} present results showing that for dense graphs the ordered Ramsey numbers behave similar to the unordered Ramsey numbers.

\section{Proofs}\label{sec:proofs}


\subsection{Proof of Theorems~\ref{thm:OrderedRamseyForest} and~\ref{thm:OrderedRamseyPseudoforest}}\label{sec:proofRamseyForest}

First we introduce several types of edge-colorings which we shall use to proof that some ordered forest or pseudoforest is not a Ramsey graph for certain pairs of ordered graphs.
%
%
%
%
%
%
The \emph{distance} of an edge $e$ and a vertex $u$ in some graph $F$ is the smallest number of edges in a path that contains $u$ and $e$.
Three vertices $x<y<z$ of an ordered graph form a \emph{bend} if either $z$ is adjacent to $x$ and $y$ or $x$ is adjacent to $y$ and $z$.
An edge-coloring of an ordered graph $F$ is a
\begin{itemize}[wide]
 \item[\emph{star-coloring}] with respect to $u\in V(F)$ if an edge is colored red if its distance to $u$ is odd and blue otherwise,
 
 \item[\emph{bipartite-coloring}] with respect to a partition $A\dot\cup B=V(F)$ if  an edge is colored red if its left endpoint is in $A$ and blue otherwise,
 
 \item[\emph{bend-coloring}] with respect to $u\in V(F)$ if $F$ is a tree and an edge $e$ is colored red if its right endpoint is $u$ or the edge next to $e$ on the (unique) path to $u$ exists and forms a bend with $e$, and $e$ is colored blue otherwise.  See Figure~\ref{fig:ZigZagCol} (left) for examples of such a coloring.
\end{itemize}
   \begin{figure}
    \centering
    \includegraphics{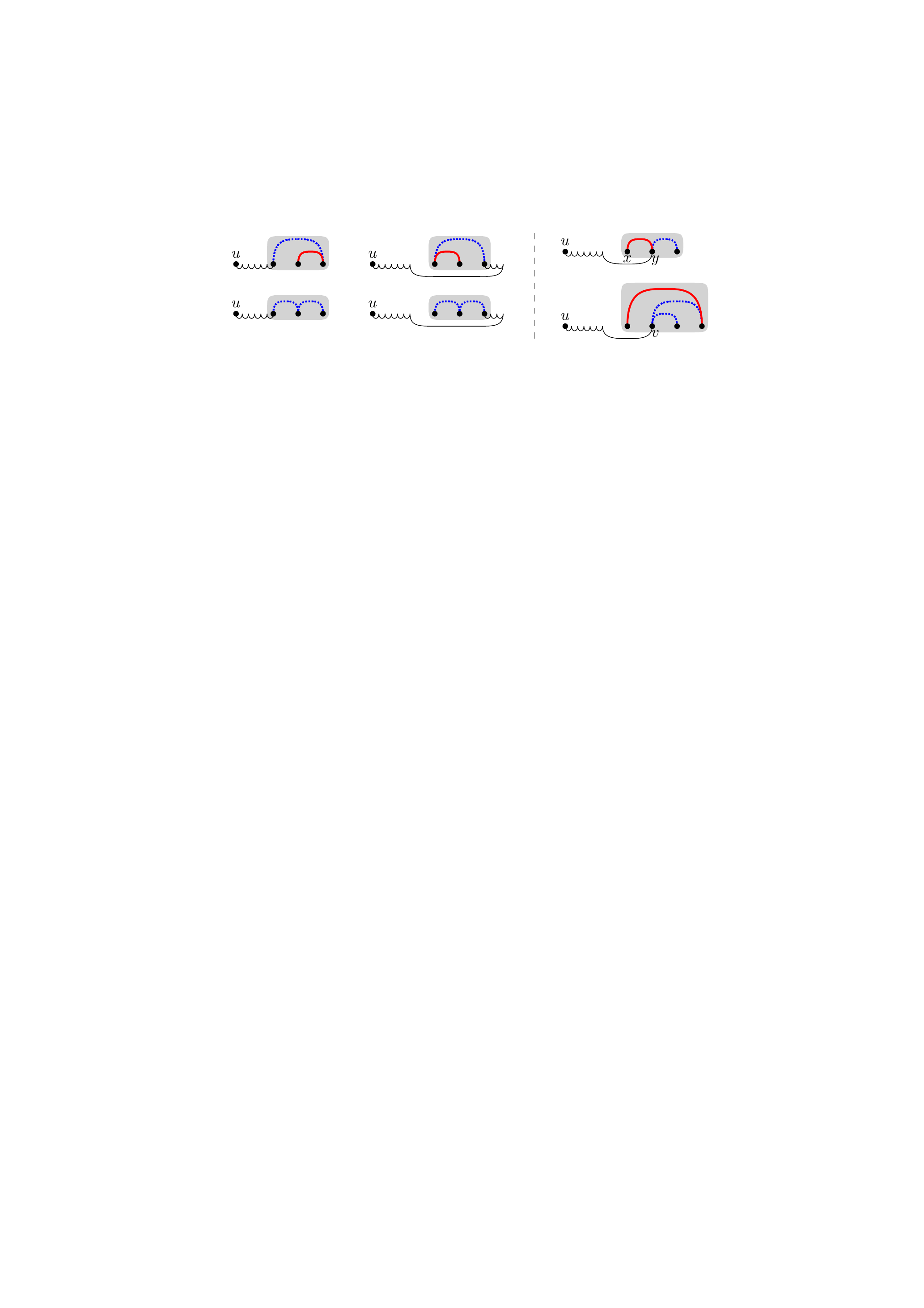}
    \caption{A bend-coloring of the edges of an ordered tree with respect to a vertex $u$ (left) in colors red (solid) and blue (dashed).
    There is no red monotone path on at least two edges (right, top) and in the blue component containing $v$ each vertex has at most one neighbor to the left (right, bottom).}
    \label{fig:ZigZagCol}
   \end{figure}

\begin{lemma}\label{lem:coloringFacts}
Let $F$ be an ordered graph and let $c$ be an edge-coloring of $F$.
\begin{enumerate}
 \item If $c$ is a star-coloring with respect to $u\in V(F)$, then each monochromatic component is a star and all edges incident to $u$ are red.\label{enum:starCol}
 
 \item If $c$ is a bipartite-coloring, then there is no monochromatic copy of a monotone $P_3$.\label{enum:bipCol}
 
 \item If $c$ is a bend coloring, then there is no red copy of a monotone $P_3$ and for each blue component $B$ either each vertex in $B$ has at most one neighbor to the left in $B$ or each vertex in $B$ has at most one neighbor to the right in $B$.\label{enum:bendCol}
\end{enumerate}
\end{lemma}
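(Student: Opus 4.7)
The plan is to verify each of the three parts independently, as each reduces to a short structural consequence of the coloring rule. In parts (a) and (c) I would work with the BFS layers of $F$ from $u$ (for (c) this is simply the rooted tree structure).

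For (a), I start by observing that every edge incident to $u$ sits on a one-edge path containing $u$, so has distance $1$, which is odd, giving red. For the star conclusion, I analyze the edges incident to a vertex $v$: in the BFS structure, edges from $v$ toward the layer of $u$ have distance equal to the BFS level of $v$, while edges from $v$ going to the next-deeper layer have distance equal to that level plus one. These differ in parity and hence in color, so at any vertex the same-color incident edges all go in one BFS direction; consequently any monochromatic component is a star centered at the single vertex at which these edges meet.

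For (b), in any monotone $P_3$ $u_1 < u_2 < u_3$ the two edges have left endpoints $u_1$ and $u_2$. Since the bipartition $A \dot\cup B$ is understood as a proper $2$-coloring of $V(F)$ (that is, $F$ is bipartite with parts $A$ and $B$), the adjacent vertices $u_1$ and $u_2$ lie in different parts, and so the two edges receive opposite colors.

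For (c) I would prove the two subclaims separately. For the absence of a red monotone $P_3$, take $u_1 < u_2 < u_3$ and case-split on the position of $u$ relative to $u_2$, $u_3$ in the tree; in each case the ``next edge'' test at the shared vertex $u_2$ forces the relevant triple on the path to $u$ to be monotone rather than a bend, so one of the two edges must be blue. For the blue component claim, attach to each blue edge $\{v, p(v)\}$ a direction (left or right) according to whether the associated monotone triple $v, p(v), p(p(v))$ on the path to $u$ increases or decreases in the linear order. A short case analysis over the three ways two blue edges can share a vertex (parent--child, two siblings sharing the same parent, or an edge together with its own parent-edge) shows that this direction is constant on each blue component. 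In a right-directed blue component each vertex then has at most one right-neighbor in the component (its parent in the tree) while its in-component children all lie to its left; the symmetric statement handles left-directed components. The main obstacle is this last case analysis---verifying that the direction of a blue edge is a well-defined invariant of the whole blue component---while the other parts follow essentially directly from the definitions.
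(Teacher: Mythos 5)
Your three arguments are correct and essentially match the paper's: the paper dismisses \ref{enum:starCol} and \ref{enum:bipCol} as immediate and proves \ref{enum:bendCol} by taking the vertex $v$ of a blue component $B$ nearest to $u$ and propagating outward; your per-edge ``direction'' invariant is the same argument repackaged (the direction you attach to the edge $vp(v)$ is determined by whether $p(v)$ lies left or right of $v$, which is exactly what the paper reads off at the anchor vertex), and your two-case check that adjacent blue edges agree is the propagation step. Two caveats are worth recording. First, in \ref{enum:starCol} your BFS analysis silently assumes every vertex has at most one neighbour in the previous layer and that there are no intra-layer edges; without that, ``same-colour edges at $v$ all point one way'' does not yield a star (in $K_4$ the three edges avoiding $u$ all have distance $2$ and form a blue triangle, and in the graph obtained from $K_{2,2}$ by adding an apex $u$ the four distance-$2$ edges form a blue $C_4$). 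So \ref{enum:starCol} really requires $F$ to be a forest; this matches the paper's actual use of star-colourings, which is always on forests, but you should state the hypothesis. Second, your direction is defined via the triple $v,p(v),p(p(v))$, which does not exist for a blue edge incident to $u$; such an edge $vu$ is blue only when $u<v$ (otherwise its right endpoint is $u$ and it is red), so it must be assigned the ``parent to the left'' direction by fiat, after which your consistency check at $u$ goes through. With these two points made explicit your write-up is complete.
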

\begin{proof}
The first two statements follow immediately from the definitions, so we only prove~\ref{enum:bendCol}.
Suppose that $c$ is a bend-coloring of some ordered tree $F$ with respect to $u\in V(F)$.
First consider a copy $P$ of a monotone $P_3$ in $F$ that contains some red edge $xy$, $x<y$.
Then either $y=u$ or $xy$ forms a bend with the edge next to $xy$ on the path to $u$ in $F$.
In both cases the other edge in $P$ neither has $u$ as its right endpoint nor forms a bend with the edge next to it on the path to $u$ in $F$.
See Figure~\ref{fig:ZigZagCol} (right, top).
Hence there is no red monotone path on two edges.

Next consider a blue component $B$ and the vertex $v$ in $B$ that has shortest distance to $u$ in $F$ (it might happen that $u=v$).
Suppose that $v$ has some neighbor $w$ in $B$ with $v<w$.
If $u=v$, then each edge $w'v$ in $F$ with $w'<v$ is red (and $w'$ is not in $B$).
If $u\neq v$ and $v'v$ is the edge next to $vw$ on the path to $u$ in $F$, then $v'<v$ (as $vw$ is blue).
Hence $v$ does not have any neighbor $w'$ to the left in $B$, since any such edge $w'v$ is colored red as $w'v$ and $v'v$ form a bend.
Moreover each other vertex in $B$ has exactly one neighbor to the left in $B$, since otherwise there is a path from $v$ (and hence from $u$) to some vertex in $B$ that contains a bend and hence a red edge.
Hence each vertex in $B$ has at most one neighbor to the left in $B$.
See Figure~\ref{fig:ZigZagCol} (right, bottom).

Similar arguments show that if $v$ has some neighbor to the left in $B$, then each vertex in $B$ has at most one neighbor to the right in $B$.
\end{proof}

First we prove Lemma~\ref{lem:unorderedPseudo} on unordered graphs with Ramsey density at most $1$.
In that proof we shall freely use the star-coloring adopted for unordered graphs.
Clearly an analogous statement to Lemma~\ref{lem:coloringFacts}\ref{enum:starCol} holds in this case.

\begin{proof}[Proof of Lemma~\ref{lem:unorderedPseudo}]
 \begin{proofEnum}
  \item Let $H$ be a forest of maximum degree $d$ and let $H'$ be a star forest of maximum degree $t$.
  One can see that for any $2$-coloring of the edges of a sufficiently large $(d+t)$-ary tree $F$ without blue copies of $H'$ there is a copy of $H$ in some component of the red subgraph by a greedy embedding.
  Therefore $F\in\rs(H,H')$ and $r^m(H,H') < 1$.
  
  On the other hand consider a forest $F$ and a pair of graphs $(H,H')$. If $H$ is not a forest, then $F\not\in\rs(H,H')$ since coloring all its edges red yields neither red copies of $H$ nor blue copies of $H'$ (since $H'$ contains at least one edge).
  If neither $H$ nor $H'$ is a star forest then choose a root in each component of $F$ and color the edges of the components according to a star coloring with respect to their respective roots.
  Then there are neither red copies of $H$ nor blue copies of $H'$ by Lemma~\ref{lem:coloringFacts}\ref{enum:starCol} and $F\not\in\rs(H,H')$.
  So in both cases  there is no forest in $\rs(H,H')$ and we have that $r^m(H,H') \geq 1$.
  
  \item Consider a pair $(H,H')$ of graphs.
  If one of $H$ or $H'$ is not a pseudoforest, or one of $H$ or $H'$ contains a cycle while the other is not a partial matching (that is, contains a copy of $P_3$), then clearly there is no pseudoforest in $\rs(H,H')$.
  
  If $H$ is a partial matching and $H'$ is a proper pseudoforest, then let $F$ be a vertex disjoint union of $|V(H)|$ many copies of $H'$. 
  For any coloring of the edges of $F$ either all edges in one of the copies of $H'$ are blue or there is red copy of $H$.
  Hence $F$ is a Ramsey graph of $(H,H')$ and $r^m(H,H') \leq 1$.
  Moreover $r^m(H,H') \geq 1$ by part~\ref{enum:unorderedRamseyForest} and thus $r^m(H,H') = 1$.
  
  This leaves to consider pairs $(H,H')$ of ordered forests.
  If one of $H$ or $H'$ is a star forest, then $r^m(H,H') < 1$ by part~\ref{enum:unorderedRamseyForest}.
  So suppose that both $H$ and $H'$ contain a copy of $P_4$.
  
  First assume that $H'$ has a component which is not a star and not a $P_4$.
  Then $H$ contains either a copy of $P_5$ or a copy of a graph $P$ obtained from $P_4$ by adding a pendant edge, that is, by adding a new vertex $u$ and an edge connecting $u$ to one the vertices of degree $2$ in $P_4$.
  Consider some proper pseudoforest $F$.
  We shall prove that $F\not\in\rs(H,H')$.
  
  If $H'$ contains a copy of $P_5$, then let $F'$ denote the forest obtained from $F$ by removing a smallest set $E$ of edges which contains one edge from each cycle in $F$.
  Color each component of $F'$ according to some star coloring and color all edges in $E$ blue.
  Then by Lemma~\ref{lem:coloringFacts}\ref{enum:starCol} the red edges in $F$ form a star forest and there is no blue copy of $P_5$ in $F$.
  Hence there is no red copy of $H$ and no blue copy of $H'$ and $F$ is not a Ramsey graph of $(H,H')$.
  So there is no pseudoforest in $\rs(H,H')$ in this case, as $F$ was arbitrary.
  
  If $H'$ contains a copy of $P_4$ with a pendant edge, then let $C$ denote the subgraph of $F$ formed by all the cycles in $F$ and let $F'$ denote the subgraph of $F$ formed by all edges not in cycles.
  Each component of $F'$ contains at most one vertex from $C$.
  Color all edges in $C$ blue and color each component of $F'$ according to a star coloring with respect to the unique vertex shared with $C$, if it exists, and with respect to an arbitrary vertex otherwise.
  Then by Lemma~\ref{lem:coloringFacts}\ref{enum:starCol} the red edges form a star forest while the blue edges form a vertex disjoint union of a star forest and cycles.
  Hence there is no red copy of $H$ and no blue copy of $H'$ and $F\not\in\rs(H,H')$.
  Again there is no pseudoforest in $\rs(H,H')$ in this case, as $F$ was arbitrary.
  
  It remains to consider pairs $(H,H')$ of forests of stars and copies of $P_4$, both with at least one copy of $P_4$.
  Let $F$ be a graph obtained from a $5$-cycle with vertices $u_1,\ldots,u_5$ by adding vertices $v_1,\ldots,v_5$ and an edge $u_iv_i$ for each $i$, $1\leq i\leq 5$.
  Moreover let $d$ denote the largest degree among all vertices in $H$ and $H'$, let $F'$ be a forest that is a Ramsey graph for a pair of stars on $d$ edges, and let $F''$ be a forest that is a Ramsey graph for a pair of a star on $d$ edges and $P_4$.
  Such forests exist by part~\ref{enum:unorderedRamseyForest}.
  One can see that $F\to(P_4,P_4)$~\cite{MP13} and that a suitable vertex-disjoint union of several copies of the graphs $F$, $F'$, and $F''$ forms a Ramsey graph of $(H,H')$.
  Since such a union is a pseudoforest we have $r^m(H,H') \leq 1$.
  Moreover $r^m(H,H') \geq 1$ by part~\ref{enum:unorderedRamseyForest} and thus $r^m(H,H') = 1$.\qedhere  
 \end{proofEnum}
\end{proof}

\begin{lemma}\label{lem:noRamseyPseudoforest}
 Let $H$ and $H'$ be ordered graphs. Then $\ors(H,H')$ does not contain a pseudoforest in each of the following cases.
 \begin{enumerate}
  \item One of $H$ and $H'$ contains a cycle and the other is not a partial matching.\label{enum:properPseudo}
 
  \item One of $H$ or $H'$ contains a vertex with two neighbors to the right and the other contains a vertex with two neighbors to the left.\label{enum:sameOrientaionPseudo}
  
  \item One of $H$ and $H'$ contains a copy of a monotone $P_3$ and the other contains a copy of an ordered $P_4$.\label{enum:P4}
  
  \item One of $H$ and $H'$ contains a copy of a monotone $P_3$ and the other contains a copy of a star on three edges that is neither a right star nor a left star .\label{enum:threeStar}
 \end{enumerate}
\end{lemma}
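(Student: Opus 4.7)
The plan is to prove each of the four cases independently by constructing, for an arbitrary pseudoforest $F$, a red/blue edge-coloring of $E(F)$ that avoids a red copy of $H$ and a blue copy of $H'$. In every case it suffices to forbid in each color a specific small sub-structure (a cycle, a right $2$-star, or a monotone $P_3$ in red; a $P_3$, a left $2$-star, an ordered $P_4$, or the specified mixed $3$-star in blue), since a monochromatic copy of $H$ or $H'$ would contain one. The colorings rely on the tools of Lemma~\ref{lem:coloringFacts} together with the structural fact that every component of a pseudoforest contains at most one cycle.

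For case~\ref{enum:properPseudo}, pick one edge from every cycle of $F$; since distinct cycles lie in distinct components, these edges form a matching $M$. Color $M$ blue and the remaining edges red. The red subgraph is acyclic, hence contains no copy of $H$ (which has a cycle), and the blue subgraph is a matching, hence contains no $P_3$ and so no copy of $H'$ (which is not a partial matching).

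For case~\ref{enum:sameOrientaionPseudo}, assume without loss of generality that $H$ has a vertex with two right-neighbors and $H'$ a vertex with two left-neighbors. I aim for a coloring in which every vertex has at most one red right-neighbor and at most one blue left-neighbor. This I phrase as a bipartite matching problem: each edge $uv$ of $F$ (with $u<v$) chooses between a ``red-right'' slot at $u$ and a ``blue-left'' slot at $v$, each slot of capacity one. Hall's condition reduces to $|L(S)|+|R(S)|\geq|S|$ for every $S\subseteq E(F)$, where $L(S)$, $R(S)$ are the distinct left and right endpoints of the edges in $S$; this holds because $F[S]$ is a pseudoforest, giving $|S|=|E(F[S])|\leq|V(F[S])|\leq|L(S)|+|R(S)|$. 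The resulting coloring forbids a red right $2$-star and a blue left $2$-star, hence red $H$ and blue $H'$.

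For cases~\ref{enum:P4} and~\ref{enum:threeStar}, I would start from a bend-coloring of each tree component of $F$ (cycle edges, at most one per component, are deleted first and re-inserted with a compatible color afterwards); by Lemma~\ref{lem:coloringFacts}~\ref{enum:bendCol} this already rules out red monotone $P_3$. For case~\ref{enum:threeStar}, choosing the bend-coloring root at the leftmost (or rightmost) vertex of each tree should force every blue neighbor of every vertex to lie on one side, which is incompatible with a blue mixed $3$-star. For case~\ref{enum:P4}, the blue subgraph of the bend-coloring is a union of one-sided trees which may still contain $P_4$s, both as monotone chains and as branching ``V-shapes'' with a common ancestor; one must therefore adjust the coloring locally at vertices of large blue degree, re-coloring some blue edges red without breaking the one-sidedness of the red edges at shared vertices. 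The main obstacle is case~\ref{enum:P4}: a blue $P_4$ can span several branches of a single blue component, so a local edge-by-edge fix does not suffice, and one expects a global argument (e.g.\ by induction on $|V(F)|$ after removing a leaf, or by assigning each vertex an orientation $o(v)\in\{L,R\}$ and checking a Hall-type condition) to yield a valid coloring. Cases~\ref{enum:properPseudo} and~\ref{enum:sameOrientaionPseudo} are essentially immediate, and case~\ref{enum:threeStar} reduces quickly once the bend-coloring root is chosen.
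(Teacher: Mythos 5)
Your cases \ref{enum:properPseudo} and \ref{enum:sameOrientaionPseudo} are fine. Case \ref{enum:properPseudo} is the paper's argument. For case \ref{enum:sameOrientaionPseudo} the paper instead inducts on $|E(F)|$ (peel off a degree-one vertex, colour its edge according to which side the leaf lies on, and handle the leftover disjoint cycles by hand); your Hall-type slot assignment is a correct and arguably cleaner alternative, since the slots $RR(u)$ and $BL(v)$ reachable from an edge set $S$ number $|L(S)|+|R(S)|\geq |L(S)\cup R(S)|=|V(F[S])|\geq |E(F[S])|=|S|$ by the pseudoforest property, so Hall's condition holds and the resulting colouring has no vertex with two red right-neighbours and no vertex with two blue left-neighbours.

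The gap is in cases \ref{enum:P4} and \ref{enum:threeStar}, and in both it stems from reaching for the bend-colouring where the bipartite-colouring is the right tool. For case \ref{enum:threeStar}, your claim that rooting the bend-colouring at the leftmost vertex forces every blue neighbourhood to one side is false: in the tree $u<a<b<c$ with edges $ua$, $ab$, $ac$ and root $u$, none of these edges forms a bend with the next edge toward $u$ (the shared vertex $a$ sits in the middle, which is a monotone configuration, not a bend), so all three edges are blue and $a$ is the centre of a blue star with one left and two right neighbours --- exactly the configuration you must exclude. The observation you are missing is that a mixed $3$-star itself contains a monotone $P_3$, so in case \ref{enum:threeStar} \emph{both} graphs contain a monotone $P_3$ and a bipartite-colouring (Lemma~\ref{lem:coloringFacts}\ref{enum:bipCol}) kills both colours simultaneously; only the single odd cycle per component then needs a special bipartition (the paper colours the cycle blue and puts the cycle vertices with two right-neighbours on the cycle into $B$).

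For case \ref{enum:P4} you have not given a proof: you correctly diagnose that the blue subgraph of a bend-colouring can still contain ordered $P_4$'s and that local repairs fail, but you then only ``expect'' a global argument to exist. The paper resolves this by splitting on the order type of the $P_4$ in $H'$. When that $P_4$ contains a monotone $P_3$ the bipartite-colouring is used (after deleting one edge of the odd cycle and choosing the bipartition so both its endpoints land in $A$, or after colouring an entire monotone subtree blue); the bend-colouring is used only when the $P_4$ is a zigzag containing no monotone $P_3$, in which case Lemma~\ref{lem:coloringFacts}\ref{enum:bendCol} excludes a blue copy directly because such a $P_4$ has both a vertex with two left-neighbours and a vertex with two right-neighbours. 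Also note that reinserting the deleted cycle edge with a ``compatible colour'' is not automatic; it requires the specific choices of root and deleted edge made in the paper. As written, cases \ref{enum:P4} and \ref{enum:threeStar} remain unproven.
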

\begin{proof}
 Let $F$ be a pseudoforest.
 For each of the cases we shall give a coloring of the edges of $F$ without red copies of $H$ or blue copies of $H'$.
 \begin{proofEnum} 
  \item Without loss of generality assume that $H$ contains a cycle and $H'$ is not a partial matching.
  Choose a smallest set $E$ of edges which  contains one edge from each of the cycles of $F$.
  Then color all edges in $E$ blue and all the other edges of $F$ red.
  Since the edges in $E(F)\setminus E$ form a forest there is no red copy  of $H'$ and since the edges in $E$ form a matching there is no blue copy  of $H'$.
  Hence $F\not\in\ors(H,H')$ and $\ors(H,H')$ contains no pseudoforest.
 
  \item Without loss of generality assume that $H$ contains a vertex with two neighbors to the right and $H'$ contains a vertex with two neighbors to the left.
  We give a $2$-coloring of the edges of $F$ without red copies of $H$ or blue copies of $H'$  by induction on the number of edges of $F$.
  Indeed such a coloring clearly exists if $\lvert E(F)\rvert =1$.
  If $\lvert E(F)\rvert >1$ we distinguish two cases.
  First suppose that $F$ has some vertex $v$ of degree $1$.
  Remove $v$ from $F$ and color the resulting pseudoforest inductively.
  If $v$ is to the left of its neighbor $u$ in $F$ then color $uv$ red, otherwise color it blue.
  This coloring of $F$ contains neither red copies of $H$ nor blue copies of $H'$.
    Next assume that $F$ contains no vertex of degree $1$, that is, $F$ is a vertex disjoint union of cycles.
  For each even cycle in $F$ color its edges alternatingly red and blue and for each odd cycle color both edges incident to its leftmost vertex blue and the remaining edges alternatingly red and blue.
  This coloring contains neither red copies of $H$ nor blue copies of $H'$. 
  In both cases $F\not\in\ors(H,H')$ and hence $\ors(H,H')$ contains no pseudoforest.
  
  \item Without loss of generality assume that $H$ contains a copy of a monotone $P_3$ and $H'$ contains a copy $P$ of an ordered $P_4$.
  We shall give a coloring of $F$ with no red copies of $P_3$ and no blue copies of $P$.
  Since  $P_3$ and $P$ are connected we assume without loss of generality that $F$ is connected.
  We distinguish cases based on the ordering of $P$.
  
  First assume that $P$ forms a monotone $P_4$.
  If $F$ is bipartite, then color its edges using a bipartite-coloring with respect to an arbitrary bipartition of $F$.
  Then there is no red copy of $H$ and no blue copy of $H'$ by Lemma~\ref{lem:coloringFacts}\ref{enum:bipCol}.
  If $F$ is not bipartite, then we obtain a bipartite graph $F'$ from $F$ by removing some edge $e$ from the (unique) odd cycle in $F$.
  Note that for any bipartition of $F'$ the endpoints of $e$ belong to the same part.
  Choose such a partition $A\cup B=V(F')$ such that the endpoints of $e$ belong to $A$.
  Color the edges in $F'$ using the bipartite-coloring with respect to the partition formed by $A$ and $B$.
  Further color $e$ blue.
  By Lemma~\ref{lem:coloringFacts}\ref{enum:bipCol} there is no red copy of $H$ and each blue copy of a monotone $P_3$ contains $e$.
  Since the left endpoint of $e$ is in $A$ we have that all edges incident to this vertex to the left are colored red.
  Therefore there is no blue copy of a monotone $P_4$ and thus no blue copy of $H'$.
  In both cases $F\not\in\ors(H,H')$ and hence $\ors(H,H')$ contains no pseudoforest.
  
  Next assume that $P$ contain a copy of a monotone $P_3$ whose rightmost vertex has two neighbors to the left in $P$.
  If $F$ is bipartite, then color its edges using a bipartite-coloring with respect to an arbitrary bipartition of $F$.
  Then there is no red copy of $H$ and no blue copy of $H'$ by Lemma~\ref{lem:coloringFacts}\ref{enum:bipCol}.
  Otherwise consider the odd cycle $C$ in $F$ and edges $uv$, $u'v$ in $C$ such that $v$ is the rightmost vertex of $C$ and $u'<u<v$.
  Let $T$ denote the subgraph of $F$ formed by the union of all monotone paths in $F$ whose leftmost vertex is $u$.
  Then $T$ is a tree, since it does not contain $u'v$ (and $C$ is the only cycle in $F$).
  Color all edges of $T$ blue.
  Then there is no blue copy of $P$ in $T$.
  The remaining edges form a forest $F'$ since $F'$ does not contain $uv$.
  Consider a bipartition $A\cup B=V(F')$ where each vertex shared with $T$ is in $B$.
  Such a partition exists since either $u$ and $v$ are connected by a path in $F'$ with an odd number of vertices or are in distinct components.
  Color the edges in $F'$ using a bipartite-coloring with respect to the partition formed by $A$ and $B$, see Figure~\ref{fig:treeColorings} (left).
  Then in $F'$ there are no monochromatic copies of a monotone $P_3$ by Lemma~\ref{lem:coloringFacts}\ref{enum:bipCol}.
  In particular there is no red copy of $H$ in $F$, since all edges in $T$ are blue.
  Moreover each blue edge in $F'$ does not contain any vertex of $T$ since edges in $F'$ may share only their right endpoint with $T$.
  Hence each blue component of $F$ is either in $F'$ or in $T$ and hence there is no blue copy of $H'$ in $F$.
  Altogether $F\not\in\ors(H,H')$ and hence $\ors(H,H')$ contains no pseudoforest.

\begin{figure}
 \centering
 \includegraphics{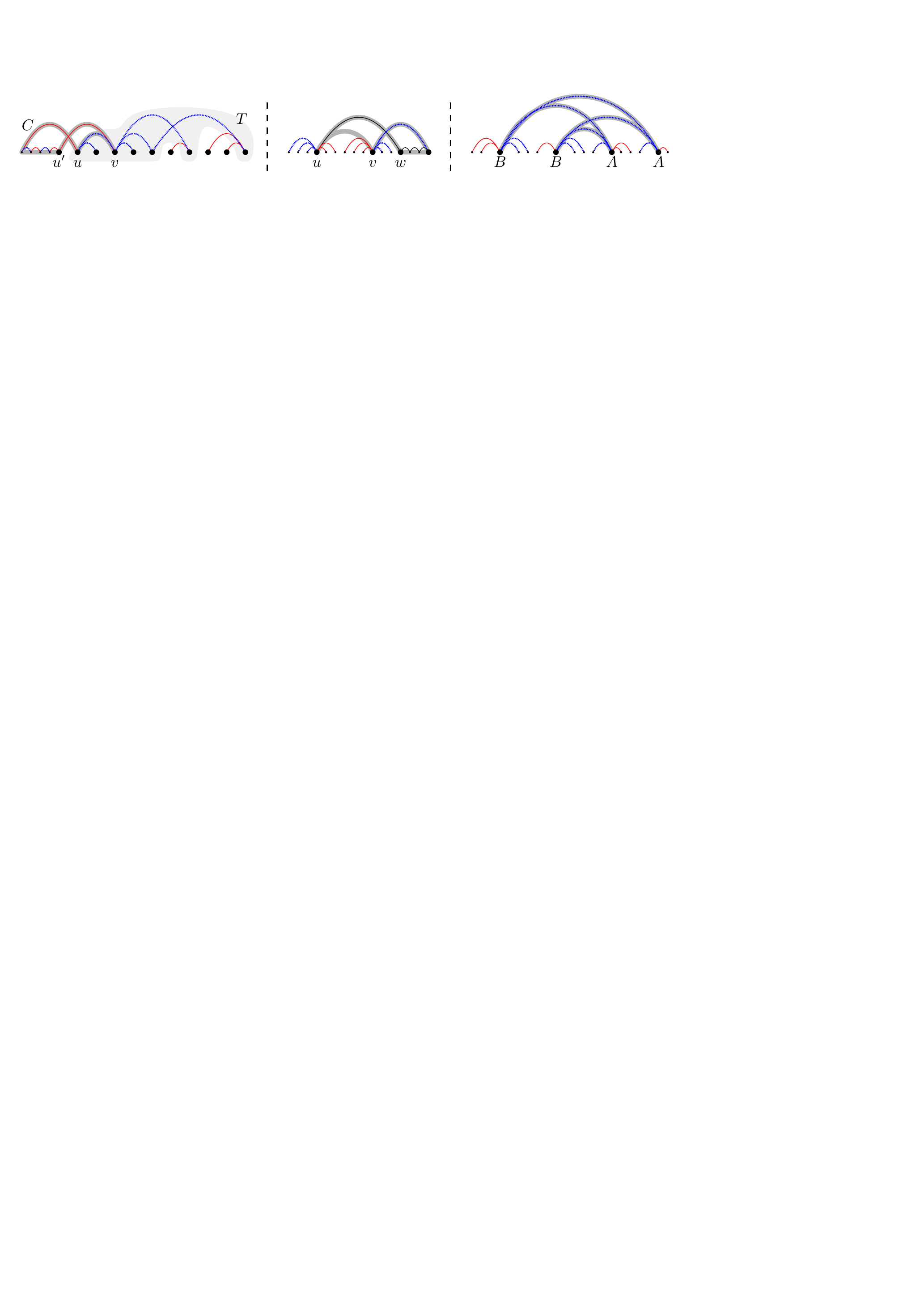}
 \caption{Colorings of proper pseudoforests. In the left all edges of a certain tree $T$ are colored blue and the remaining edges are coloring using a bipartite coloring. In the middle an edge $uv$ is removed from a cycle and the remaining forest is colored with a bend coloring with respect to $v$. In the right all edges of a cycle are colored blue and the remaining edges are colored using a bipartite coloring.}
 \label{fig:treeColorings}
\end{figure}

  Next assume that $P$ contain a copy of a monotone $P_3$ whose leftmost vertex has two neighbors to the right.
  In this case $\ors(H,H')$ contains no pseudoforest with arguments similar to the previous case.
    
  Finally assume that $P$ does not contain any copy of a monotone $P_3$.
  Then $P$ has some vertex with two neighbors to the left and another vertex with two neighbors to the right.
  If $F$ is a tree, then color its edges according to some bend-coloring with respect to an arbitrary vertex.
  Then there is no red copy of a monotone $P_3$ and no blue copy of $P$ by Lemma~\ref{lem:coloringFacts}\ref{enum:bendCol}.
  Otherwise let $u$ denote the leftmost vertex of the cycle in $F$ and let $v$ and $w$ be its neighbors in that cycle.
  Let $F'$ be a tree obtained y removing the edge $uv$ from $F$.
  Color $F'$ with a bend-coloring with respect to $v$, see Figure~\ref{fig:treeColorings} (middle).
  Then in $F'$ there is no red copy of a monotone $P_3$ and no blue copy of $P$ by Lemma~\ref{lem:coloringFacts}\ref{enum:bendCol}.
  Note that any edge $xu$ in $F'$ with $x<u$ is colored blue, since the next edge on the path to $u'v$ is $uw$ which does not form a bend with $xu$.
  Moreover any edge $vy$ in $F'$ with $v<y$ is colored blue.
  The coloring of $F'$ gives a coloring of $F$ by coloring $uv$ red.
  Then there is no blue copy of $P$ since there is no such copy in $F'$ and no red copy of a monotone $P_3$ since edges $xu$, $x<u$, and edges $vy$, $v<y$, are blue.
  Altogether $F\not\in\ors(H,H')$ and hence $\ors(H,H')$ contains no pseudoforest.
  
  \item  Without loss of generality assume that $H$ contains a copy of a monotone $P_3$ and $H'$ contains a star on three edges whose center has one neighbor to the left and two neighbors to the right.
  Note that $H'$ contains a copy of a monotone $P_3$.
  If $F$ is bipartite, then color its edges using a bipartite-coloring with respect to an arbitrary bipartition of $F$.
  Then there is no red copy of $H$ and no blue copy of $H'$ by Lemma~\ref{lem:coloringFacts}\ref{enum:bipCol}.
  Otherwise color all edges of $F$ that are contained in a cycle blue.
  The remaining edges form a forest $F'$.
  Choose a bipartition $A\cup B=V(F')$ such that all vertices in the cycle in $F$ that have two neighbors to the right in that cycle are in $B$ and the other vertices of the cycle are in $A$.
  Such a partition exists since the vertices of the cycle in $F$ are in different components of $F'$.
  Color the edges in $F'$ using a bipartite-coloring with respect to the partition formed by $A$ and $B$, see Figure~\ref{fig:treeColorings} (right).
  Then in $F'$ there are no monochromatic copies of a monotone $P_3$ by Lemma~\ref{lem:coloringFacts}\ref{enum:bipCol}.
  In particular there is no red copy of $H$ in $F$, since all edges not in $F'$ are blue.
  Moreover each vertex which is left endpoint of at least two blue edges in $F$ is in $B$.
  Such a vertex is not a right endpoint of any blue edge.
  This shows that there is no blue copy of $H'$ in $F$.
 Altogether $F\not\in\ors(H,H')$ and hence $\ors(H,H')$ contains no pseudoforest.\qedhere
 \end{proofEnum}
\end{proof}

\begin{lemma}\label{lem:noRamseyForest}
  Let $H$ and $H'$ be ordered forests. Then $\ors(H,H')$ does not contain a forest in each of the following cases.
  \begin{enumerate} 
   \item Both $H$ and $H'$ contain a component that is not a star.\label{enum:star}
   
   \item One of $H$ or $H'$ contains a vertex with two neighbors to the right and the other contains a vertex with two neighbors to the left.\label{enum:sameOrientaion}
   
   \item Both $H$ and $H'$ contain a monotone path on two edges.\label{enum:monotonePath}
   
   \item One of $H$ and $H'$ contains a copy of a monotone $P_3$ and the other contains a copy of an ordered $P_4$.\label{enum:alternatingPath}
  \end{enumerate}
 \end{lemma}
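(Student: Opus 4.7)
My plan is to prove each of the four cases (a)--(d) by exhibiting, for an arbitrary forest $F$, a $2$-coloring of its edges with no red copy of $H$ and no blue copy of $H'$. The colorings will be the star-, bipartite-, and bend-colorings from Lemma~\ref{lem:coloringFacts}, applied tree-by-tree to the components of $F$. This mirrors the approach of Lemma~\ref{lem:noRamseyPseudoforest}, but since $F$ has no cycles, all the cycle-handling subcases in that proof disappear and the arguments become considerably shorter.

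Cases (a) and (c) require essentially no additional work. For (a), I would root each tree of $F$ arbitrarily and apply a star-coloring; Lemma~\ref{lem:coloringFacts}\ref{enum:starCol} then forces every monochromatic component to be a star, contradicting the non-star components assumed in both $H$ and $H'$. For (c), $F$ is bipartite, so a bipartite-coloring with respect to any bipartition avoids monochromatic monotone $P_3$'s by Lemma~\ref{lem:coloringFacts}\ref{enum:bipCol}, and by assumption both $H$ and $H'$ contain such a $P_3$.

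For case (b), rather than using one of the three prepackaged colorings, I would construct the coloring inductively on $|E(F)|$. Pick a leaf $v$ of $F$ with unique neighbor $u$, color $F-v$ by induction, and color the new edge $uv$ red if $v<u$ and blue if $v>u$. The induction invariant is that no vertex has two red neighbors to the right and no vertex has two blue neighbors to the left. If an edge becomes red then the non-leaf endpoint $u$ only gains a red neighbor on its \emph{left}, leaving its right-red-degree unchanged, and the leaf $v$ gains exactly one neighbor in total; the symmetric statement handles the blue edges. Since by assumption one of $H$ or $H'$ has a vertex with two right neighbors and the other a vertex with two left neighbors, neither can embed monochromatically.

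Case (d) is where the main work lies. Assume WLOG that $H$ contains a monotone $P_3$ and $H'$ contains an ordered copy $P$ of $P_4$. If $P$ itself contains a monotone $P_3$, the bipartite-coloring of case (c) already suffices. Otherwise, writing $P = v_1 v_2 v_3 v_4$, the fact that neither $v_1 v_2 v_3$ nor $v_2 v_3 v_4$ is a monotone $P_3$ forces $v_2$ to be the minimum or maximum of $\{v_1,v_2,v_3\}$ and $v_3$ to be the minimum or maximum of $\{v_2,v_3,v_4\}$; a short case check on these four combinations (two are immediately contradictory) shows that $P$ then contains both a vertex with two left neighbors and a vertex with two right neighbors. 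Now apply a bend-coloring tree-by-tree to $F$: by Lemma~\ref{lem:coloringFacts}\ref{enum:bendCol} there is no red monotone $P_3$, hence no red $H$, and each blue component has every vertex with at most one left neighbor or every vertex with at most one right neighbor in it, which excludes $P$ and hence any blue $H'$. I expect this enumeration of orderings of $P_4$, together with verifying that both types of bend are genuinely present, to be the main obstacle.
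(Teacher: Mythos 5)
Your proposal is correct and follows essentially the same route as the paper: cases \ref{enum:star} and \ref{enum:monotonePath} are handled verbatim by the star- and bipartite-colorings, and your arguments for \ref{enum:sameOrientaion} (inductive leaf-removal coloring) and \ref{enum:alternatingPath} (bipartite-coloring when $P$ contains a monotone $P_3$, bend-coloring otherwise, after checking that a $P_4$ with no monotone $P_3$ has both a vertex with two left neighbors and one with two right neighbors) are exactly the forest-specialized versions of the colorings the paper invokes via Lemma~\ref{lem:noRamseyPseudoforest}\ref{enum:sameOrientaionPseudo} and \ref{enum:P4}. The only difference is presentational: the paper cites the more general pseudoforest lemma, while you inline its forest case.
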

 \begin{proof}
  Let $F$ be a forest.
  For each of the cases we shall give a coloring of the edges of $F$ without red copies of $H$ or blue copies of $H'$.
  \begin{proofEnum} 
   \item For each component of $F$ color its edges according to a star-coloring with respect to some arbitrary vertex in that component.
   Then each color class forms a star forest by Lemma~\ref{lem:coloringFacts}\ref{enum:starCol}, that is, there is neither a monochromatic copy of $H$ nor of $H'$.
   Hence $\ors(H,H')$ contains no forest.
   
   \item This follows immediately from Lemma~\ref{lem:noRamseyPseudoforest}\ref{enum:sameOrientaionPseudo}.
   
   \item Color the edges of $F$ according to a bipartite-coloring with respect to an arbitrary bipartition.   
   This coloring contains neither red copies of $H$ nor blue copies of $H'$ by Lemma~\ref{lem:coloringFacts}\ref{enum:bipCol}.
   Hence $\ors(H,H')$ contains no forest.
   
   \item This follows immediately from Lemma~\ref{lem:noRamseyPseudoforest}\ref{enum:P4}.\qedhere
%
  \end{proofEnum}\end{proof}

\begin{proof}[Proof of Theorem~\ref{thm:OrderedRamseyForest}]
 We shall prove that all pairs of ordered forests that do not have a forest as an ordered Ramsey graph are covered by Lemma~\ref{lem:noRamseyForest}.
 To this end we provide explicit constructions of ordered forests that are ordered Ramsey graphs for the remaining pairs.
 
 First we shall show that each pair $(H,H')$ where $\ors(H,H')$ contains a forest satisfies at least one of the cases of this theorem.
 Let $(H,H')$ be such a pair.
 Clearly $H$ and $H'$ are forests, since any monochromatic subgraph of an edge-colored forest is a forest itself.
 If either $H$ or $H'$ is a partial matching then Case~\ref{enum:matching} of this theorem holds.
 So assume that neither $H$ nor $H'$ is a partial matching.
 Due to Lemma~\ref{lem:noRamseyForest}~\ref{enum:star} one of $H$ or $H'$ is a star forest.
 Without loss of generality assume that $H$ is a star forest.
 Due to Lemma~\ref{lem:noRamseyForest}~\ref{enum:monotonePath} one of $H$ or $H'$ does not contain a monotone $P_3$.
 
 First suppose that $H$ does not contain a monotone $P_3$.
 Then each component of $H$ is a left or a right star.
 Due to Lemma~\ref{lem:noRamseyForest}~\ref{enum:sameOrientaion} the following holds.
 If each component of $H$ is a right star, then each vertex of $H'$ has at most one neighbor to the left (as $H$ is not a partial matching).
 Thus $H$ and $H'$ satisfy Case~\ref{enum:rightForests} of this theorem.
 Similarly, if each component of $H$ is a left star, then each vertex of $H'$ has at most one neighbor to the right.
 Thus $H$ and $H'$ satisfy Case~\ref{enum:leftForests} of this theorem.
 If $H$ contains a right star on two edges as well as a left star on two edges, then each component of $H'$ is a monotone path.
 Thus $H$ and $H'$ satisfy Case~\ref{enum:RightLeftStars} of this theorem.
 
 Now suppose that $H$ contains a monotone $P_3$.
 Then $H'$ neither contains a monotone $P_3$ nor any ordered $P_4$ due to Lemma~\ref{lem:noRamseyForest}~\ref{enum:monotonePath} and~\ref{enum:alternatingPath}.
 Therefore each component of $H'$ is a left or a right star.
 The same arguments as above show that $H$ and $H'$ satisfy one of the Cases~\ref{enum:rightForests}, \ref{enum:leftForests}, or~\ref{enum:RightLeftStars} of this theorem.
 

 Next consider two ordered forests $H$ and $H'$ that satisfy Case~\ref{enum:matching}, \ref{enum:rightForests}, \ref{enum:leftForests}, or~\ref{enum:RightLeftStars} of this theorem.
 We shall show that there is a forest in $\ors(H,H')$ and distinguish which case of this theorem holds.
 First of all suppose that $H$ and $H'$ together contain some $t>0$ isolated vertices.
 Let $\bar{H}$ and $\bar{H'}$ be obtained from $H$ respectively $H'$ by removing these $t$ isolated vertices.
 Then there is a forest in $\ors(H,H')$ if and only if there is a forest in $\ors(\bar{H},\bar{H'})$.
 Indeed, if $F$ is an ordered forest in $\ors(H,H')$, then $F\in\ors(\bar{H},\bar{H'})$.
 Suppose that $\bar{F}$ is an ordered forest in $\ors(\bar{H},\bar{H'})$.
 Then we obtain an ordered forest in $\ors(H,H')$ by adding $t$ isolated vertices to the left of all vertices in $\bar{F}$, to the right of all vertices in $\bar{F}$, as well as between any pair of consecutive vertices of $\bar{F}$.
 Similarly $\ors(H,H')$ contains a partial matching if and only if $\ors(\bar{H},\bar{H'})$ contains a partial matching.
 For the remaining proof we assume that neither $H$ nor $H'$ contains isolated vertices.

 \begin{proofEnum} 
  \item Without loss of generality assume that $H$ is a  matching.
  Consider a complete ordered graph $K$ of order $r=\orn(H,H')$ with vertices $v_1<\cdots<v_r$.  
  Let $k=\lvert V(H')\rvert $, $m'=\binom{r}{k}$, and $m=\binom{r-1}{k-1}$.
  Note that $K$ contains exactly $m'$ copies of $H'$ and each vertex of $K$ is contained in exactly $m$ copies of $H'$ in $K$.
  We shall construct an ordered graph $F$ that is a vertex disjoint union of $m'$ copies of $H'$.
  For each $i\in[r]$ let $H_i^1,\ldots,H_i^m$ denote the copies of $H'$ in $K$ containing $v_i$.
  Choose disjoint ordered vertex sets $V_i=(v_i^1,\ldots,v_i^m)$ of size $m$ each, $i\in[r]$.
  Let $F$ denote the ordered graph with vertex set $\cup_{i=1}^r V_i$, $V_1\prec\cdots\prec V_r$, where $v_i^jv_s^t$ is an edge in $F$ if and only if $H_i^j=H_s^t$ and the edge $v_iv_s$ is in $H_i^j=H_s^t$, $1\leq i<s\leq r$, $1\leq j,t\leq m$.
   See Figure~\ref{fig:matchingRamseyForest}.
   \begin{figure}
    \centering
    \includegraphics{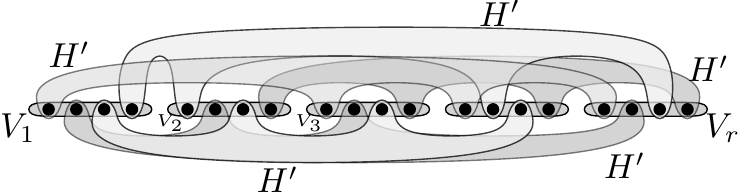}
    \caption[An ordered graph in $\ors(H,H')$ for some matching~$H$.]{Disjoint copies of $H'$ forming an ordered graph in $\ors(H,H')$ for some matching~$H$.}
    \label{fig:matchingRamseyForest}
   \end{figure}
   
   Observe that $F$ is a vertex disjoint union of copies of $H'$ and hence a forest.
   Moreover, if $H'$ is a matching, then $F$ is a matching.
   We claim that $F\in\ors(H,H')$.
   Consider a $2$-coloring $c$ of the edges of $F$.
   We shall show that there is either a red copy of $H$ or a blue copy of $H'$.
   To this end consider the edge-coloring $c'$ of $K$ where an edge $v_iv_s$, $1\leq i<s\leq r$, is colored red if there is at least one red edge between $V_i$ and $V_s$ in $F$ and blue otherwise.
   Due to the choice of $K$ there is either a red copy of $H$ or a blue copy of $H'$ under $c'$.
   In either case there is a red copy of $H$ respectively a blue copy of $H'$ under $c$.
   Thus $F\in\ors(H,H')$.

   \item Without loss of generality assume that each component of $H$ is a right star and each vertex in $H'$ has at most one neighbor to the left.
   We shall prove that there is a forest in $\ors(H,H')$ by induction on the size of $H'$.
   If $H'$ has only one edge, then clearly $H$ is in $\ors(H,H')$ and we are done.
   So suppose that $H'$ has at least two edges.
   Let $w$ denote the rightmost vertex of $H'$ and let $F'$ denote an ordered forest in $\ors(H,H'-w)$ which exists by induction (note that $w$ is a leaf in $H'$ and $H'$ has no isolated vertices).
   We shall construct an ordered forest $F$ as follows.
   Let $v_1<\cdots<v_n$ denote the vertices of $H$ and let $S$ denote the set of left endpoints of edges in $H$, that is, the centers of the right stars in $H$.
   
   Choose $n$ disjoint ordered sets of vertices $V_1\prec\cdots\prec V_n$ of size $\lvert V(F')\rvert $ each.
   For each $i\in[n]$, with $v_i\in S$, add edges among the vertices in $V_i$ such that $V_i$ induces a copy of $F'$.
   For each edge $v_iv_j$ in $H$, $1\leq i<j\leq n$, add an arbitrary perfect matching between $V_i$ and $V_j$.
   See Figure~\ref{fig:blowupStarForest} for an illustration.
   \begin{figure}
    \centering
    \includegraphics{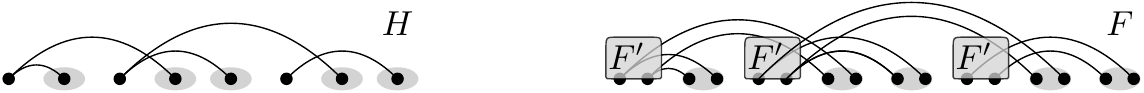}
    \caption[A forest in $\ors(H,H')$ where $H$ is a forest of right stars and $H'$ is a forest where each vertex has at most one neighbor to the left.]{A forest $F$ in $\ors(H,H')$ formed from disjoint copies of a forest $F'\in\ors(H,H'-w)$ with attached leaves. Here $H$ is a forest of right stars, $H'$ is a forest where each vertex has at most one neighbor to the left, and $w$ is the rightmost vertex of $H'$.}
    \label{fig:blowupStarForest}
   \end{figure}

   Then $F$ is a vertex disjoint union of copies of $F'$ with some leaves attached.
   In particular $F$ is a forest.
   We claim that $F\in\ors(H,H')$.
   Consider a coloring of the edges of $F$.
   For each $u\in V_i$ there is an edge $uv$ with $v\in V_j$ for some $j>i$, since each $v_i\in S$ is left endpoint of some edge $v_iv_j$ in $H$.
   If for each $v_i\in S$ there exists $u\in V_i$ such that an edge $uv$ is red whenever $v\in V_j$ with $j>i$, then there is a red copy of $H$.
   So assume that there is some $v_i\in S$ such that for each $u\in V_i$ there is a blue edge $uv$ for some $v\in V_j$, $j>i$.
   Since $V_i$ induces a copy of $F'$, there is either a red copy of $H$ or a blue copy of $H'-w$.
   In the latter case some edge between $V_i$ and $V_j$ yields a blue copy of $H'$ in $F$, since $w$ is rightmost and of degree $1$ in $H'$.
   Altogether $F$ is a forest in $\ors(H,H')$.
   
   \item This follows from Case~\ref{enum:rightForests}.
   
   \item Without loss of generality assume that each component of $H$ is a left or a right star and each component of $H'$ is a monotone path.
   We shall prove that there is a forest in $\ors(H,H')$ by induction on the number of components of $H$ and the size of $H'$.
   If $H$ has only one component, then there is a forest in $\ors(H,H')$ by Case~\ref{enum:rightForests} or~\ref{enum:leftForests}.
   If $H'$ has only one edge, then $H$ is in $\ors(H,H')$ and we are done.
   Suppose that $H$ has at least two components and $H'$ has at least two edges.
   Let $S$ denote a component of $H$.
   Without loss of generality assume that $S$ is a right star.
   Let $w$ denote the rightmost vertex of $H'$ (recall that $H'$ has no isolated vertices so $w$ is of degree $1$).
   By induction there are ordered forests $A\in\ors(H-S,H')$ and $B\in\ors(H,H'-w)$.
   Let $a_1<\cdots<a_n$ denote the vertices of $A$.
   Let $F'$ consist of an intervally disjoint union of $n+1$ copies $B_1,\ldots,B_{n+1}$ of $B$.
   Consider an ordered forest $F''$ that is a vertex disjoint union of $F'$ and $A$ where for each $i$, $1\leq i\leq n$, the vertex $a_i\in V(A)$ is between $B_i$ and $B_{i+1}$.
   See Figure~\ref{fig:spannedStars} (left).
   Obtain an ordered forest $F$ from $F''$ as follows.
   For all $i$, $j$ with $1\leq i\leq j\leq n+1$, and each vertex $u$ in $B_i$ add a star with center $u$ and $\lvert E(S)\rvert $ leaves to the right of $B_j$ and, if $j<n+1$, to the left of $a_j$, such that these leaves are distinct for distinct pairs $i$, $j$, and vertices $u$.
   See Figure~\ref{fig:spannedStars} (right) for an illustration.
   \begin{figure}
    \centering
    \includegraphics{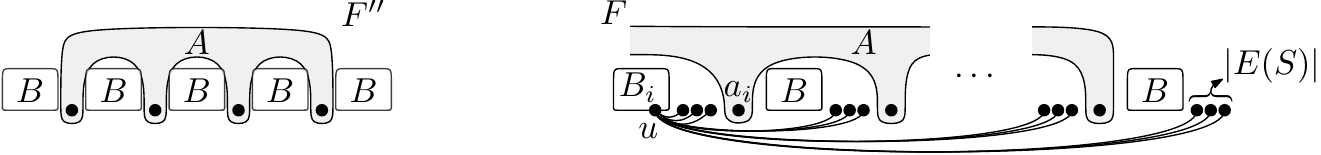}
    \caption[A forest in $\ors(H,H')$ where $H$ is a forest of left and right stars and $H'$ is a forest of monotone paths.]{Forests $A\in\ors(H-S,H')$ and $B\in\ors(H,H'-w)$ forming a forest $F$ in $\ors(H,H')$.
    Here $S$ is a component of $H$ and $w$ is the rightmost vertex in $H'$, where each component of $H$ is a left or a right star and each component of $H'$ is a monotone path.}
    \label{fig:spannedStars}
   \end{figure}
   
   Clearly $F$ is a forest since $F''$ is a forest and all the leaves added in the last step are distinct.
   We claim that $F\in\ors(H,H')$.
   Consider a $2$-coloring of the edges of $F$ without blue copies of $H'$.
   Then there is a red copy $K$ of $H-S$ in $A$.
   Consider some $i\in[n+1]$ such that the position of any vertex $u$ in $B_i$ in $K+u$ corresponds to the position of the center of $S$ in $H$.
   If for some $u\in V(B_i)$ all the edges $uv$ in $F$ where $v$ is to the right of $B_i$ are red, then this red right star together with $K$ contains a red copy of $H$.
   So suppose that for each vertex $u\in V(B_i)$ there is a blue edge $uv$ where $v$ is to the right of $B_i$.
   Then there is no blue copy of $H'-w$ in $B_i$, as otherwise there is a blue copy of $H'$ since $w$ is rightmost and of degree $1$ in $H'$.
   Hence there is a red copy of $H$ in $B_i$.
   Altogether $F\in\ors(H,H')$.
 \end{proofEnum}
 
 Finally we prove the last statement of the theorem (that is, $\ors(H,H')$ contains a partial matching if and only if both $H$ and $H'$ are partial matchings).
 In the proof of Case~\ref{enum:matching} we give a partial matching in $\ors(H,H')$ provided that both $H$ and $H'$ are partial matchings.
 The other way round there is clearly no partial matching in $\ors(H,H')$ provided that one of $H$ or $H'$ is not a partial matching.
\end{proof}

\begin{proof}[Proof of Theorem~\ref{thm:OrderedRamseyPseudoforest}]
  Similar to the proof of Theorem~\ref{thm:OrderedRamseyForest} we shall prove that all pairs of connected ordered graphs that do not have a pseudoforest as an ordered Ramsey graph are covered by Lemma~\ref{lem:noRamseyPseudoforest}.
  For those remaining pairs which do not have a forest as a Ramsey graph by Theorem~\ref{thm:OrderedRamseyForest} we provide explicit constructions of ordered pseudoforests that are ordered Ramsey graphs.
  First we shall show that any pair $(H,H')$ of connected ordered graphs with $\ord(H,H')=1$ where $\ors(H,H')$ contains a pseudoforest is a pair of $K_2$ and a connected ordered proper pseudoforest or both $H$ and $H'$ form a monotone $P_3$.

 First suppose that $H'=K_2$.
 Clearly $H\in\ors(H,K_2)$ is the unique minimal ordered Ramsey graph of $(H,K_2)$.
 Hence $\ord(H,K_2)=1$ and $\ors(H,K_2)$ contains a pseudoforest if and only if $H$ is a proper pseudoforest.
  
 Next let $H$ and $H'$ be connected ordered graphs with $\ord(H,H')=1$ where $\ors(H,H')$ contains a pseudoforest and  suppose that both $H$ and $H'$ contain at least two edges.
 We shall show that both $H$ and $H'$ form a monotone $P_3$.
 Since $H$ and $H'$ are connected they are not matchings.
 Therefore both $H$ and $H'$ are forests due to Lemma~\ref{lem:noRamseyPseudoforest}\ref{enum:properPseudo}.
 If neither $H$ nor $H'$ contains a copy of a monotone $P_3$, then either there is no pseudoforest in $\ors(H,H')$ by Lemma~\ref{lem:noRamseyPseudoforest}\ref{enum:sameOrientaionPseudo} or $\ord(H,H')<1$ by Theorem~\ref{thm:OrderedRamseyForest}\ref{enum:rightForests} or \ref{enum:leftForests}.
 So assume, without loss of generality, that $H$ contains a copy of a monotone $P_3$.
 Then $H'$ does not contain any copy of an ordered $P_4$ by Lemma~\ref{lem:noRamseyPseudoforest}\ref{enum:P4}, that is, $H'$ is a star (since it is connected).
 If $H'$ is a left or a right star then either $H$ contains a left or right star in the reverse direction and there is no pseudoforest in $\ors(H,H')$ by Lemma~\ref{lem:noRamseyPseudoforest}\ref{enum:sameOrientaionPseudo} or $\ord(H,H')<1$ by Theorem~\ref{thm:OrderedRamseyForest}\ref{enum:rightForests} or \ref{enum:leftForests}.
 Hence $H'$ is neither a left or a right star and therefore $H'$ forms a monotone $P_3$, since otherwise there is no pseudoforest in $\ors(H,H')$ by Lemma~\ref{lem:noRamseyPseudoforest}\ref{enum:threeStar}.
 Now the same arguments applied with roles of $H$ and $H'$ switched show that both $H$ and $H'$ form a copy of a monotone $P_3$.
 
 It remains to prove that $\ord(H,H')= 1$ in this case.
 By Theorem~\ref{thm:OrderedRamseyForest} (see also Lemma~\ref{lem:noRamseyForest}\ref{enum:monotonePath}) we have that $\ord(H,H')\geq 1$.
 It remains to give a pseudoforest in $\ors(H,H')$.
 Let $F$ denote the ordered graph obtained from a monotone path $u_1u_2u_3u_4u_5$ by adding an edge $u_2u_4$.
 Clearly $F$ is a proper pseudoforest and we claim that $F\in\ors(H,H')$. 
 Consider a $2$-coloring of the edges of $F$.
 If there are two incident edges on the path $u_1u_2u_3u_4u_5$ of the same color then these form either a red copy of $H$ or a blue copy of $H'$.
 Otherwise the edges $u_1u_2$ and $u_4u_5$ are of different colors.
 Then no matter which color is assigned to the edge $u_2u_4$, there is either a red copy of $H$ or a blue copy of $H'$.
 Altogether $F\in\ors(H,H')$ and thus $\ord(H,H')= 1$.
 \end{proof}

\subsection{Proof of Theorems~\ref{thm:ordRandomRamsey},~\ref{thm:ordDensityRamseyInfinite} and~\ref{thm:randomApplied}}\label{sec:proofOrderedCycle}

We shall use the hypergraph container method due to Saxton and Thomason~\cite{ST15} and independently Balogh~\textit{et al.}~\cite{Containers15} to prove Theorem~\ref{thm:ordRandomRamsey}.
We will follow the arguments from~\cite{NS16} (see also~\cite{KFBook}) using the following result of Saxton and Thomason~\cite{ST16}.
Considering ordered graphs instead of (unordered) graphs only affects the involved constants.
Recall that the \emph{density} of a graph is $m(G) =\max\{\lvert E(G')\rvert/\lvert V(G')\rvert\mid G'\subseteq G\}$ and that the \emph{$2$-density} is $m_2(G) =\max\{(\lvert E(G')\rvert-1)/(\lvert V(G')\rvert-2)\mid G'\subseteq G, \lvert V(G')\rvert\geq 3, \lvert E(G')\rvert\geq 1\}$.
For a hypergraph $\cH$ and some integer $\ell$ let $\Delta_\ell(\cH) =\max\{\lvert \cE\rvert \mid \cE\subseteq E(\cH), \lvert \cap_{E\in\cE} E\rvert \geq\ell\}$.
\begin{theorem}[Cor.~1.3~\cite{ST16}]\label{thm:hyperContainerST}
 For all $r\in\bN$ and for any $\epsilon>0$ there is $C>0$ such that for all $r$-uniform hypergraphs $\cH$ with average degree $d>0$ and each $\tau$, $0<\tau\leq 1$, with $\Delta_\ell(\cH)\leq Cd\tau^{\ell-1}$, $2\leq \ell\leq r$, the following holds.
 There is a function $f:2^{V(\cH)}\to 2^{V(\cH)}$ such that for each independent set $I$ in $\cH$ there is $S\subseteq V(\cH)$ with
 \begin{enumerate} 
  \item $S\subseteq I\subseteq f(S)$,
  \item $\lvert S\rvert \leq \tau \lvert V(\cH)\rvert $,
  \item $\lvert E(\cH[f(S)])\rvert  \leq \epsilon \lvert E(\cH)\rvert $.
 \end{enumerate}
\end{theorem}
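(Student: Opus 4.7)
The plan is to follow the \emph{scythe} construction of Saxton and Thomason: exhibit a single deterministic algorithm $\mathrm{Alg}(I)$ which, given any independent set $I \subseteq V(\cH)$, outputs a pair $(S,X)$ with $S \subseteq I \subseteq X$, and then argue that $X$ is in fact determined by $S$ alone, so that we can legitimately define $f(S) := X$. Because the order in which vertices are processed and the decisions made at each step depend only on $\cH$ and on the partial fingerprint built so far, two independent sets that produce the same fingerprint run through the same sequence of decisions and hence end up in the same container.

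The algorithm maintains a shrinking candidate container $X \subseteq V(\cH)$, initialized to $V(\cH)$, and a growing fingerprint $S$, initialized to $\emptyset$. At each step it computes, for every $v \in X \setminus S$, a \emph{generalized degree} $d_X(v)$ that combines the ordinary degree of $v$ in $\cH[X]$ with the higher co-degree information captured by the parameters $\Delta_\ell$ for $2 \leq \ell \leq r$. It selects the vertex $v^\ast$ of largest generalized degree; if $v^\ast \in I$, then $v^\ast$ is adjoined to $S$, and otherwise $v^\ast$ is deleted from $X$. The algorithm halts as soon as the number of edges in $\cH[X]$ falls below $\epsilon|E(\cH)|$, at which point $f(S) := X$ is declared.

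Conclusions (a) and (c) are then immediate: every vertex of $I$ either enters $S$ or is never deleted, so $S \subseteq I \subseteq f(S)$, and the halting rule directly yields $|E(\cH[f(S)])| \leq \epsilon|E(\cH)|$. The heart of the argument, and the main obstacle, is conclusion (b), the fingerprint bound $|S| \leq \tau|V(\cH)|$. Here the hypotheses $\Delta_\ell(\cH) \leq Cd\tau^{\ell-1}$ for $2 \leq \ell \leq r$ feed into an amortized counting argument: a supersaturation-style inequality (in the spirit of Janson or Kruskal-Katona) must be used to show that whenever $\cH[X]$ still carries many edges, the maximum generalized degree is large, so each selection of $v^\ast$ erases a substantial fraction of the currently active edges. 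The delicate point is designing a single priority rule that simultaneously exploits the full collection of hypotheses on $\Delta_\ell$: at every stage \emph{some} uniformity $\ell$ must provide the required progress via the scaling factor $\tau^{\ell-1}$, and the constant $C$ must be chosen large enough (in terms of $\epsilon$ and $r$) that this can be guaranteed throughout the run. Once this amortization is in place, summing the per-step progress over the history of the algorithm yields $|S| \leq \tau|V(\cH)|$.
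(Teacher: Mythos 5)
This theorem is not proved in the paper at all: it is quoted verbatim as Corollary~1.3 of Saxton and Thomason~\cite{ST16}, so the only ``proof'' the paper offers is a citation. Your proposal, by contrast, attempts to sketch a self-contained proof via the scythe/fingerprint algorithm, which is indeed the method Saxton and Thomason use. The framing is sound as far as it goes: the replay argument showing that the container is a function of the fingerprint alone is correct in spirit, and conclusions (a) and (c) do follow immediately from the algorithm's design as you describe it.

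The genuine gap is that conclusion (b), the bound $\lvert S\rvert \leq \tau\lvert V(\cH)\rvert$, is exactly the content of the container theorem, and you do not prove it --- you state that an amortized counting argument ``must be used'' and that a priority rule ``must be designed'' so that some uniformity $\ell$ always provides progress. That is a restatement of the difficulty, not a resolution of it. Two concrete problems with the sketch as written: first, a single max-degree vertex need not be incident to any fixed fraction of the surviving edges (its degree is at most $\Delta_1$, which is typically a vanishing fraction of $\lvert E(\cH)\rvert$), so the claim that each selection ``erases a substantial fraction of the currently active edges'' cannot be taken per step and the halting rule does not obviously trigger within $\tau\lvert V(\cH)\rvert$ selections from $I$. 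Second, the actual Saxton--Thomason argument is not a single greedy pass: it proceeds by induction on the uniformity $r$, building containers for $r$-uniform hypergraphs out of containers for auxiliary $(r-1)$-uniform ``link'' hypergraphs, and a single round only reduces the edge count by a constant factor, which is why reaching $\epsilon\lvert E(\cH)\rvert$ requires iterating the basic container lemma on the order of $\log(1/\epsilon)$ times (with $C$ absorbing the losses). None of this machinery appears in your outline, so the proposal should be regarded as a correct high-level description of the known proof strategy rather than a proof; for the purposes of this paper the appropriate step is simply to cite~\cite{ST16}.
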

We shall also use a corollary to the so-called Small-Subgraph-Theorem of Erd\H{o}s and R\'enyi~\cite{ErdRenyi} and Bollob{\'a}s~\cite{BollSmallSubgraphs}.
\begin{theorem}[\cite{BollSmallSubgraphs},\cite{ErdRenyi}]\label{thm:smallSubgraphRandom}
 Let $H$ be a graph with at least one edge. The probability that a random graph $G(n,p)$ contains a copy of $H$ tends to $0$ (as $n\to\infty$) if $pn^{1/m(H)}\to 0$ (as $n\to\infty$).
\end{theorem}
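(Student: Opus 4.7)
The plan is to use the first moment method. Let $H^*$ be a subgraph of $H$ that attains the maximum in the definition of $m(H)$, so $|E(H^*)|/|V(H^*)|=m(H)$. Since any copy of $H$ in $G(n,p)$ contains a copy of $H^*$, it suffices to show that with probability tending to $1$ the random graph $G(n,p)$ contains no copy of $H^*$. I would let $X$ denote the number of (labeled) copies of $H^*$ in $G(n,p)$ and apply Markov's inequality in the form $\Pr[X\geq 1]\leq \mathbb{E}[X]$.

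Next I would estimate $\mathbb{E}[X]$ by a routine counting argument: the number of injective maps $V(H^*)\to[n]$ is at most $n^{|V(H^*)|}$, and for each such map the probability that all $|E(H^*)|$ edges of $H^*$ appear in $G(n,p)$ is exactly $p^{|E(H^*)|}$. Hence
\[
\mathbb{E}[X]\;\leq\; n^{|V(H^*)|}\, p^{|E(H^*)|} \;=\; \bigl(n\cdot p^{m(H)}\bigr)^{|V(H^*)|}.
\]
Since $pn^{1/m(H)}\to 0$ is equivalent to $n\cdot p^{m(H)}\to 0$ (the two expressions differ only by raising to the positive power $m(H)$), the right-hand side tends to $0$. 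Therefore $\Pr[G(n,p)\supseteq H^*]\to 0$, and a fortiori $\Pr[G(n,p)\supseteq H]\to 0$.

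There is essentially no genuine obstacle here, since the argument is the classical first-moment proof: the only substantive point is the choice of the densest subgraph $H^*$. If one had worked with $H$ itself and computed $\mathbb{E}[X]$ for copies of $H$, the bound would read $n^{|V(H)|}p^{|E(H)|}$, which tends to $0$ only under the stronger assumption $pn^{|V(H)|/|E(H)|}\to 0$. Passing to the densest subgraph is what sharpens the threshold from the average density of $H$ to $m(H)$, and this is the single idea that makes the proof work in the stated generality.
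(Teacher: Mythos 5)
Your proof is correct and complete: the paper does not prove this statement but simply cites it as the classical Small-Subgraph Theorem of Erd\H{o}s--R\'enyi and Bollob\'as, and your argument is precisely the standard first-moment proof of that result. The one substantive idea --- passing to a densest subgraph $H^*$ so that $\mathbb{E}[X]\leq (np^{m(H)})^{|V(H^*)|}\to 0$ --- is handled correctly, and the reduction from containing $H$ to containing $H^*$ is valid since $H^*\subseteq H$.
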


The following lemma is an analog of Corollary 2.2. from~\cite{NS16}.
We include a proof for ordered graphs for completeness.
\begin{lemma}\label{lem:superRamsey2}
 Let $H$ be an ordered graph, $\sigma=\orn(H,H,H)$, $\epsilon=\tfrac{1}{4}\sigma^{-\sigma}$, $\delta=\lvert E(H)\rvert \tfrac{1}{2}\sigma^{-\sigma}$, and $n\geq \sigma$.
 If $E_1$, $E_2\subseteq E(K_n)$ with $E_i$ inducing at most $\epsilon n^{\lvert V(H)\rvert }$ copies of $H$, $i\in[2]$, then $\lvert E_1\cup E_2\rvert \leq (1-\delta) \binom{n}{2}$.
\end{lemma}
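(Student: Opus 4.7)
The plan is to argue by contraposition. Assume $\lvert E_1\cup E_2\rvert>(1-\delta)\binom{n}{2}$, equivalently $\lvert E_3\rvert<\delta\binom{n}{2}$ for $E_3:=E(K_n)\setminus(E_1\cup E_2)$, and write $v:=\lvert V(H)\rvert$. I would then $3$-color $E(K_n)$ by assigning color $1$ to $E_1$, color $2$ to $E_2\setminus E_1$, and color $3$ to $E_3$, and show that the total number of monochromatic copies of $H$ forced by this coloring exceeds what the three color classes can possibly contain.

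For the lower bound, I would apply a standard supersaturation argument based on $\sigma=\orn(H,H,H)$: every $\sigma$-subset of $V(K_n)$ induces an ordered $K_\sigma$ and hence contains a monochromatic copy of $H$, and double-counting pairs (monochromatic copy, containing $\sigma$-subset) shows that the total number $\mathcal{M}$ of monochromatic copies satisfies $\mathcal{M}\geq\binom{n}{v}/\binom{\sigma}{v}$. Since $\binom{n}{v}/n^v$ is non-decreasing in $n$ and $n\geq\sigma\geq v$, this becomes $\mathcal{M}\geq n^v/\sigma^v\geq n^v/\sigma^\sigma=4\epsilon n^v$.

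For the upper bound, the monochromatic copies in colors $1$ and $2$ are copies of $H$ contained in $E_1$ and in $E_2\setminus E_1\subseteq E_2$ respectively, so by hypothesis they contribute at most $2\epsilon n^v$ in total. The essential step is to bound the number $N_H(E_3)$ of color-$3$ copies. Since $K_n$ is complete, each $v$-subset of $V(K_n)$ corresponds to exactly one order-preserving copy of $H$; consequently every fixed edge of $K_n$ lies in at most $\binom{n-2}{v-2}$ copies of $H$. Double-counting copy-edge incidences inside $E_3$ then yields $\lvert E(H)\rvert\,N_H(E_3)\leq\lvert E_3\rvert\binom{n-2}{v-2}$, and substituting $\lvert E_3\rvert<\delta\binom{n}{2}$, using the identity $\binom{n}{2}\binom{n-2}{v-2}=\binom{v}{2}\binom{n}{v}$, and applying $\binom{n}{v}\leq n^v/v!$ gives $N_H(E_3)<\epsilon n^v$ after a short calculation in which the factor $\lvert E(H)\rvert$ in $\delta$ cancels the one on the left.

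Combining the three bounds gives $\mathcal{M}<3\epsilon n^v$, contradicting $\mathcal{M}\geq 4\epsilon n^v$ from the supersaturation step. The main subtlety I anticipate is obtaining the tight bound $\binom{n-2}{v-2}$ on copies of $H$ through a given edge: avoiding the extraneous factor of $\lvert E(H)\rvert$ that a naive union bound over the edges of $H$ would introduce is precisely what allows the $\lvert E(H)\rvert$ appearing in the definition of $\delta$ to be absorbed exactly.
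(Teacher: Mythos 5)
Your proposal is correct and follows essentially the same argument as the paper: both use that every $\sigma$-subset contains a monochromatic copy of $H$ under the $3$-partition $(E_1,E_2,E(K_n)\setminus(E_1\cup E_2))$, bound the copies in $E_1$ and $E_2$ by hypothesis, and convert between the number of copies in the remaining class and its number of edges via an edge--copy incidence count. You merely run the argument contrapositively and use the exact bound $\binom{n-2}{v-2}$ where the paper uses the cruder $n^{t-2}$; both suffice with the given $\epsilon$ and $\delta$.
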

\begin{proof}
 Let $t=\lvert V(H)\rvert $, $E=E_1\cup E_2$, and $E'=E(K_n)\setminus E$.
 Then for each set of $\sigma$ vertices in $K_n$ there is a copy of $H$ with all edges in $E_1$, all edges in $E_2$, or all edges in $E'$.
 Note that each copy of $H$ in $K_n$ is contained in at most $n^{\sigma-t}$ such $\sigma$-sets.
 Thus there are at least $\tbinom{n}{\sigma} n^{t-\sigma}$ copies of $H$ in $K_n$ with all edges in one of $E_1$, $E_2$, or $E'$.
 Therefore $E'$ induces at least $\tbinom{n}{\sigma} n^{t-\sigma} - 2\epsilon n^{t} \geq (\sigma^{-\sigma}-2\epsilon) n^t = \tfrac{1}{2}\sigma^{-\sigma} n^t$ copies of $H$.
 Since each edge in $E'$ is contained in at most $n^{t-2}$ copies of $H$, there are at least $\lvert E(H)\rvert \tfrac{1}{2}\sigma^{-\sigma} n^t / n^{t-2} = \lvert E(H)\rvert \tfrac{1}{2}\sigma^{-\sigma} n^2\geq \delta \binom{n}{2}$ edges in $E'$.
 Thus $\lvert E\rvert \leq (1-\delta)\binom{n}{2}$.
\end{proof}
%
%
The following lemma is implicitly contained in~\cite{NS16} for unordered graphs.
\begin{lemma}[Ramsey Containers~\cite{NS16}]\label{lem:orderedRamseyCont}
 Let $H$ be an ordered graph without isolated vertices.
 Then there are constants $N_0$, $C'$, $\delta>0$, and a function $g$ mapping ordered graphs to ordered graphs such that for each $n$, $n\geq N_0$, and each $F\not\in\ors(H)$ on vertex set $[n]$  there is an ordered graph $P$ with
 \begin{enumerate} 
  \item $V(P)$, $V(g(P))\subseteq[n]$,
  \item $P\subseteq F\subseteq g(P)$,
  \item $\lvert E(P)\rvert \leq C' n^{2-1/m_2(H)}$,
  \item $\lvert E(g(P))\rvert  \leq (1-\delta)\binom{n}{2}$.
 \end{enumerate} 
\end{lemma}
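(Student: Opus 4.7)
The plan is to apply the hypergraph container theorem (Theorem~\ref{thm:hyperContainerST}) to the $r$-uniform hypergraph $\cH$ on vertex set $E(K_n)$ (the edge set of the ordered complete graph on $[n]$) whose hyperedges are the edge sets of ordered copies of $H$ in $K_n$, and then to merge the two resulting fingerprints using the supersaturation Lemma~\ref{lem:superRamsey2}. An independent set of $\cH$ is precisely an $H$-free subgraph of $K_n$, so if $F \not\in \ors(H)$ then any red/blue colouring of $E(F)$ without a monochromatic copy of $H$ decomposes $E(F)$ into two disjoint independent sets $I_1, I_2$ of $\cH$. This is precisely the input format that Lemma~\ref{lem:superRamsey2} is designed to control.

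First I would verify the hypotheses of Theorem~\ref{thm:hyperContainerST} for $\tau = c_0 n^{-1/m_2(H)}$. Routine counting of ordered copies of $H$ in $K_n$ gives $|V(\cH)| = \binom{n}{2}$, $|E(\cH)| = \Theta(n^{|V(H)|})$, and average degree $d = \Theta(n^{|V(H)|-2})$; the co-degree at level $\ell$ is dominated by $O(n^{|V(H)|-|V(H')|})$ taken over subgraphs $H' \subseteq H$ with $|E(H')| \geq \ell$, and the definition of $m_2(H)$ rearranges this into $\Delta_\ell(\cH) \leq C d \tau^{\ell-1}$ for $c_0$ large enough. Setting $\sigma := \orn(H,H,H)$ and choosing $\epsilon$ in Theorem~\ref{thm:hyperContainerST} so that $\epsilon |E(\cH)| \leq \tfrac{1}{4}\sigma^{-\sigma} n^{|V(H)|}$ then produces a function $f : 2^{E(K_n)} \to 2^{E(K_n)}$ such that every independent $I$ in $\cH$ has a fingerprint $S \subseteq I$ with $|S| \leq \tau\binom{n}{2} = O(n^{2-1/m_2(H)})$, $I \subseteq f(S)$, and $f(S)$ inducing at most $\tfrac{1}{4}\sigma^{-\sigma} n^{|V(H)|}$ copies of $H$ in $K_n$.

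Given $F \not\in \ors(H)$, I would then apply the previous step to $I_1$ and $I_2$ to obtain fingerprints $S_1, S_2$ and containers $f(S_1), f(S_2)$. I would let $P$ be the ordered graph on $[n]$ with edge set $S_1 \cup S_2$ (carrying a canonical marking recording which edges come from $S_1$ and which from $S_2$) and set $g(P) := f(S_1) \cup f(S_2)$. Properties (a) and (b) are immediate; (c) follows with $C' = 2c_0$ because $S_1$ and $S_2$ are disjoint; and (d) is exactly Lemma~\ref{lem:superRamsey2} applied with $E_i := f(S_i)$, since each container induces at most $\tfrac{1}{4}\sigma^{-\sigma} n^{|V(H)|}$ copies of $H$ by construction.

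The main technical step is the co-degree calculation that converts the graph-theoretic parameter $m_2(H)$ into the analytic hypothesis of Theorem~\ref{thm:hyperContainerST}; this mirrors the unordered argument of~\cite{NS16}, with the passage to ordered graphs affecting only the constants (each unordered copy of $H$ in $K_n$ corresponds to at most $|V(H)|!$ ordered copies, so both $|E(\cH)|$ and the co-degrees change by a bounded factor). A secondary subtlety is that $P$ is required to be an ordered graph rather than an edge-coloured one, so a canonical decomposition-from-$P$ rule must be fixed to make $g$ a genuine function; equivalently, one interprets $P$ together with its $2$-edge-marking, which only multiplies the number of admissible $P$'s by a factor $2^{|E(P)|} \leq 2^{O(n^{2-1/m_2(H)})}$ --- harmless for the intended union-bound in the proof of Theorem~\ref{thm:ordRandomRamsey}.
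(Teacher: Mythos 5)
Your proposal follows essentially the same route as the paper: the same hypergraph $\cH$ on $E(K_n)$ whose hyperedges are copies of $H$, the same co-degree verification via $m_2(H)$ to invoke Theorem~\ref{thm:hyperContainerST} with $\tau=\Theta(n^{-1/m_2(H)})$, and the same merging of the two fingerprints and containers via Lemma~\ref{lem:superRamsey2} to define $P$ and $g(P)$. Your remark that $g$ is only well-defined once one records (or enumerates over) the partition of $P$ into $S_1$ and $S_2$ is a legitimate minor point that the paper glosses over, and your fix --- absorbing the extra $2^{\lvert E(P)\rvert}$ factor into the union bound --- is exactly the standard and harmless one.
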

\begin{proof}
 Consider some (large) $n$ and an ordered complete graph $K$ on vertex set $[n]$.
 Let $\cH=\cH_n$ be a hypergraph with vertex set $E(K)$ that contains an edge $E\subseteq E(K)$ if and only if $E$ forms a copy of $H$ in $K$.
 Observe that there is a $1$-$1$ correspondence between subsets of $V(\cH)$ and ordered graphs on vertex set $[n]$ (without isolated vertices).
 Let $t=\lvert V(H)\rvert $, $r=\lvert E(H)\rvert $.
 Then $\cH$ is $r$-uniform, $\lvert V(\cH)\rvert =\binom{n}{2}$, $\lvert E(\cH)\rvert =\binom{n}{t}$, and its average degree is $d(\cH)=\frac{r\lvert E(\cH)\rvert }{\lvert V(\cH)\rvert }$.
 Let $\delta$ and $\epsilon$ be given by Lemma~\ref{lem:superRamsey2} and let $C\leq 1$ be given by Theorem~\ref{thm:hyperContainerST} for $r$ and $\epsilon$ as defined here.
 Further let $\tau = \frac{t^t}{C} n^{-1/m_2(H)}$ and let $v(\ell)=\min\{\lvert V(H')\rvert \mid H'\subseteq H, \lvert E(H')\rvert =\ell\}$, for $\ell\in[r]$.
 Finally choose $N_0\geq \max\{\orn(H),t\}$ such that $\frac{t^t}{C} N_0^{-1/m_2(H)}\leq1$ and consider $n\geq N_0$.
 Then $d(\cH)>0$ and for each $\ell$, $2\leq \ell\leq r$,
 \begin{align}
  m_2(H) &= \max_{\substack{H'\subseteq H\\\lvert V(H')\rvert \geq 3}}\tfrac{\lvert E(H')\rvert -1}{\lvert V(H')\rvert -2} \geq \max_{H'\subseteq H,\lvert E(H')\rvert =\ell}\tfrac{\ell-1}{\lvert V(H')\rvert -2} = \tfrac{\ell-1}{v(\ell)-2},\label{eq:m2}\\
  \Delta_\ell(\cH) &\leq \tbinom{n-v(\ell)}{t-v(\ell)} \leq n^{t-v(\ell)} \overset{(\ref{eq:m2})}{\leq} n^{t-2-\frac{\ell-1}{m_2(H)}} \leq  \frac{t^t\binom{n}{t}}{\binom{n}{2}} n^{-\frac{\ell-1}{m_2(H)}}\nonumber\\
  &\leq \tfrac{r\lvert E(\cH)\rvert }{\lvert V(\cH)\rvert } C \left(\tfrac{t^t}{C}\right)^{\ell-1}n^{-\frac{\ell-1}{m_2(H)}} =  C d \tau^{\ell-1}.\nonumber
 \end{align}
 Due to Theorem~\ref{thm:hyperContainerST} there is a function $f:2^{V(\cH)} \to 2^{V(\cH)}$ such that for each independent set $I$ of $\cH$ there is $S\subseteq V(\cH)$ with $S\subseteq I\subseteq f(S)$, $\lvert S\rvert \leq \tau \lvert V(\cH)\rvert $, and $\lvert E(\cH[f(S)])\rvert  \leq \epsilon \lvert E(\cH)\rvert $.
 Now consider some $F\not\in\ors(H)$ on $n$ vertices and a $2$-coloring of the edges of $F$ with no monochromatic copy of $H$.
 Then the color classes form independent sets $I_1$ and $I_2$ in $\cH$ (since $H$ has no isolated vertices).
 As argued above there are sets $S_1$, $S_2\subseteq V(\cH)$ with $S_i\subseteq I_i\subseteq f(S_i)$, $\lvert S_i\rvert \leq \tau \lvert V(\cH)\rvert  \leq \frac{t^t}{C} n^{2-1/m_2(H)}$, and $\lvert E(\cH[f(S_i)])\rvert  \leq \epsilon \lvert E(\cH)\rvert  \leq \epsilon n^t$, $i=1$, $2$.
 Due to the latter condition and Lemma~\ref{lem:superRamsey2} we have $\lvert f(S_1)\cup f(S_2)\rvert \leq (1-\delta)n^2$.
 The statement of the lemma follows with $C'=2\frac{t^t}{C}$, $P$ being the graph formed by $S_1\cup S_2$, and $g(P)$ being the graph formed by $f(S_1)\cup f(S_2)$.
\end{proof}

\begin{proof}[Proof of Theorem~\ref{thm:ordRandomRamsey}]
	Let $H$ be an ordered graph which is not a partial matching, that is, $m_2(H)>\frac{1}{2}$.
	We shall show that there is a constant $c$ such that with $p=cn^{-1/m_2(H)}$ a random graph $G(n,p)$  is an ordered Ramsey graph of $H$ with probability tending to $1$ as $n$ tends to infinity.	
	First suppose that $H$ has no isolated vertices.
	Let $N_0$, $C'$, $\delta>0$ be constants and let $g$ be a function given by Lemma~\ref{lem:orderedRamseyCont}.
  Let $\cP$ denote the set of ordered graphs $P$ with $V(P)\subseteq [n]$, $\lvert E(P)\rvert \leq C' n^{2-1/m_2(H)}$, and no isolated vertices.
  Note that we can assume that $\lvert E(g(P))\rvert \leq (1-\delta)\binom{n}{2}$ for each $P\in\cP$.  
  For sufficiently large $c$ the probability that there is some $P\in\cP$ with $P\subseteq G(n,p)\subseteq g(P)$ is at most
  \begin{align}
  \sum_{P\in\cP} P(P\subseteq G(n,p)\subseteq g(P)) &\leq \quad\sum_{P\in\cP} p^{\lvert E(P)\rvert } (1-p)^{\delta\binom{n}{2}}\nonumber\\[5pt]
  &=\sum_{i=0}^{\floor{C' n^{2-1/m_2(H)}}}\binom{\binom{n}{2}}{i} p^i (1-p)^{\delta\binom{n}{2}} \label{eq:cycleInf1}\\[5pt]
  &\leq\sum_{i=0}^{\floor{C' n^{2-1/m_2(H)}}} \left(\tfrac{en^2p}{i}\right)^i e^{-p\delta\binom{n}{2}}\label{eq:cycleInf2}\\[5pt]
  &\leq C'n^2 \left(\tfrac{en^2p}{C' n^{2-1/m_2(H)}}\right)^{C' n^{2-1/m_2(H)}} e^{-p\delta\binom{n}{2}}\label{eq:cycleInf3}\\[5pt]
  &\leq C'n^2 \left(\tfrac{ec}{C'}\right)^{C' n^{2-1/m_2(H)}} e^{-(c\delta/4) n^{2-1/m_2(H)}}\nonumber\\[5pt]
  &=C'n^2 e^{(\overbrace{\scriptstyle C'+C'\ln(c)-C'\ln(C')-(c\delta/4)}^{<0}) n^{\overbrace{\scriptscriptstyle 2-1/m_2(H)}^{>0}}}\nonumber\\[5pt]
  &\longrightarrow 0\enskip (n\to \infty).\nonumber
  \end{align}
  Here equality~\ref{eq:cycleInf1} holds since for each $i$ there are $\binom{\binom{n}{2}}{i}$ graphs on $i$ edges in $\cP$.
  Inequality~\ref{eq:cycleInf2} holds since $\binom{\binom{n}{2}}{i}\leq (en^2/i)^i$.
  Finally inequality~\ref{eq:cycleInf3} holds since for any fixed $r>0$ the function $(r/x)^x$ of $x$ is increasing for $0<x\leq r$ (note that its derivative is $(r/x)^x(\ln(r/x)-1)$) and $C' n^{2-1/m_2(H)} \leq ecn^{2-1/m_2(H)} = en^2p$.
  If $n\geq N_0$ and $G(n,p)\not\in\ors(H)$, then by Lemma~\ref{lem:orderedRamseyCont} there is some $P\in\cP$ with $P\subseteq G(n,p)\subseteq g(P)$.
  Thus the calculation above shows that $G(n,p)\in\ors(H)$ with probability tending $1$ as $n\to\infty$.
  
  Now suppose that $H$ contains $a$ isolated vertices and let $H'$ be obtained from $H$ by removing all isolated vertices.
  Note that $m_2(H)=m_2(H')$.
  Let $c=c(H')$ denote a constant such that with $p=cn^{-1/m_2(H)}$ we have $G(n,p)\in\ors(H')$ with probability tending $1$ as $n\to\infty$ which exists as argued before.
  Further let $c'=(a+2)^{1/m_2(H)}c$, $p'=c'n^{-1/m_2(H)}$, and consider a random graph $F=G(n,p')$ with vertex set $[n]$.
  We claim that $F\in\ors(H)$ with probability tending $1$ as $n\to\infty$.
  Indeed, consider $U(n)=\{(a+1)k\mid 1\leq k\leq \frac{n-a}{a+1}\}\subseteq [n]$ and the subgraph $F'$ of $F$ with vertex set $U(n)$.
  Let $\tilde{n}=|U(n)|=\lfloor (n-a)/(a+1)\rfloor\geq n/(a+2)$.
  Then $p'\geq c'((a+2)\tilde{n})^{-1/m_2(H)} = c\tilde{n}^{-1/m_2(H)}$.
  So $F'$ behaves like $G(\tilde{n},c\tilde{n}^{-1/m_2(H)})$ and therefore $F'\in\ors(H')$ with probability tending $1$ as $n\to\infty$ by the choice of $c$.
  Finally observe that there are $a$ vertices in $F$ to the left of $U(n)$, $a$ vertices to the right of $U(n)$, and $a$ vertices between any pair of vertices from $U(n)$.
  So any monochromatic copy of $H'$ in $F'$ yields a monochromatic copy of $H$ in $F$.
  Therefore $F\in\ors(H)$ with probability tending $1$ as $n\to\infty$.
\end{proof}

\begin{proof}[Proof of Theorem~\ref{thm:ordDensityRamseyInfinite}]
 Let $H$ be an ordered graph such that $m(F)>m_2(H)$ for each $F\in\ors(H)$.
 Then $H$ is not a partial matching by Theorem~\ref{thm:OrderedRamseyForest}.
 Let $c$ denote the constant given by Theorem~\ref{thm:ordRandomRamsey}, that is, with $p=cn^{-1/m_2(H)}$ a random graph $G(n,p)$  is an ordered Ramsey graph of $H$ with probability tending to $1$ as $n$ tends to infinity.
 Let  $p=cn^{-1/m_2(H)}$, $F=G(n,p)$, and fix some integer $t>0$.
 We shall prove that $F\in\ors(H)$ and $F[V]\not\in\ors(H)$ for each $t$-subset $V\subseteq V(F)$ with probability tending to $1$ as $n$ tends to infinity.
 Hence there is a minimal ordered Ramsey graph contained in $F$ on more than $t$ vertices.
 Since $t$ is arbitrary there are minimal ordered Ramsey graphs of $H$ of arbitrarily large order and hence $H$ is Ramsey infinite.

 Consider $n$ sufficiently large such that $p\leq 1$.
 Let $\uo{F}$ denote the underlying (unordered) graph of $F$, and let $\cF$ denote the set of all ordered graphs in $\ors(H)$ with $t$ vertices.
 We show that with high probability $\uo{F}$ does not contain the underlying graph of any member of $\cF$ as a subgraph.
 Consider some fixed $F'\in\cF$.
 We have $m(F') > m_2(H)$ by assumption.
 Therefore \[pn^{1/m(F')}=n^{1/m(F')-1/m_2(H)}\longrightarrow 0\quad (n\to\infty).\]
 So with high probability $\uo{F}$ does not contain the underlying graph of $F'$ as a subgraph by Theorem~\ref{thm:smallSubgraphRandom}.
 In particular $F$ does not contain $F'$ as an ordered subgraph.
 This shows that with probability tending to $1$ (as $n\to\infty$) $F$ does not contain any member of $\cF$ as a subgraph, since $\cF$ is a finite set.
 So with probability tending to~$1$ (as $n\to\infty$) $F[V]\not\in\ors(H)$ for each $t$-subset $V\subseteq V(F)$.

  Altogether we see that for sufficiently large $n$ there is an ordered graph $F$ with $F\in\ors(H)$ and $F[V]\not\in\ors(H)$ for each $t$-subset $V\subseteq V(F)$.
  Therefore there are arbitrarily large minimal ordered Ramsey graphs of $H$ and hence $H$ is Ramsey infinite.
\end{proof}

\begin{proof}[Proof of Theorem~\ref{thm:randomApplied}]
 Let $H$ be a Ramsey finite ordered graph.
 Then there is $F\in\ors(H)$ with $m(F)\leq m_2(H)$ by Theorem~\ref{thm:ordDensityRamseyInfinite}.
 Therefore $m_2(H)\leq 1$ by Theorem~\ref{thm:avgDegRamsey} and thus $m(F)\leq 1$, that is, $H$ is a forest  and $F$ is a pseudoforest.
 Hence $H$ does not contain any ordered copy of $P_4$ by Lemma~\ref{lem:noRamseyPseudoforest}~\ref{enum:sameOrientaionPseudo} and~\ref{enum:P4}, that is, $H$ is a star forest.
 Moreover each star on three edges in $H$ is either a right star or a left star by Lemma~\ref{lem:noRamseyPseudoforest}~\ref{enum:threeStar}.
 Finally consider the components of $H$ with at least two edges that are not monotone paths.
 Then either all these components are left stars or all these components are right stars by Lemma~\ref{lem:noRamseyPseudoforest}~\ref{enum:sameOrientaionPseudo}. 
\end{proof}



\subsection{Proof of Theorem~\ref{thm:intervalUnionFinite}}\label{sec:intervalUnion}
 Let $s$ and $t$ be positive integers and let $H_1,\ldots,H_s$, $H'_1,\ldots,H'_t$ be loosely connected ordered graphs such that $(H_i,H'_j)$ is Ramsey finite for all $i\in[s]$, $j\in[t]$.
 In particular neither of $H_1,\ldots,H_s$, $H'_1,\ldots,H'_t$ is an isolated vertex.
 Further let $H=H_1\sqcup\cdots\sqcup H_s$ and $H'=H'_1\sqcup\cdots\sqcup H'_t$.
 In order to prove that $(H,H')$ is Ramsey finite we shall show that each minimal ordered Ramsey graph of $(H,H')$ is a member of the following finite family of ordered graphs. 
 Let $\cF_s^t$ denote the set of all ordered graphs that are isomorphic to a (not necessarily disjoint) union of ordered graphs $F_i^j$, $i\in[s]$, $j\in[t]$, where $F_i^j$ is a minimal ordered Ramsey graph of $(H_i,H_j)$, and for each $i\in[s]$, $j\in[t-1]$, we have $F_i^j\prec F_{i}^{j+1}$, and for each $i\in[s-1]$, $j\in[t]$, we have $F_i^j\prec F_{i+1}^{j}$.
 See Figure~\ref{fig:segmentUnion} for an illustration in the case $s=t=3$.
 For some graph $A\in \cF_s^t$ that is isomorphic to such a union of graphs $F_i^j$, $i\in[s]$, $j\in[t]$, let $A_i^j$ denote the copy of $F_i^j$ in $A$.
 \begin{figure}
  \centering
  \includegraphics{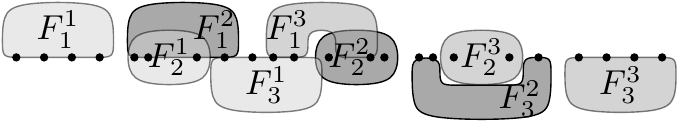}
  \caption[A Ramsey graph for $(H_1\sqcup H_2\sqcup H_3,H'_1\sqcup H'_2\sqcup H'_3)$.]{A Ramsey graph for $(H_1\sqcup H_2\sqcup H_3,H'_1\sqcup H'_2\sqcup H'_3)$ where $F_i^j\in\ors(H_i,H'_j)$, $i$, $j=1$, $2$, $3$. The subgraphs of the same color correspond to constant $j$, the subgraphs on the same horizontal layer (above, on, respectively below the line of vertices) correspond to constant $i$.}
  \label{fig:segmentUnion}
 \end{figure}
 Note that $\cF_s^t$ is finite.
  We claim that $F\in\ors(H,H')$ if and only if $F$ contains some member of $\cF_s^t$.
 
 \medskip
 
 Consider a $2$-coloring of the edges of some $A\in\cF_s^t$.
 We shall prove that there is a red copy of $H$ or a blue copy of $H'$ by induction on $s$ and $t$.
 If $s=1$, then $A=A_1^1\sqcup\cdots\sqcup A_1^t$ where $A_1^j\in\ors(H,H'_j)$ for each $j\in[t]$.
 If $t=1$, then $A=A_1^1\sqcup\cdots\sqcup A_s^1$ where $A_i^1\in\ors(H_i,H')$ for each $i\in[s]$.
 In both cases it is easy to see that there is either a red copy of $H$ or a blue copy of $H'$.
 Hence $A\in\ors(H,H')$.
 Suppose that $s$, $t>1$.
 Let $A'$ denote the subgraph of $A$ formed by all subgraphs $A_i^j$ with $(i,j)\neq (s,t)$.
 Then $A'$ contains some member of $\cF_{s-1}^t$ and some member of $\cF_{s}^{t-1}$.
 By induction, $A$ is in $\ors(H,H'_1\sqcup\cdots\sqcup H'_{t-1})$ and in $\ors(H_1\sqcup\cdots\sqcup H_{s-1},H')$.
 If there is no red copy of $H$ and no blue copy of $H'$ in $A'$, then there is a red copy of $H_1\sqcup\cdots\sqcup H_{s-1}$ and a blue copy of $H'_1\sqcup\cdots\sqcup H'_{t-1}$.
 Observe that $A'\prec A_s^t$ in $A$.
 Since $A_s^t\in\ors(H_s,H'_t)$ there is a red copy of $H$ or a blue copy of $H'$ in either case.
 Thus $A\in\ors(H,H')$.
 
 \medskip
 
 Now consider an ordered graph $F$ that does not contain any member of $\cF_s^t$.
 We shall prove that $F\not\in\ors(H,H')$ by induction on $s$ and $t$.
 Consider the case $s=1$.
 If $t=1$ then $F$ does not contain any minimal ordered Ramsey graph of $(H_1,H'_1)=(H,H')$.
 Clearly $F\not\in\ors(H,H')$.
 Suppose that $t>1$ and let $p$ denote the rightmost vertex in $F$ such that $\{q\in V(F)\mid p\leq q\}$ induces a copy of some graph from $\ors(H,H'_t)$ in $F$.
 Let $F_\ell$ and $F_r$ denote the subgraphs of $F$ induced by all vertices strictly to the left respectively strictly to the right of $p$.
 Then $F_\ell$ does not contain any member of $\cF_1^{t-1}$.
 By induction on $t$ there is a coloring of the edges of $F_\ell$ without red copies of $H$ or blue copies of $H'_1\sqcup\cdots\sqcup H'_{t-1}$.
 Moreover there is a coloring of $F_r$ without red copies of $H$ or blue copies of $H'_{t}$.
 Color all remaining edges blue (that is, all edges incident to $p$ and all edges between $F_\ell$ and $F_r$).
 Then there is no red copy of $H$ since $H=H_1$ is loosely connected (so in particular not an isolated vertex).
 Moreover each blue copy of $H'_t$ contains some vertex $q$ with $q\leq p$ and thus there is no blue copy of $H'$.
 This shows that $F\not\in\ors(H,H')$.
 If $t=1$ and $s>1$, then $F\not\in\ors(H,H')$ due to symmetric arguments.
 
 So suppose that $s$, $t>1$.
 If $F$ does not contain any member of $\cF_{s-1}^t$, then $F\not\in\ors(H_1\sqcup\cdots\sqcup H_{s-1},H')$ by induction on $s$.
 Hence $F\not\in\ors(H,H')$.
 So assume that $F$ contains some member of $\cF_{s-1}^t$.
 Let $A$ denote such a copy where for each $i\in[s-1]$ and $j\in[t]$ the rightmost vertex of $A_i^j$ is leftmost among all such copies.
 Note that we can simultaneously choose such leftmost vertices for all $i\in[s-1]$ and $j\in[t]$. 
 If $F$ does not contain any subgraph of the form $B^1\sqcup\cdots\sqcup B^t$ where $B^j\in\ors(H_s,H'_j)$ for each $j\in[t]$, then the arguments from case $s=1$ show that $F\not\in\ors(H_s,H')$ and thus $F\not\in\ors(H,H')$.
 Otherwise let $B=B^1\sqcup\cdots\sqcup B^t$ denote such a subgraph of $F$ where for each $j\in[t]$ the leftmost vertex of $B^j$ is rightmost among all such subgraphs.
 Again we can simultaneously choose such rightmost vertices for all $j\in[t]$. 
 Since $A\cup B\not\in\cF_s^t$ there is $j\in[t]$ such that $B^j$ is not to the right of $A_{s-1}^j$.
 See Figure~\ref{fig:goodColIntervalUnion} for an illustration.
 \begin{figure}
  \centering
  \includegraphics{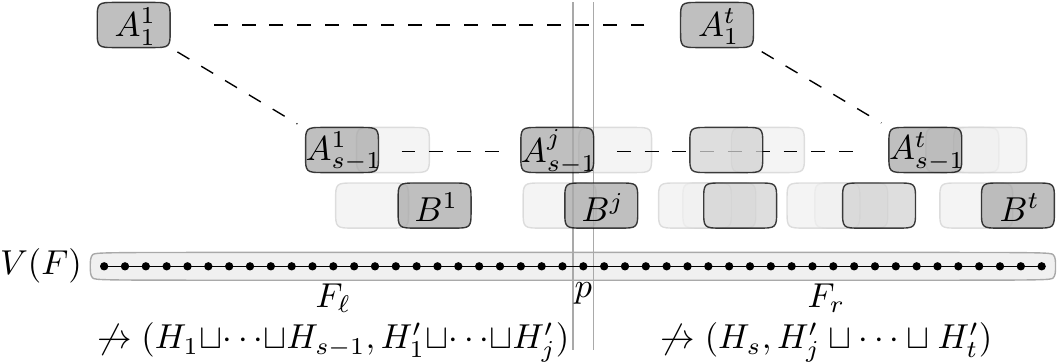}
  \caption[Illustration in proof of Theorem~\ref{thm:intervalUnionFinite}.]{An illustration of the proof of Theorem~\ref{thm:intervalUnionFinite}. Here $A_i^j\in\ors(H_i,H'_j)$, $B^j\in\ors(H_s,H'_j)$, and $F$ does not contain any element from $\cF_s^t$ . Therefore a combination of colorings of $F_\ell$ and $F_r$ shows that $F\not\to(H_1\sqcup\cdots\sqcup H_s,H'_1\sqcup\cdots\sqcup H'_t)$.}
  \label{fig:goodColIntervalUnion}
 \end{figure}
 Let $p$ denote the rightmost vertex of $A_{s-1}^j$ and let $F_\ell$ and $F_r$ denote the subgraphs of $F$ induced by all vertices strictly to the left respectively strictly to the right of $p$.
 By the choice of $A$ the graph $F_\ell$ does not contain any member of $\cF_{s-1}^j$.
 By the choice of $B$ the graph $F_r$ does not contain any subgraph of the form $B'_j\sqcup\cdots\sqcup B'_t$ where $B'_{j'}\in\ors(H_s,H_{j'})$, $j\leq j'\leq t$.
 Therefore there is a coloring of the edges of $F_\ell$ without red copies of $H_1\sqcup\cdots\sqcup H_{s-1}$ or blue copies of $H'_1\sqcup\cdots\sqcup H'_j$ by induction on $s$.
 Similarly there is a coloring of the edges of $F_r$ without red copies of $H_s$ or blue copies of $H'_j\sqcup\cdots\sqcup H'_t$ due to the arguments from case $s=1$.
 Color all remaining edges red (that is, all edges incident to $p$ and all edges between $F_\ell$ and $F_r$).
 Then each red copy of $H_1\sqcup\cdots\sqcup H_{s-1}$ contains some vertex $q$ with $p\leq q$.
 Hence there is no red copy of $H$.
 Similarly we see that there is no blue copy of $H'$ since all edges incident to $p$ and all edges between $F_\ell$ and $F_r$ are red (and $H'_j$ is not an isolated vertex).
 Altogether $F\not\in\ors(H,H')$.
 This shows that each minimal ordered Ramsey graph $F$ of $(H,H')$ contains some $F'\in\cF_s^t$.
 Since we proved in the beginning that $F'\in\ors(H,H')$, we have that $F=F'$ and $F\in\cF_s^t$.
 Thus $(H,H')$ is Ramsey finite.\qed

\subsection{Proof of Theorems~\ref{thm:unavoidNoRamseyForest} and~\ref{thm:UnavoidInfinite}}\label{sec:unavoidInf}


\begin{proof}[Proof of Theorem~\ref{thm:unavoidNoRamseyForest}]
 As $H$ and $H'$ are $\chi$-unavoidable there is an integer $k$ such that each ordered graph of chromatic number at least $k$ contains a copy of $H$ and a copy of $H'$.
 For each $t\geq 3$ let $G_t$ be an ordered graph of chromatic number at least $k^2$ and girth at least $t$.
 First we prove that $G_t\in\ors(H,H')$.
 Consider a $2$-coloring of the edges of $G_t$.
 Since $\chi(G_t)\geq k^2$ one of the color classes forms an ordered graph of chromatic number at least $k$.
 Therefore this subgraph contains a (monochromatic) copy of both of $H$ and $H'$.
 Thus $G_t\in\ors(H,H')$.
  
 For each $t\geq 3$ let $G_t'$ be a minimal ordered Ramsey graph of $(H,H')$ that is a subgraph of $G_t$.
 For each $t\geq 3$ the graph $G_t'$ contains a cycle, since $\ors(H,H')$ contains no forest.
 As $G_t$ has girth at least $t$, infinitely many of the graphs $G_t'$ are not isomorphic.
 Thus $(H,H')$ is Ramsey infinite.
\end{proof}


For the proof of Theorem~\ref{thm:UnavoidInfinite} it remains to consider pairs of $\chi$-unavoidable connected ordered graphs that are not covered by Theorem~\ref{thm:unavoidNoRamseyForest}, i.e., that have a forest as an ordered Ramsey graph.
Recall that all pairs of ordered graphs having a forest as a Ramsey graph are characterized by Theorem~\ref{thm:OrderedRamseyForest}.
We give explicit constructions of infinitely many ordered Ramsey graphs for these pairs using so-called determiners as building blocks.
We introduce determiners and give explicit constructions of such ordered graphs next.
The concept of (unordered) determiners is used by Burr~\textit{et al.}~\cite{BNR85} to construct Ramsey graphs with certain properties.

 Let $\vec{S}_p$ denote a right star with $p$ edges.
 In the proof we shall frequently use the following unions of ordered graphs $G$ and $G'$.
 Recall that the \emph{intervally disjoint union} $G\sqcup G'$ is a vertex disjoint union of $G$ and $G'$ where all vertices of $G$ are to left of all vertices of $G'$.
 Further the \emph{concatenation} $G\con G'$ of two ordered graphs $G$ and $G'$ is obtained from $G\sqcup G'$ by identifying the rightmost vertex in the copy of $G$ with the leftmost vertex in the copy of $G'$.
 For an integer $b>0$ we shall write $\sqcup_b G$ and $\con_b G$ for an intervally disjoint union respectively a concatenation of $b$ copies of $G$.
 Finally $\hang{a}{b}{G}$ denotes the ordered graph obtained from $\vec{S}_a\sqcup(\sqcup_b G)$ by connecting the leftmost vertex of this union with the leftmost vertex of each of the $b$ copies of $G$.
 See Figure~\ref{fig:toolbox} for an illustration.
 \begin{figure}
  \centering
  \includegraphics{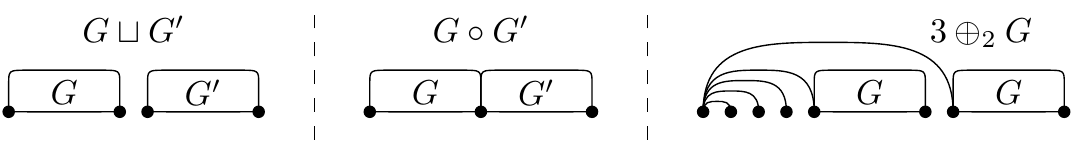}
  \caption{Three different types of unions of ordered graphs.}
  \label{fig:toolbox}
 \end{figure}

 Given integers $i$, $j$ with $1\leq j\leq i$, and a sequence $d=d_1,d_2,\ldots$ of positive integers, let $H_i(d)=\vec{S}_{d_i}\con\cdots\con \vec{S}_{d_1}$ and $H_i^j(d)=\vec{S}_{d_i}\con\cdots\con \vec{S}_{d_j}$ be right caterpillars.
 For convenience let $H_0(d)$ and $H_i^{i+1}(d)$ each denote a single vertex ordered graph.
 Consider a right star $H$ and a sequence $d=d_1,d_2,\ldots$ of positive integers.
 A \emph{left determiner for $(H,H_i(d))$}, $i\geq 0$, is an ordered graph $F$ such that
 \begin{itemize}
  \item for any $2$-coloring of the edges of $F$ without red copies of $H$ there is a blue copy of $H_i(d)$ that contains the leftmost vertex of $F$, and
  
  \item there is a \emph{good} coloring of the edges of $F$, that is, a $2$-coloring without red copies of $H$ or blue copies of $H_{i+1}(d)$ such that there is a unique blue copy of $H_i(d)$ that contains the leftmost vertex of $F$ and this copy is induced and isolated in the blue subgraph.
 \end{itemize}
 A \emph{right determiner for $(H,H_i^j(d))$}, $1\leq j\leq i+1$, is an ordered graph $F$ such that
 \begin{itemize}
  \item for any $2$-coloring of the edges of $F$ without red copies of $H$ or blue copies of $H_i(d)$ there is a blue copy of $H_i^j(d)$ that contains the rightmost vertex of $F$, and
  
  \item there is a \emph{good} coloring of the edges of $F$, that is, a $2$-coloring without red copies of $H$ or blue copies of $H_i(d)$ such that there is a unique blue copy of $H_i^j(d)$ that contains the rightmost vertex of $F$ and this copy is induced and isolated in the blue subgraph.
 \end{itemize}
 See Figure~\ref{fig:determiners} for examples of determiners.
 \begin{figure}
  \centering
  \includegraphics{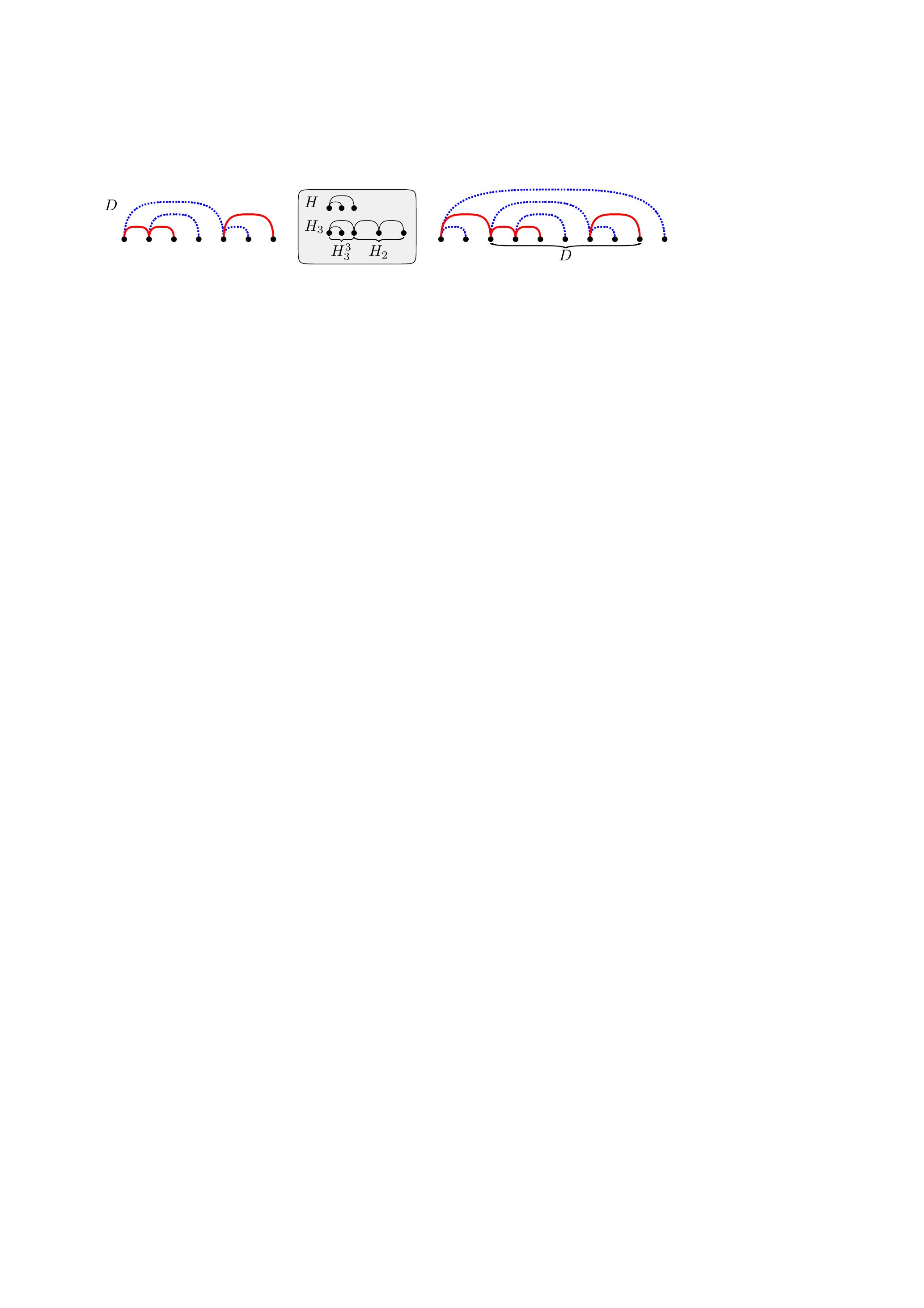}
  \caption[Left and right determiners with respective good colorings.]{A left determiner for $(H,H_2(d))$ (left) and a right determiner for $(H,H_3^3(d))$ (right) with respective good colorings (red solid  and blue dashed edges). Here $H$ is a right star on two edges and $d_1=d_2=1$ and $d_3=2$.}
  \label{fig:determiners}
 \end{figure}
 \begin{lemma}\label{lem:determiners}
  Let $H$ be a  right star with at least one edge, let $d$ be a sequence of positive integers, and let $i$, $j$ be non-negative integers with $j\leq i+1$.
  Then there is a left determiner for $(H,H_i(d))$ and, if $j\geq 2$, there is a right determiner for $(H,H_i^j(d))$.
 \end{lemma}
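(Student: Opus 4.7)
The plan is to construct both families by a nested induction: first the left determiners $L_i$ are built inductively on $i$, and then the right determiners $R_i^j$ are built inductively on $i+1-j$, using the already-constructed $L_{j-1}$'s as building blocks. Throughout write $p:=|E(H)|$, so that $H=\vec{S}_p$.

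For the left determiners I would take $L_0$ to be a single isolated vertex (which trivially satisfies both conditions since $H_0(d)$ is a single vertex) and set
\[L_{i+1}\;:=\;\hang{d_{i+1}-1}{p}{L_i}.\]
Thus the leftmost vertex $v_0$ of $L_{i+1}$ is joined to $d_{i+1}-1$ pendant leaves and to the leftmost vertices of $p$ disjoint copies $L_i^{(1)},\ldots,L_i^{(p)}$ of $L_i$. For the embedding property, in any coloring without a red $\vec{S}_p$ the vertex $v_0$ has at most $p-1$ red right-edges among its $d_{i+1}+p-1$ right-neighbors, so at least one edge $v_0$-to-$L_i^{(k)}$ must be blue (the $d_{i+1}-1$ leaves alone cannot absorb all $\geq d_{i+1}$ blue edges). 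Taking $k^*$ rightmost such, the inductive left-determiner property yields a blue $H_i(d)$ at the leftmost vertex of $L_i^{(k^*)}$, and the remaining $\geq d_{i+1}-1$ blue right-edges of $v_0$ furnish the leaves of a blue $\vec{S}_{d_{i+1}}$ at $v_0$, completing a blue $H_{i+1}(d)$ rooted at $v_0$. For the good coloring I would color the $d_{i+1}-1$ leaf-edges of $v_0$ and the edge $v_0$-to-$L_i^{(p)}$ blue, the remaining edges $v_0$-to-$L_i^{(k)}$ red, and install a good coloring inside each $L_i^{(k)}$. Then $v_0$ has exactly $d_{i+1}$ blue and $p-1$ red right-edges, which makes the blue $H_{i+1}(d)$ at $v_0$ unique, induced, and isolated (isolation of the $H_i(d)$-copy inside $L_i^{(p)}$ and the red edges cutting the other $L_i^{(k)}$-copies off from $v_0$ ensure these properties), and no blue $H_{i+1}(d)$ arises elsewhere because the only other blue structures are the isolated $H_i(d)$-copies in $L_i^{(1)},\ldots,L_i^{(p-1)}$, and no non-leftmost vertex has $d_{i+1}$ blue right-neighbors followed by a further blue $H_i(d)$.

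For the right determiners I would set $R_i^{i+1}$ to be a single vertex and, for $j\leq i$, define
\[R_i^j\;:=\;R_i^{j+1}\;\sqcup\;\{\ell_1,\ldots,\ell_{d_j-1}\}\;\sqcup\;L_{j-1}^{(1)}\sqcup\cdots\sqcup L_{j-1}^{(p-1)}\;\sqcup\;\{w\},\]
with extra edges joining the rightmost vertex $u$ of $R_i^{j+1}$ to each $\ell_k$, to the leftmost vertex $v_\ell$ of each $L_{j-1}^{(\ell)}$, and to $w$. Consider any coloring without a red $H$ or a blue $H_i(d)$; by the inductive right-determiner property of $R_i^{j+1}$ there is a blue $H_i^{j+1}(d)$ ending at $u$. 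The crucial step is to force $u-w$ to be blue: were it red, then among $u$'s $d_j+p-2$ non-$w$ right-edges at least $d_j$ would be blue, and since the $p-1$ $L_{j-1}$-edges lie rightmost and cannot all be red (they alone would already contribute $p-1$ red edges on top of $u-w$), the rightmost non-$w$ blue right-neighbor of $u$ would be some $v_{\ell^*}$; by the left-determiner property a blue $H_{j-1}(d)$ starts there, and stacking it with a blue $\vec{S}_{d_j}$ at $u$ ending at $v_{\ell^*}$ and the blue $H_i^{j+1}(d)$ at $u$ would produce a blue $H_i(d)=H_i^{j+1}(d)\con\vec{S}_{d_j}\con H_{j-1}(d)$, a contradiction. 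With $u-w$ blue, any $d_j-1$ further blue right-edges of $u$ extend the blue $H_i^{j+1}(d)$ by a blue $\vec{S}_{d_j}$ at $u$ with $w$ as rightmost leaf, yielding the desired blue $H_i^j(d)$ at $w$. For the good coloring I would take $u-\ell_k$ and $u-w$ blue, all $u-v_\ell$ red, and install good colorings inside $R_i^{j+1}$ and each $L_{j-1}^{(\ell)}$. Then $u$ has exactly $d_j$ blue and $p-1$ red right-edges, so the unique blue $\vec{S}_{d_j}$ at $u$ with rightmost $w$ combines with the unique blue $H_i^{j+1}(d)$ at $u$ (induction) into a unique blue $H_i^j(d)$ at $w$; induced-ness and isolation follow since $\ell_k,w$ are pendant, $H_i^{j+1}(d)$ is isolated in $R_i^{j+1}$ inductively, and the red $u-v_\ell$ edges disconnect the $L_{j-1}$-copies from $u$ in the blue subgraph. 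The blue subgraph then decomposes into the $H_i^j(d)$-copy and $p-1$ isolated $H_{j-1}(d)$-copies, each with strictly fewer edges than $H_i(d)$, so no blue $H_i(d)$ arises.

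The main technical obstacle I anticipate is the simultaneous calibration of the hub vertex's right-degree so that two conflicting constraints hold at once: in the worst-case adversarial coloring the hub must retain enough blue right-edges to extend the inductive copy by a new star $\vec{S}_{d_{i+1}}$ (respectively $\vec{S}_{d_j}$), while in the explicit good coloring the hub must have strictly fewer than $p$ red right-edges to avoid a red $\vec{S}_p$. Fixing the right-degree to exactly $d_{i+1}+p-1$ (respectively $d_j+p-1$), using pendant leaves for the ``inert'' star-leaves (so induced-ness and isolation come for free) and, in the right-determiner step, using $L_{j-1}$-copies as forbidden-$H_i(d)$ triggers to force the critical edge $u-w$ to be blue, is how the two constraints get reconciled.
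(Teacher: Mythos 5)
Your construction and argument coincide with the paper's: the left determiner is built as $\hang{(d_i-1)}{|E(H)|}{D_{\leq i-1}}$ and the right determiner as $D_{\geq j+1}\con\hang{(d_j-1)}{|E(H)|-1}{D_{\leq j-1}}$ with an extra rightmost vertex joined to the hub, and both the forced-blue-edge argument (via the rightmost blue neighbour landing in a left-determiner copy) and the good colorings (exactly $|E(H)|-1$ red edges at the hub, good colorings inside the sub-determiners) are the same. This is essentially the paper's proof, up to an index shift and a uniform treatment of the base case.
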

 \begin{proof}
  Let $s=\lvert E(H)\rvert $, $d=d_1,d_2,\ldots$, $H_i=H_i(d)$, and $H_i^j=H_i^j(d)$.
  First we shall construct a left determiner for $(H,H_i)$ by induction on $i$.
  It is easy to see that a single vertex graph is left determiner for $(H,H_0)$ and a right star on $s+d_1-1$ edges is a left determiner for $(H,H_1)$.
  Suppose that $i\geq 2$ and let $D_{\leq i-1}$ denote a left determiner for $(H,H_{i-1})$, which exists by induction.
  Let $D=\hang{(d_i-1)}{s}{D_{\leq i-1}}$, let $D_1,\ldots,D_s$ denote the copies of $D_{\leq i-1}$ in $D$, and let $u_t$ denote the leftmost vertex of $D_t$, $1\leq t\leq s$, see Figure~\ref{fig:leftDeterminer} for an illustration.
  
%
 \begin{figure}
  \centering
  \includegraphics{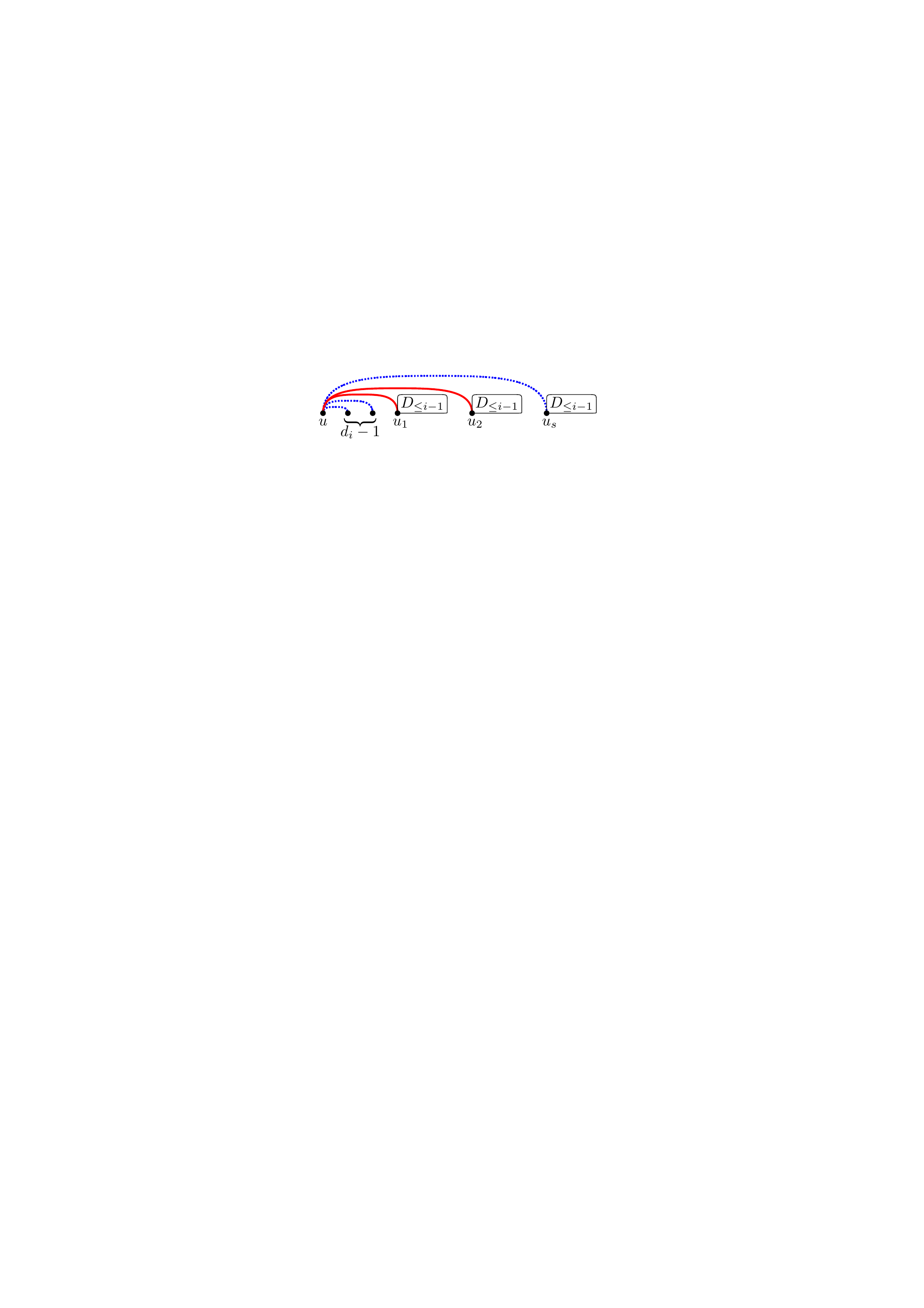}
  \caption[A left determiner with a good coloring.]{A left determiner for $(H,H_i)$ with $s=\lvert E(H)\rvert =3$, $d_i=3$, and a good coloring of its edges. 
   Here $D_{\leq i-1}$ is a left determiner for $(H,H_{i-1})$.}
  \label{fig:leftDeterminer}
 \end{figure}

  To see that $D$ is a left determiner for $(H,H_i)$ consider a $2$-coloring of its edges without a red copy of $H$.
  Let $u$ denote the leftmost vertex of $D$.
  Since $u$ is of degree $d_i+s-1$, it is incident to at least $d_i$ blue edges.
  Consider the rightmost vertex $v$ such that the edge $uv$ is blue.
  Then $v=u_t$ for some $t\in[s]$, and hence $v$ is leftmost in a blue copy of $H_{i-1}$.
  Thus there is a blue copy of $H_{i}$ that contains $u$.
  
  It remains to give a good coloring of the edges of $D$.
  Recall that $D_{\leq i-1}$ has a good coloring, that is, a $2$-coloring of its edges without red copies of $H$ or blue copies of $H_i$ such that the blue copy of $H_{i-1}$ that contains the leftmost vertex of $D_{\leq i-1}$ is induced and isolated in the blue subgraph.
  Color $D_1,\ldots, D_s$ according to such a coloring.
  Moreover color some $s-1$ of the edges $uu_t$, $1\leq t\leq s$, in red, and all other edges incident to $u$ in blue.
  See Figures~\ref{fig:determiners} and~\ref{fig:leftDeterminer}.
  Clearly there is no red copy of $H$ and no blue copy of $H_{i+1}$.
  Moreover the blue copy of $H_{i}$ that contains $u$ is  induced and isolated in the blue subgraph.
  Hence this coloring is a good coloring of $D$.
  Thus $D$ is a left determiner for $(H,H_i)$.
  
  \medskip
  
  Next we shall construct right determiners for $(H,H_i^j)$ for $i$, $j$ with $i+1\geq j\geq 2$.
  Consider some fixed $i\geq 1$.
  We shall construct a right determiner for $(H,H_i^j)$ by induction on $i+1-j$ (that is, ``from left to right''), using the already constructed left determiners.
  We see that a single vertex graph is a right determiner for $(H,H_i^{i+1})$.
  This forms the base case $i+1-j=0$, that is, $j=i+1$.
  Suppose that $i+1-j> 0$, that is, $j\leq i$.
  Let $D_{\geq j+1}$ denote a right determiner for $(H,H_i^{j+1})$, which exists by induction, and let $D_{\leq j-1}$ denote a left determiner for $(H,H_{j-1})$ (note that $j-1\geq 1$).
  Let $D'=\hang{(d_j-1)}{s-1}{D_{\leq j-1}}$, let $x$ denote the leftmost vertex in $D'$, let $D_1,\ldots,D_{s-1}$ denote the copies of $D_{\leq j-1}$ in $D'$, and let $v_t$ denote the leftmost vertex in $D_t$, $1\leq t\leq s-1$.
  Obtain an ordered graph $D$ from $D_{\geq j+1}\con D'$ by adding a vertex $y$ to the right of all other vertices and an edge between $x$ and $y$.
  See Figure~\ref{fig:rightDeterminer}.
 \begin{figure}
  \centering
  \includegraphics{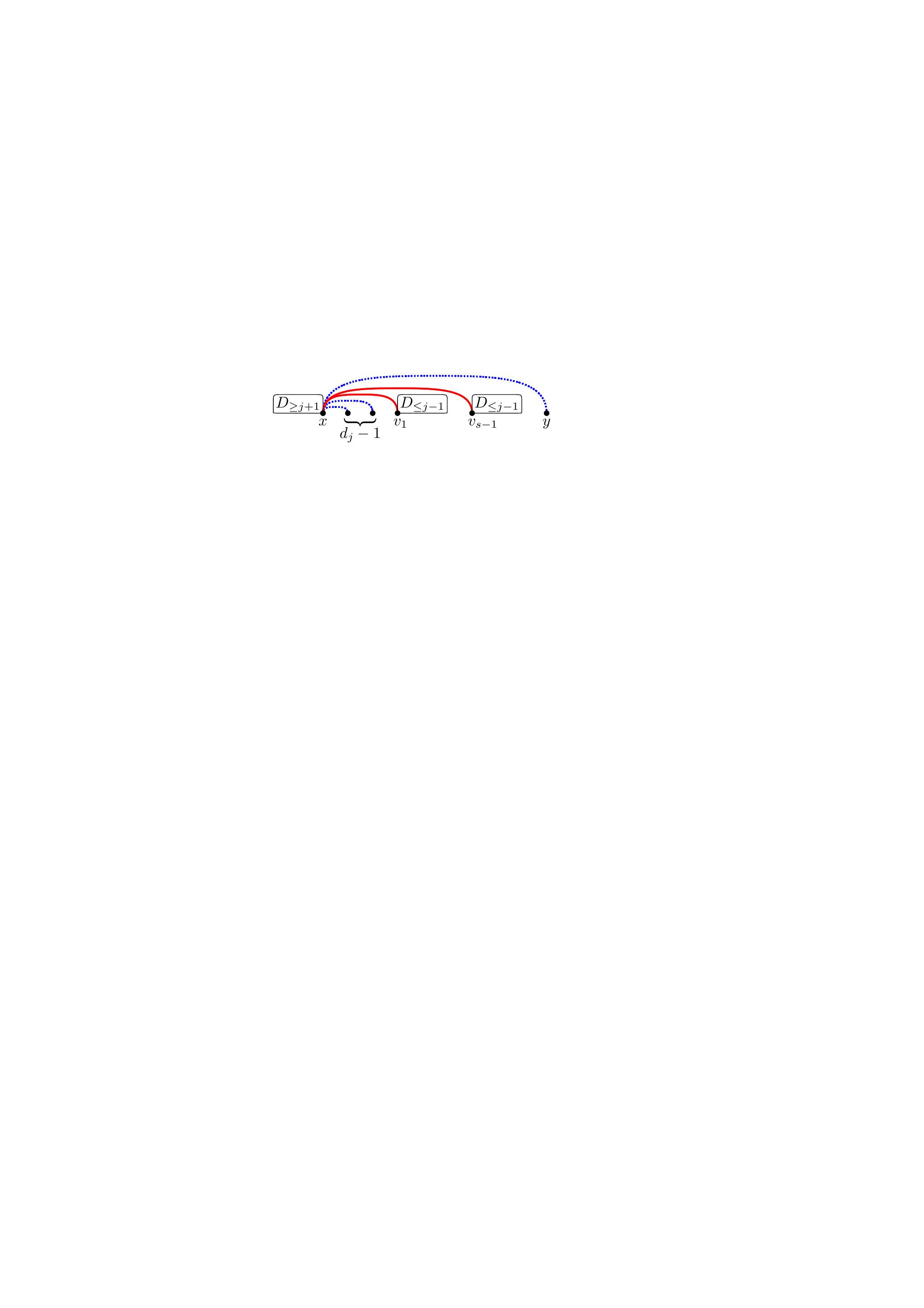}
  \caption[A right determiner with a good coloring.]{A right determiner for $(H,H_i^j)$ with $s=\lvert E(H)\rvert =3$, $d_j=3$, and a good coloring of its edges.
   Here $D_{\leq j-1}$ is a left determiner for $(H,H_{j-1})$ and $D_{\geq j+1}$ is a right determiner for $(H,H_i^{j+1})$.}
  \label{fig:rightDeterminer}
 \end{figure}

  We claim that $D$ is a right determiner for $(H,H_i^j)$.
  Consider a $2$-coloring of the edges of $D$ without red copies of $H$ or blue copies of $H_i$.
  We shall find a blue copy of $H_i^j$ that contains $y$.
  By construction $x$ is rightmost in a blue copy of $H_i^{j+1}$.
  Moreover $x$ is left endpoint of at least $d_j$ blue edges.
  Assume that the edge $xy$ is red.
  Consider the rightmost vertex $z$ such that the edge $xz$ is blue.
  Then $z=v_t$ for some $t$, $1\leq t\leq s-1$, and hence $z$ is leftmost in a blue copy of $H_{j-1}$.
  Thus there is a blue copy of $H_i$, a contradiction.
  This shows that $xy$ is colored blue and there is a blue copy of $H_i^j$ that contains $y$.
  
  It remains to give a good coloring of $D$.
  Recall that $D_{\geq j+1}$ has a good coloring, that is, a $2$-coloring of its edges without red copies of $H$ or blue copies of $H_i$ such that the blue copy of $H_i^{j+1}$ that contains $x$ is induced and isolated in the blue subgraph.
  Similarly $D_{\leq j-1}$ has a good coloring.
  Color the leftmost copy of $D_{\geq j+1}$ in $D$ and each $D_t$, $1\leq t\leq s-1$, according to such colorings.
  Color the edges $xv_t$ red, $1\leq t\leq s-1$, and all remaining edges with left endpoint $x$ blue.
  See Figures~\ref{fig:determiners} and~\ref{fig:rightDeterminer} for an illustration.
  Clearly there is no red copy of $H$ and no blue copy of $H_i$ (since $j\geq 2$).
  Moreover the blue copy of $H_i^j$ that contains $y$ is  induced and isolated in the blue subgraph.
  Hence $D$ is a right determiner for $(H,H_i^j)$.
 \end{proof}
 
 Finally we need the following structural observation on $\chi$-unavoidable ordered graphs.
 A \emph{bonnet} is an ordered graph on four or five vertices $u_1< u_2\leq u_3< u_4\leq u_5$ with edge set $\{u_1u_2, u_1 u_5, u_3u_4\}$, or on vertices $u_1 \leq u_2< u_3 \leq u_4 < u_5$ with edge set $\{u_1u_5, u_4u_5, u_2u_3\}$.
 Two edges $xy$, $x<y$, and $x'y'$, $x'<y'$, are \emph{crossing} is $x<x'<y<y'$ or $x'<x<y'<y$.
 An ordered path $P=u_1\cdots u_n$ is \emph{tangled} if for a vertex $u_i$, with $1<i<n$, that is either leftmost or rightmost in $P$ there is an edge in the subpath $u_1\cdots u_i$ that crosses an edge in the subpath $u_i\cdots u_n$.
 
 \begin{lemma}\label{lem:leftDeg1}
 Let $G$ be a $\chi$-unavoidable connected ordered graph with at least one edge where each vertex has at most one neighbor to the left (right).
 Then $G$ is a right (left) caterpillar.
 \end{lemma}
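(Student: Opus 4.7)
The plan has three stages. First I establish that $G$ is a tree and identify its candidate spine. Since every vertex has at most one neighbor to its left, orienting each edge of $G$ from left to right produces an orientation in which every in-degree is at most one; any cycle would force its rightmost vertex to have in-degree at least two, so $G$ is acyclic, and together with connectedness this gives that $G$ is a tree. Root $G$ at its leftmost vertex $u_0$, so that each non-root vertex's parent is its unique left neighbor. Let $u_i$ be the rightmost vertex; the unique tree path from $u_0$ to $u_i$ moves only from parent to child (and hence from left to right), so its vertices satisfy $u_0<u_1<\cdots<u_i$, giving a monotone candidate spine.

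Second, I argue that if $G$ fails to be a right caterpillar, then some off-spine vertex violates the caterpillar placement condition, and by replacing it with the topmost off-spine ancestor whose parent lies on the spine we reduce to one of two configurations: (a) an off-spine vertex $v$ whose parent is a spine vertex $u_k$ and which has a child $w$ in the tree; or (b) an off-spine leaf $v$ whose parent $u_k$ is a spine vertex but with $v\in(u_{k+l},u_{k+l+1})$ for some $l\geq1$. Any off-spine vertex with an off-spine parent forces the existence of such an ancestor because its first off-spine ancestor (along the path from $u_0$) has on-spine parent and at least one child, so it falls into case (a).

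Third, in each configuration I exhibit a forbidden ordered substructure. In case (a), if both $v$ and $w$ lie in the same spine interval $(u_k,u_{k+1})$, then the four vertices $u_k<v<w<u_{k+1}$ together with the edges $u_kv$, $vw$, $u_ku_{k+1}$ form a bonnet (form 1 with $u_2=u_3$). Otherwise $w$ is off-spine in some later interval $(u_{k+l},u_{k+l+1})$ with $l\geq1$, and the path $w,v,u_k,u_{k+1},\ldots,u_{k+l+1}$ is tangled: its leftmost vertex $u_k$ is internal, and the edge $vw$ on one side of the pivot crosses the spine edge $u_{k+l}u_{k+l+1}$ on the other side, since $v<u_{k+l}<w<u_{k+l+1}$. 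In case (b), the path $v,u_k,u_{k+1},\ldots,u_{k+l+1}$ is tangled with pivot $u_k$, because the edge $vu_k$ crosses the spine edge $u_{k+l}u_{k+l+1}$ by the same interleaving.

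Since a $\chi$-unavoidable ordered forest contains neither a bonnet nor a tangled path by the characterization of non-$\chi$-unavoidable ordered forests in~\cite{ForbiddenOrderedSubgraphs}, each of the two configurations contradicts the hypothesis that $G$ is $\chi$-unavoidable. Therefore $G$ must be a right caterpillar, and the left-caterpillar statement follows by reversing the vertex ordering. The main obstacle is the case analysis: one must verify that the reduction above captures every way $G$ can fail to be a right caterpillar and that each exhibited subgraph is precisely of the forbidden type demanded by the characterization from~\cite{ForbiddenOrderedSubgraphs}.
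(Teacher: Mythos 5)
Your argument is sound in outline and follows a genuinely different route from the paper's. The paper works locally: it defines \emph{displayed} vertices (vertices not spanned by any edge), carves $G$ into ``potential segments'' between consecutive displayed vertices, and shows each such segment is a right star by extracting a bonnet or tangled path from any violating edge. You instead fix a global candidate spine --- the (necessarily monotone) tree path from the leftmost to the rightmost vertex, after rooting at the leftmost vertex so that each vertex's parent is its unique left neighbour --- and check every edge against it. Both proofs ultimately rest on the same external fact from~\cite{ForbiddenOrderedSubgraphs}, that a $\chi$-unavoidable tree contains no bonnet and no tangled path; your orientation argument for acyclicity is a nice elementary substitute for invoking that citation a second time. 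Your identification of the bonnet $u_k<v<w<u_{k+1}$ with edges $u_kv$, $vw$, $u_ku_{k+1}$ and of the tangled paths pivoting at $u_k$ are all correct.

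There is, however, one sub-case your analysis does not cover. In configuration (a) you tacitly assume that $v$ lies in the interval $(u_k,u_{k+1})$ immediately to the right of its spine parent $u_k$: the bonnet sub-case needs $w<u_{k+1}$, and the tangled sub-case needs $v<u_{k+l}$. But an off-spine vertex $v$ with spine parent $u_k$ may itself lie in a later interval $(u_{k+l},u_{k+l+1})$ with $l\geq 1$ \emph{and} have a child $w$ in that same interval; this situation satisfies your condition (a) but fails the hypothesis of both of its sub-cases, and it escapes condition (b) because there you require $v$ to be a leaf. The repair is immediate and uses nothing new: the edge $u_kv$ already crosses the spine edge $u_{k+l}u_{k+l+1}$ (since $u_k<u_{k+l}<v<u_{k+l+1}$), so the path $v,u_k,u_{k+1},\ldots,u_{k+l+1}$ is tangled exactly as in your configuration (b) --- the leaf hypothesis there is never actually used. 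The cleanest fix is to drop ``leaf'' from (b), handle any off-spine $v$ with spine parent lying beyond $u_{k+1}$ by that argument first, and only then run your two sub-cases of (a) under the now-justified assumption $v\in(u_k,u_{k+1})$. With that adjustment the case analysis is exhaustive and the proof is complete.
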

 \begin{proof}
  Suppose that each vertex in $G$ has at most one neighbor to the left.
  We shall show that $G$ is a right caterpillar.
  Since $G$ is $\chi$-unavoidable and connected it is a tree and contains neither a bonnet nor a tangled path~\cite{ForbiddenOrderedSubgraphs}.
  Call a vertex $a$ of $G$ \emph{displayed} if there is no edge $bc$ in $G$ with $b<a<c$ (such vertices are called inner cut vertices in~\cite{ForbiddenOrderedSubgraphs}).
  Note that a segment of a right caterpillar contains exactly two displayed vertices, namely its leftmost and rightmost vertex.
  We call a subgraph of $G$ a \emph{potential segment} if it is induced by an interval $I$ in $G$ whose leftmost and rightmost vertex are displayed in $G$ and all other vertices in $I$ are not displayed in $G$.
  Let $G'$ be a potential segment.
  We shall show that $G'$ is a right star.
  Note that $G'$ is connected, since $G$ is connected, and a vertex in $G'$ is displayed in $G$ if and only if it is displayed in $G'$.
  Let $u$ denote the leftmost vertex of $G'$ and let $v$ be a neighbor of $u$ in $G'$.
  For the sake of a contradiction assume that there is an edge $vv'$ in $G'$ with $v<v'$.
  By definition of $G'$ the vertex $v$ is not displayed in $G$, and thus not in $G'$.
  Hence there is an edge $xy$ in $G'$ with $u\leq x<v<y$.
  We have $y\neq v'$ since $v'$ has only one neighbor to the left.
  If $u=x$ then $\{u, v, v', y\}$ forms a tangled path in case $y<v'$, and a bonnet in case $v'<y$.
  If $u\neq x$ then consider a path in $G'$ that contains $uv$ and $xy$, which exists since $G'$ is connected.
  Since $uv$ and $xy$ are crossing and each vertex in $G'$ has at most one neighbor to the left we see that this path is tangled.
  In either case there is a contradiction and hence no edge $vv'$ with $v<v'$ exists in $G'$.
  If $v'v$ is an edge in $G'$ with $v'<v$ then $u=v'$ by assumption.
  Altogether $G'$ is a right star.
  This shows that every potential segment of $G$ is a right star.
  Therefore $G$ is a right caterpillar (as $G$ contains at least on edge).
  If each vertex in $G$ has at most one neighbor to the right, then  $G$ is a left caterpillar due to symmetric arguments.
 \end{proof}

 
\begin{proof}[Proof of Theorem~\ref{thm:UnavoidInfinite}] 
 Recall that $(H,H')$ is a Ramsey finite pair of $\chi$-unavoidable connected ordered graphs with at least two edges each.
 First we shall prove that $(H,H')$ is a pair of a right star and a right caterpillar or a pair of a left star and a left caterpillar.
 Then, using the determiners introduced above, we give constructions of infinitely many minimal ordered Ramsey graphs of such pairs if the caterpillar is not almost increasing.

 Since $H$ and $H'$ are connected and $\chi$-unavoidable both $H$ and $H'$ are trees.
 Since $(H,H')$ is Ramsey finite there is a forest in $\ors(H,H')$ due to Theorem~\ref{thm:unavoidNoRamseyForest}.
 Due to Theorem~\ref{thm:OrderedRamseyForest} and since $H$ and $H'$ are connected and have at least two edges each, we assume, without loss of generality, that $H$ is a right star while each vertex of $H'$ has at most one neighbor to the left.
 Since $H'$ is a $\chi$-unavoidable tree, $H'$ is a right caterpillar  due to Lemma~\ref{lem:leftDeg1}.
 Let $d=d_1,\ldots,d_i$ denote the defining sequence of $H'=H_i(d)$.
 Recall that $H'$ is almost increasing if $d_2\leq\cdots\leq d_i$ and, if $i\geq 3$, $d_1\leq d_3$.
 In particular if $H'$ is not almost increasing, then either there is some $j$, $1\leq j\leq i-2$, with $d_j>\max\set{d_{j+1},d_{j+2}}$, or there is some $j$, $i\geq j\geq 3$, with $d_{j-1} > d_{j}$.
 Below we give constructions of infinitely many minimal ordered Ramsey graphs for both cases, showing that $H'$ is indeed almost increasing.
 
 Recall that $H_t=H_t(d)$ and $H_i^{t+1}=H_i^{t+1}(d)$ are the subgraphs of $H_i(d)$ that consist of the $t$ rightmost segments respectively the $i-t$ leftmost segments of $H_i(d)$, $0\leq t\leq i+1$.
 Let $D_{\leq t}$ be a left determiner for $(H,H_t)$, $0\leq t<i$, and let $D_{\geq t}$ be a right determiner for $(H,H_i^t)$, $2\leq t\leq i+1$, which exist due to Lemma~\ref{lem:determiners}.
 
 \begin{proofEnum}[label=\textit{Case \arabic{proofEnumi}}:]
 \item There is $j$, $1\leq j\leq i-2$, with $d_j>\max\set{d_{j+1},d_{j+2}}$.
 We shall prove that $(H,H_i(d))$ is Ramsey infinite by constructing infinitely many minimal Ramsey graphs.  
 Let $a=\max\{d_{j+2},d_{j+1}\}-1$.
 Obtain a graph $\Gamma'$ from $(\hang{a}{\lvert E(H)\rvert -1}{D_{\leq j}})\sqcup D_{\geq j+3}$ by adding an edge between the leftmost and the rightmost vertex.
 Similarly obtain a graph $\Gamma''$ from $(\hang{a}{\lvert E(H)\rvert -1}{D_{\leq j+1}})\sqcup D_{\geq j+3}$ by adding an edge between the leftmost and the rightmost vertex.
 For $n\geq 1$ let $\Gamma_n= D_{\geq j+3}\con\Gamma''\con (\con_n \Gamma')\con D_{\leq i}$.
 See Figure~\ref{fig:unvoidInf1} for an illustration in case $\lvert E(H)\rvert =2$.
\begin{figure}
  \centering
  \includegraphics{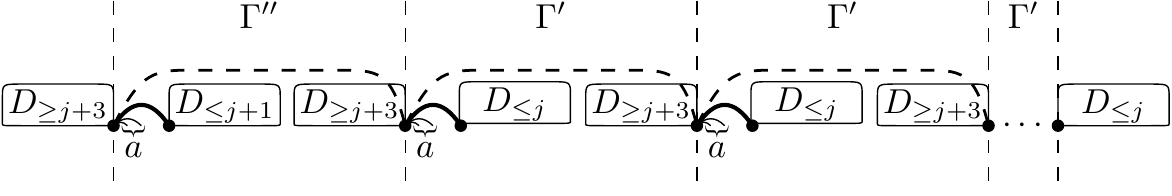}
  \caption[A Ramsey graph for $(H,H_i)$ where $H$ is a right star on two edges and $H_i$ is a right caterpillar.]{A Ramsey graph for $(H,H_i)$ where $H$ is a right star on two edges and $H_i$ is a right caterpillar with at least three segments where $a=\max\{d_{j+2},d_{j+1}\}-1$ and $d_j>a+1$.}
  \label{fig:unvoidInf1}
 \end{figure}
%
 
 
 First we shall prove that $\Gamma_n\to(H,H_i)$.
 We refer to bold and dashed edges like given in Figure~\ref{fig:unvoidInf1}, that is, an edge is dashed if it is the longest edge in the copy $\Gamma''$ or in one of the copies of $\Gamma'$, and an edge is bold if it has the same left endpoint as some dashed edge and its right endpoint is leftmost in a copy of $D_{\leq j+1}$ (in $\Gamma''$) or $D_{\leq j}$ (in $\Gamma'$).
 Consider a $2$-coloring of the edges of $\Gamma_n$ without red copy of $H$.
 Observe that the left endpoint of each dashed edge is rightmost in a blue copy of $H_i^{j+3}$ and left endpoint of at least $a+1\geq d_{j+2}$ blue edges.
 Hence, if a dashed edge is blue, then its right endpoint is rightmost in a blue copy of $H_i^{j+2}$.
  
 First suppose that all dashed edges are blue.
 Consider the rightmost copy $K$ of $\Gamma'$ and its dashed edge $xy$, $x<y$.
 Then $x$ is rightmost in a blue copy of $H_i^{j+2}$ and $y$ is leftmost in a blue copy of $H_{j}$.
 We see that the blue edge $xy$ with $a$ further blue edges in $K$ yields a blue copy of $H_i$.
 
 Now suppose that the dashed edge $uv$, $u<v$, in $\Gamma''$ is red.
 Note that $u$ is rightmost in a blue copy of $H_i^{j+3}$.
 Consider the rightmost vertex $z$ such that the edge  $uz$ is blue.
 Since there are $a+1$ blue edges with left endpoint $u$ (and $uv$ is red), the edge $uz$ is a bold edge.
 Since $a\geq d_{j+2}-1$ and $z$ is the leftmost vertex in a blue copy of $H_{\leq j+1}$, there is a blue copy of $H_i$.
  
 Finally suppose that there is a blue dashed edge whose right endpoint $w$ is incident to a red dashed edge.
 Consider the rightmost vertex $z$ such that the edge  $wz$ is blue.
 Since there are $a+1$ blue edges with left endpoint $w$ (and the dashed edge with left endpoint $w$ is red), the edge $wz$ is a bold edge.
 Recall that $w$ is rightmost in a blue copy of $H_i^{j+2}$.
 Since $a\geq d_{j+1}-1$ and $z$ is leftmost in a blue copy of $H_{j}$, there is a blue copy of $H_i$.
 Altogether $\Gamma_n\to(H,H_i)$.
 
 
 Next we shall show that each minimal Ramsey graph of $(H,H_i)$ that is a subgraph of $\Gamma_n$ contains all dashed edges, that is, contains at least $n+1$ edges.
 Let $\bar{\Gamma}$ be obtained from $\Gamma_n$ by removing some dashed edge $\bar{e}$.
 We construct a coloring of $\bar{\Gamma}$ without red copies of $H$ or blue copies of $H_i$ as follows.
 Note that $\bar{\Gamma}$ consists of two connected components.
 First consider the component that contains the left endpoint of $\bar{e}$.
 Color all bold edges in this component in red, all other edges with left endpoint equal to the left endpoint of some bold edge in blue.
 The remaining edges form vertex disjoint copies of $D_{\geq j+3}$, $D_{\leq j+1}$, and $D_{\leq j}$.
 Color each of these determiners according to a good coloring.
 There is no red copy of $H$ since at most $\lvert E(H)\rvert$ bold edges have a common left endpoint.
 Moreover there is no blue copy of $H_i$ within one of the determiners.
 The blue edges not in one of the determiners form a right caterpillar where each segment has $a+1$ edges.
 Since $a+1<d_j$ and since the blue copies of $H_i^{j+3}$ in the copies of $D_{\geq j+3}$ are induced and isolated, there is no blue copy of $H_i$.
 
 Now consider the component that contains the right endpoint of $\bar{e}$.
 For each vertex $p$ in this component that is the left endpoint of some dashed edge color this dashed edge and $\lvert E(H)\rvert -2$ further edges with left endpoint $p$ in red and all other edges with left endpoint $p$ blue.
 The remaining edges form vertex disjoint copies of $D_{\geq j+3}$ and $D_{\leq j}$.
 Color each of these determiners according to a good coloring.
 Clearly there is no red copy of $H$.
 Each component of the blue subgraph is contained in a copy of $D_{\geq j+3}\con(\hang{a}{\lvert E(H)\rvert -1}{D_{\leq j}})$.
 Similar as before we see that there is no blue copy of $H_i$.
 This shows that $\bar{\Gamma}\not\in\ors(H,H_i)$.
 
 \medskip
 
 For each $n\geq 1$ choose a minimal Ramsey graph of $(H,H')$ contained in $\Gamma_n$.
 The arguments before show that infinitely many of these graphs are pairwise non-isomorphic.
 Thus $(H,H')$ is Ramsey infinite.


 \item There is $j$, $i\geq j\geq 3$, with $d_{j-1} > d_{j}$.
 We shall prove that $(H,H_i)$ is Ramsey infinite by constructing infinitely many minimal Ramsey graphs.
 An illustration of the following construction is given in Figure~\ref{fig:unvoidInf3}.
 Let $\Gamma$ denote an ordered graph obtained from $\vec{S}_{d_j-1}\sqcup D_{\geq j+1}$ by adding an edge between the leftmost and the rightmost vertex (recall that $D_{\geq j+1}$ is a right determiner for $(H,H_{i}^{j+1})$).
 For $n\geq 1$ let $F'_n$ be defined as follows.
 Start with a right determiner $D_{\geq j}$  for $(H,H_{i}^{j})$ and let $x<y$ denote its two rightmost vertices.
 Add a copy of $D_{\geq j+1}$ that has all its vertices to the right of $x$ and has $y$ as its rightmost vertex.
 Call the resulting graph $D$.
 To this graph $D$ concatenate $n$ copies $\Gamma_1,\ldots,\Gamma_n$ of $\Gamma$ and a left determiner $D_{\leq j-1}$ for $(H,H_{j-1})$, one after the other in this order.
 Finally add $d_{j-1}-d_j>0$ isolated vertices and an intervally disjoint union of $\lvert E(H)\rvert -1$ left determiners $D_{\leq j-2}$ for $(H,H_{j-2})$ to the right of all current vertices.
 Altogether
 \[F'_n=D\con(\con_n\Gamma)\con D_{\leq j-1}\sqcup(\sqcup_{d_{j-1}-d_j}K_1)\sqcup(\sqcup_{\lvert E(H)\rvert -1}D_{\leq j-2}).\]
 Let $U$ be the set of isolated vertices and let $W$ denote the set of leftmost vertices of the graphs $D_{\leq j-2}$ added in the last step.
 Let $\gamma_t$ denote the leftmost vertex of $\Gamma_t$ in $F$, $1\leq t\leq n$ and let $\gamma_{n+1}$ be the rightmost vertex of $\Gamma_n$.
 We obtain an ordered graph $F_n$ from $F'_n$ by adding a complete bipartite graph between $U\cup W$ and $\{\gamma_1,\ldots,\gamma_n\}$.
 \begin{figure}
  \centering
  \includegraphics{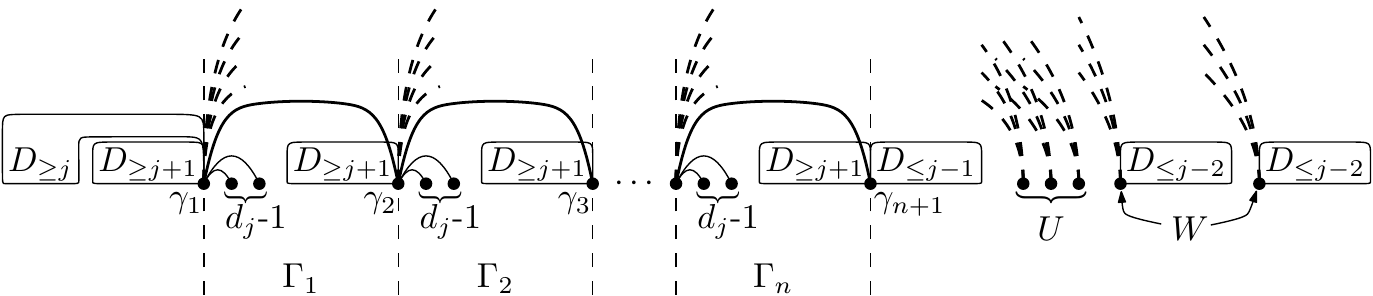}
  \caption[A Ramsey graph for $(H,H_i)$ where $H$ is a right star and $H_i$ is a right caterpillar whose defining sequence contains $d_j<d_{j-1}$ for some $j$, $i\geq j\geq 3$.]{A Ramsey graph for $(H,H_i)$ where $H$ is a right star and $H_i$ is a right caterpillar whose defining sequence contains $d_j<d_{j-1}$ for some $j$, $i\geq j\geq 3$. The dashed edges form a complete bipartite graph.}
  \label{fig:unvoidInf3}
 \end{figure}
 
 \medskip
 
 First we shall prove that $F_n\to(H,H')$.
 For the sake of contradiction consider a $2$-coloring of the edges of $F=F_n$ without red copies of $H$ or blue copies of $H'$. 
 We shall prove that $\gamma_t$ is rightmost in a blue copy of $H_i^j$, $1\leq t\leq n+1$, by induction on $t$.
 For $t=1$ this holds since $\gamma_1$ is rightmost in a right determiner for $H_i^j$.
 Consider $t>1$.
 Since $\gamma_{t-1}$ is left endpoint of $d_j+\lvert U\rvert+\lvert W\rvert=d_{j-1}+\lvert E(H)\rvert -1$ edges there are at least $d_{j-1}$ blue edges with left endpoint $\gamma_{t-1}$.
 Consider the rightmost vertex $z$ with $\gamma_{t-1}z$ colored blue.
 Since $\gamma_{t-1}$ is rightmost in a blue copy of $H_i^j$, $z$ is rightmost in a blue copy of $H_i^{j-1}$.
 Hence $z\not\in W$, since otherwise there is a blue copy of $H_i$ as each vertex in $W$ is leftmost in a blue copy of $H_{j-2}$. 
 Hence all edges between $W$ and $\gamma_{t-1}$ are red and, since $\lvert W\rvert =\lvert E(H)\rvert -1$, all other edges with left endpoint $\gamma_{t-1}$ are blue.
 In particular $\gamma_{t-1}\gamma_t$ and $d_{j}-1$ further edges $\gamma_{t-1}z$, with $\gamma_{t-1}<z<\gamma_{t}$, are blue.
 Since there is a blue copy of $H_i^{j+1}$ with rightmost vertex $\gamma_{t-1}$, $\gamma_t$ is rightmost is a blue copy of $H_i^j$. 
 These arguments show that $\gamma_{n+1}$ is rightmost in a blue copy of $H_i^j$.
 This forms a blue copy of $H_i$ together with a blue copy of $H_{j-1}$ in the left determiner $D_{\leq j-1}$ with leftmost vertex $\gamma_{n+1}$, a contradiction.
 Therefore $F_n\to(H,H')$.
 
 \medskip
 
 Next we shall show that each minimal Ramsey graph of $(H,H')$ that is a subgraph of $F_n$ contains all edges $\gamma_t\gamma_{t+1}$, $1\leq t\leq n$.
 Let $\bar{F}$ be obtained from $F_n$ by removing the edge $\gamma_s\gamma_{s+1}$ for some $s$, $1\leq s\leq n$.
 We construct a coloring of $\bar{\Gamma}$ without red copies of $H$ or blue copies of $H'=H_i$ as follows.
 For each $t\leq s$ color all edges between $\gamma_t$ and $W$ red and all other edges with left endpoint $\gamma_t$ blue.
 For each $t$, $s+1\leq t\leq n$, color the edge $\gamma_t\gamma_{t+1}$ red and all other edges with left endpoint $\gamma_t$ blue.
 The remaining edges are contained in an edge disjoint union of determiners and are colored according to a good coloring of each determiner.
 There are no red copies of $H$ since a good coloring of a determiner has no red copy of $H$ and each $\gamma_t$, $1\leq t\leq n$ is left endpoint of at most $\lvert W\rvert =\lvert E(H)\rvert -1$ red edges.
 Assume that there is a blue copy $\bar{H}$ of $H_i$.
 Consider the unique vertex $u$ in $\bar{H}$ that is contained in a (blue) copy of $H_j$ and a (blue) copy of $H_i^j$.
 Due to the good colorings of the determiners we have $u=\gamma_t$ for some $t\in[n+1]$.
 For each $t\geq s+1$ the vertex $\gamma_t$ is not leftmost in a blue copy of $H_{j}$ since $j\geq 3$.
 Hence there are no blue copies of $H_i$ containing a vertex $\gamma_t$ with $t\geq s+1$.
 Consider the vertices $\gamma_t$ for $t\leq s-1$.
 Each of these is left endpoint of $d_{j-1}$ blue edges, but $\gamma_t\gamma_{t+1}$ is the only such blue edge whose right endpoint has further neighbors to the right.
 Note that there are only $d_j-1$ neighbors $z$ of $\gamma_t$ with $\gamma_t<z<\gamma_{t+1}$.
 Since $d_j<d_{j-1}$ and $j\geq 3$ no vertex $\gamma_t$ with $t\leq s-1$ is leftmost in a blue copy of $H_{j-1}$.
 Therefore no such vertex $\gamma_t$ is leftmost in a blue copy of $H_{j}$, since $\gamma_{t+1}$ is leftmost in a blue copy of $H_{j-1}$ otherwise.
 This shows that there is no blue copy of $H_i$ and $\bar{F}\not\in\ors(H,H_i)$.
 
 \medskip
 
 For each $n\geq 1$ choose a minimal Ramsey graph of $(H,H')$ contained in $F_n$.
 The arguments before show that infinitely many of these graphs are pairwise non-isomorphic.
 Thus $(H,H')$ is Ramsey infinite.\qedhere
 \end{proofEnum}
 \end{proof}


 \subsection{Proof of Theorem~\ref{thm:CaterpillarFinite}}\label{sec:unavoidFinite}

 Recall that $(H,H')$ is a pair of a right star and a right caterpillar or a pair of a left star and a left caterpillar and $d_1,\ldots,d_i$ is the defining sequence of the caterpillar.
 Suppose that $i\leq 2$ or $d_1\leq\cdots\leq d_i$.
 In order to prove that $(H,H')$ is Ramsey finite we shall show that each minimal ordered Ramsey graph of $(H,H')$ is a member of a finite family of ordered graphs defined below.
 Without loss of generality assume that $H$ is a right star with $s$ edges and $H'$ is a right caterpillar.
Let $H_t$ denote the subgraph of $H'$ that consist of the $t$ rightmost segments of $H'$, $0\leq t\leq i$.
 Observe that $H_i=H'$. 
 Recursively define sets $\cF_j$, $1\leq j\leq i$, of ordered graphs as follows.
 Let $\cF_1=\{\vec{S}_{s+d_1-1}\}$ (recall that $\vec{S}_{p}$ is a right star on $p$ edges).
 Consider $j>1$.
 An ordered graph $F$ is in $\cF_j$ if and only if its leftmost vertex $u$ has exactly $s+d_j-1$ neighbors $v_1<\cdots<v_{s+d_j-1}$ and there are (not necessarily disjoint) subgraphs $F_1,\ldots,F_s$ of $F$ with $E(F-u)=\cup_{t=1}^sE(F_t)$, $F_t\in\cF_{j-1}$, and $v_{t+d_j-1}$ is leftmost in $F_t$, $1\leq t\leq s$.
 See Figure~\ref{fig:pointedUnion}.
 \begin{figure}
  \centering
  \includegraphics{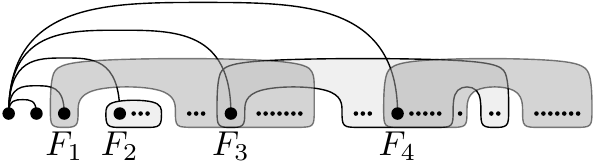}
  \caption[A Ramsey graph for $(H,H_i)$ where $H$ is a right star with four edges and $H_i$ is a right caterpillar with $d_i=2$.]{A Ramsey graph for $(H,H_i)$ where $H$ is a right star with four edges and $H_i$ is a right caterpillar with $d_i=2$. Here $F_{1}$, $F_2$, $F_3$,  $F_4\in\ors(H,H_{i-1})$ such that for any coloring without red copies of $H$ there is a blue copy of $H_{i-1}$ that contains the leftmost vertex of $F_t$, $t=1,2,3,4$. The graphs $F_1,\ldots,F_4$ might share vertices and edges as long as their leftmost vertices are mutually distinct.}
  \label{fig:pointedUnion}
 \end{figure}
%
 Note that for each $j\in[i]$ the set $\cF_j$ is finite.
 We shall show that each minimal ordered Ramsey graph of $(H,H_i)$ is in $\cF_i$.
 Hence $(H,H_i)$ is Ramsey finite.

 First of all observe that for each $j\in[i]$ each graph in $\cF_j$ is in $\ors(H,H_j)$.
 Even more, for each coloring of the edges of some graph $F\in\cF_j$ without red copies of $H$ there is a blue copy of $H_j$ containing the leftmost vertex of $F$.

 \medskip

 If $i=1$, then $H'=H_1$.
 It is easy to see that $F\in\ors(H,H_1)$ if and only if $F$ contains a copy of $\vec{S}_{s+d_1-1}$.
 Therefore $\vec{S}_{s+d_1-1}$ is the only minimal ordered Ramsey graph of $(H,H_1)$.
 In particular each graph in $\ors(H,H_1)$ contains some member of $\cF_1=\{\vec{S}_{s+d_1-1}\}$ and $(H,H_1)$ is Ramsey finite.
 Now consider the case $i=2$ and some ordered graph $F$ that does not contain copies of any member of $\cF_2$.
 We shall give a coloring of the edges of $F$ without red copies of $H$ or blue copies of $H_2$.
 Let $D_1$ denote the set of all vertices in $F$ that are leftmost in a copy of some member of $\cF_1=\{\vec{S}_{s+d_1-1}\}$ in $F$.
 For $u\in V(F)$ let $r(u)$ denote its \emph{right degree}, that is, the number of edges $uv$ in $F$ with $u<v$.
 Note that $u\in D_1$ if and only if $r(u)\geq s+d_1-1$.
 We color the edges of $F$ in three steps.
 In the first step color each edge $uv$, with $u<v$, red if $v\in D_1$, and there are $d_2-1$ vertices $z$ with $u< z< v$. 
 In the second step color arbitrary further edges red, such that for each $u\in V(F)$ there are in total exactly $\min\{s-1,r(u)\}$ red edges $uv$ with $u<v$.
 In the last step color all yet uncolored edges blue.
 First assume for the sake of a contradiction that there is a blue copy of $H_2$.
 Let $uv$ denote the longest edge incident to the leftmost vertex $u$ in this copy.
 Then $v$ is leftmost in a blue copy of $H_1$ and hence $r(v)\geq s+d_1-1$ due to the second step.
 In particular $v\in D_1$.
 Moreover there are $d_2-1$ vertices $z$ with $u<z<v$.
 Hence $uv$ is colored red in the first step, a contradiction as $uv$ is blue.
 Next assume that there is a red copy of $H$.
 Then its was created in the first step.
 Hence the leftmost vertex $u$ of this red copy of $H$ has $s+d_2-1$ neighbors to the right in $F$, the $s$ rightmost of which are contained in $D_1$.
 Thus $u$ is leftmost in a copy of some graph from $\cF_2$, a contradiction.
 Altogether $F\not\in\ors(H,H_2)$.
 This proves that a graph is in $\ors(H,H_2)$ if and only if it contains a copy of some $F'\in\cF_2$.
 Since each member of $\cF_2$ is in $\ors(H,H_2)$ each minimal ordered Ramsey graph of $(H,H_2)$ is in $\cF_2$.
 Therefore $(H,H_2)$ is Ramsey finite.
 
 \medskip

 Finally consider the case $i\geq 3$ and an ordered graph $F$ that does not contain copies of any member of $\cF_i$.
 We shall give a coloring of the edges of $F$ without red copies of $H$ or blue copies of $H_i$.
 By assumption we have $d_1\leq\cdots\leq d_i$. 
 Observe that there is a copy of $H_{j-1}$ in $H_{j}$ that contains the leftmost vertex of $H_{j}$ for each $j$, $2\leq j\leq i$.
 Moreover, the leftmost vertex of each $F\in\cF_j$ is contained in a copy of some $F'\in\cF_{j-1}$ in $F$, $2\leq j\leq i$.
 Recall that for each coloring of the edges of some graph $F\in\cF_j$ without red copies of $H$ there is a blue copy of $H_j$ containing the leftmost vertex of $F$.
 Hence, for each $t\in[j]$, there is also a blue copy of $H_t$ which contains the leftmost vertex of $F$ under such a coloring.
 Let $D_0=V(F)$ and for $j\in[i]$ let $D_j$ denote the set of all vertices in $F$ that are leftmost in a copy of some graph from $\cF_j$ in $F$.
 As argued above we have $\emptyset=D_{i}\subseteq D_{i-1}\cdots\subseteq D_1\subseteq D_0$.
 For $u\in V(F)$ let $h(u)$ denote the largest $j$ with $u\in D_j$.
 Color an edge $uv$, with $u<v$, red if and only if $h(u)\leq h(v)$ and there are $d_{h(u)+1}-1$ vertices $z$ with $u< z< v$.
 
 For the sake of a contradiction assume that there is a red copy $\bar{H}$ of $H$.
 Let $u$ denote the leftmost vertex in $\bar{H}$ and let $j=h(u)$.
 For each other vertex $v$ in $\bar{H}$ there are $d_{j+1}-1$ vertices $z$ with $u< z< v$, $h(v)\geq j$, and hence $v\in D_j$, as argued above.
 Thus $u$ is leftmost in a copy of some graph from $\cF_{j+1}$ in $F$, a contradiction as $h(u)=j$.
 See Figure~\ref{fig:canonicalColoring} (left).
\begin{figure}
 \centering
 \includegraphics{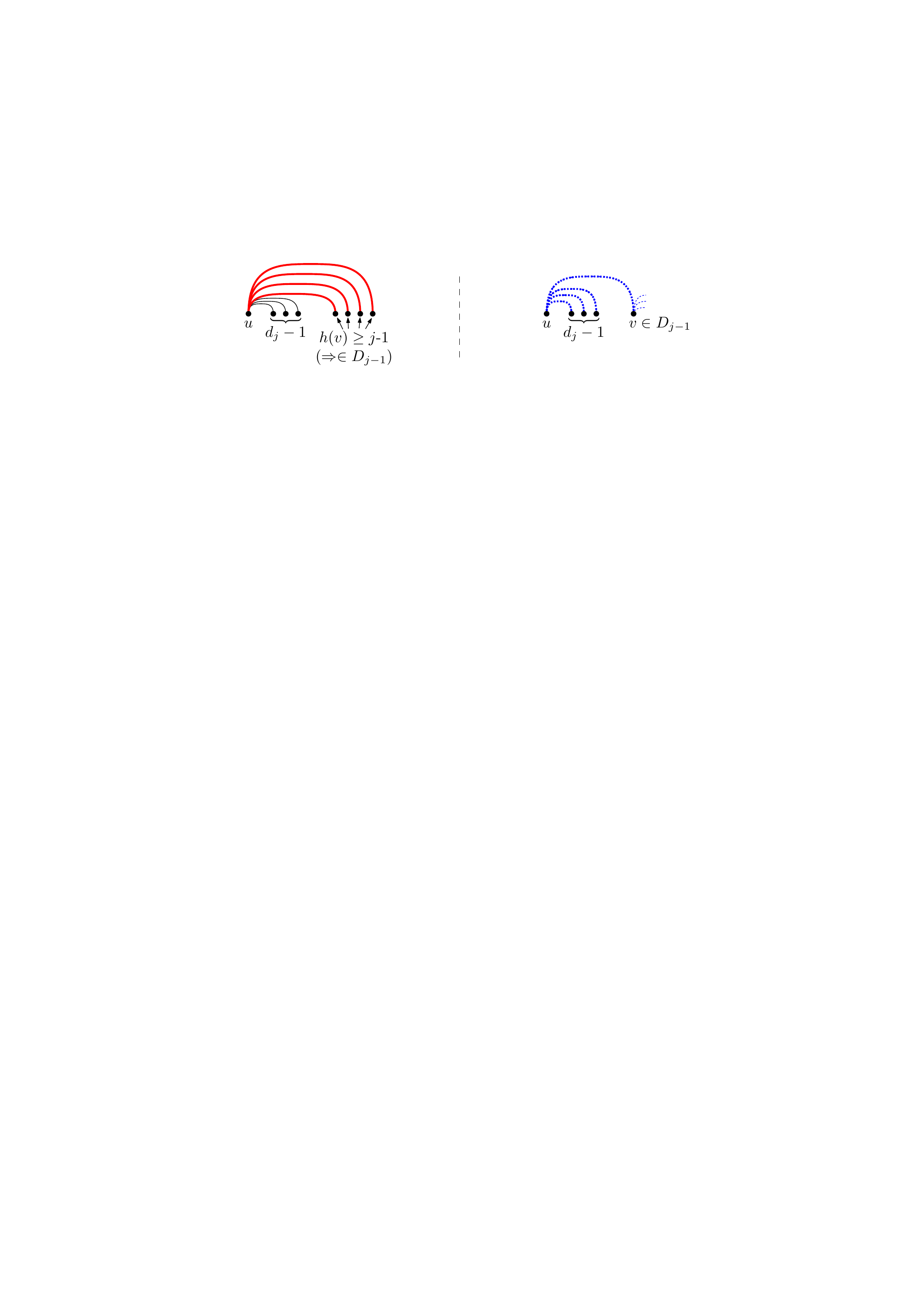}
 \caption[Illustration of coloring in proof of Theorem~\ref{thm:CaterpillarFinite}.]{All edges $uv$ with $u<v$, $h(u)=j-1$, and $h(v)\geq j-1$ are colored red if there are $d_j-1$ vertices between $u$ and $v$ (left). If $u\not\in D_j$, then $u$ is not leftmost in a red copy of $H$. Moreover if $u$ is leftmost in a blue copy of $H_j$, then $h(u)\geq j$ (and $u$ is in $D_j$) since otherwise $uv$ is colored red (right).}
 \label{fig:canonicalColoring}
\end{figure}

 Let $H_0$ denote the single vertex ordered graph.
 Next we shall prove by induction on $j$, $0\leq j\leq i$, that for each vertex $u$ which is leftmost in a blue copy of $H_j$ we have $u\in D_j$.
 This clearly holds for $j=0$.
 So consider $j>0$ and a blue copy $H''$ of $H_j$.
 Let $uv$ denote the longest edge incident to the leftmost vertex $u$ of $H''$.
 Then $v$ is leftmost in a blue copy of $H_{j-1}$ and hence $v\in D_{j-1}$ by induction.
 In particular $h(v)\geq j-1$.
 Moreover there are $d_j-1$ vertices $z$ with $u<z<v$.
 Hence $h(u)\geq j$, since otherwise $h(u)\leq h(v)$ and $d_j-1\geq d_{h(u)+1}-1$, and thus $uv$ is colored red.
 See Figure~\ref{fig:canonicalColoring} (right).
 Therefore $u\in D_j$.
 Since $D_i=\emptyset$ there is no blue copy of $H_i$.
 
 Altogether $F\not\in\ors(H,H_i)$.
 This proves that a graph is in $\ors(H,H_i)$ if and only if it contains a copy of some $F'\in\cF_i$ (since each member of $\cF_i$ is in $\ors(H,H_i)$, as argued above).
 Therefore, each minimal ordered Ramsey graph of $(H,H_i)$ is contained in $\cF_i$.
 In particular $(H,H_i)=(H,H')$ is Ramsey finite.\qed


\section{Conclusions}\label{sec:conclusion}

We study the structure of the set $\ors(H,H')$ of ordered Ramsey graphs for pairs $(H,H')$ of ordered graphs.
First of all we characterize all such pairs $(H,H')$ that have some forest in $\ors(H,H')$ in  Theorem~\ref{thm:OrderedRamseyForest}.
A pair of unordered forests has a forest as a Ramsey graph if and only if one of the forests is a star forest.
In contrast to this, we give pairs of ordered star forests that do not have any forest as a Ramsey graph.
Next Theorem~\ref{thm:OrderedRamseyPseudoforest} characterizes all pairs of connected ordered graphs that have a pseudoforest as a Ramsey graph.
Again it turns out that the pairs of ordered graphs that have pseudoforests as Ramsey graphs are much more restricted than in the unordered case.
We do not have a full answer for disconnected ordered graphs.
It might be true that Lemma~\ref{lem:noRamseyPseudoforest} covers all pairs of ordered graphs that do not have any pseudoforest as a Ramsey graph.
\begin{question}
 Which pairs of (disconnected) ordered graphs have a pseudoforest as a Ramsey graph?
\end{question}
Then we consider the question for which pairs of ordered graphs the set $\ors(H,H')$ contains only finitely many minimal elements.
The corresponding question in the unordered setting is answered whenever $H=H'$, but a complete answer in the asymmetric case is known only if one of $H$ or $H'$ is a forest (see Theorems~\ref{thm:UnorderedCycleInf}, \ref{thm:unorderedForestCycle}, \ref{thm:Faudree}).
Similar to the unordered setting we show that any ordered graph $H$ that contains a cycle is Ramsey infinite.
Moreover Corollary~\ref{cor:connectedFinite} shows that a connected ordered graph $H$ is Ramsey finite if and only if $H$ is a star with its center to the right or to the left of all its leaves (called left respectively right star).
This is in contrast to the unordered setting where a connected graph is Ramsey finite if and only if it is a star with an odd number of edges (see Theorems~\ref{thm:UnorderedCycleInf} and~\ref{thm:Faudree}).

With Theorem~\ref{thm:ordDensityRamseyInfinite} we establish a relation between the question for smallest densities of ordered Ramsey graphs and the question for Ramsey finiteness.
Using a result of R\"odl and Ruci\'nski~\cite{RR93} (Theorem~\ref{thm:avgDegRamsey}) we see that any Ramsey finite ordered graph has a pseudoforest as a Ramsey graph.
Now our results from the first part show that every Ramsey finite ordered graph is a star forest with strong restrictions on the centers of the stars, see Theorem~\ref{thm:randomApplied}.
Further we show that every Ramsey finite ordered graph which is $\chi$-unavoidable has a forest as a Ramsey graph by Theorem~\ref{thm:unavoidNoRamseyForest}.
This yields that any Ramsey finite $\chi$-unavoidable ordered graph is a forest of left stars or a forest of right stars.
We think that the assumption of $\chi$-unavoidable is not necessary here, see Conjecture~\ref{conj:OrderedFiniteForest} below.

\paragraph{Disconnected ordered graphs.}

To some extent vertex disjoint unions of (unordered) graphs are rather easy to handle with respect to their Ramsey graphs.
A Ramsey graph for a vertex disjoint union of graphs $G$ and $H$ is given by a vertex disjoint union of a Ramsey graph of $G$, a Ramsey graph of $H$, and a Ramsey graph of the pair $(G,H)$.
For ordered graphs there are many different vertex disjoint unions and we do not see a uniform way to build ordered Ramsey graphs.
This is one reason why beyond Theorems~\ref{thm:randomApplied} and~\ref{thm:unavoidNoRamseyForest} we do not have many results in the disconnected case, although we characterize all connected Ramsey finite graphs as mentioned above.
Some examples of ordered graphs which are not known to be Ramsey finite or infinite are given in Figure~\ref{fig:openOrderedRamsey}.
\begin{figure}
 \centering
 \includegraphics{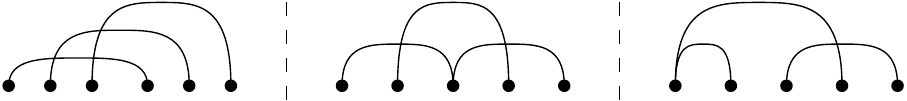}
 \caption{Ordered graphs that are not known to be Ramsey finite or infinite.}
 \label{fig:openOrderedRamsey}
\end{figure}

Most striking is our lack of understanding Ramsey graphs of ordered graphs with isolated vertices.
Suppose that $H$ is an ordered graph and $H'$ is obtained from $H$ by adding an isolated vertex.
If $F\in\ors(H)$ is minimal then a minimal graph in $\ors(H')$ is obtained by adding a suitable set of isolated vertices to $F$.
Therefore $H'$ is Ramsey infinite if $H$ is Ramsey infinite, but the reverse statement is open.
\begin{question}
 Is there a Ramsey finite ordered graph $H$ such that adding some isolated vertices to $H$ yields a Ramsey infinite ordered graph?
\end{question}
We consider the special case of intervally disjoint unions of Ramsey finite graphs in Theorem~\ref{thm:intervalUnionFinite} (which also does not cover isolated vertices).
While such a union turns out to be Ramsey finite the reverse statement is open.
\begin{question}
 Let $H$, $H'$ and $H''$ denote ordered graphs such that $(H,H'\sqcup H'')$ is Ramsey finite.
 Are both pairs $(H,H')$ and $(H,H'')$ Ramsey finite?
\end{question}
It is worth noting that the answer to the corresponding question in the unordered setting is negative.
For example let $H$ be the graph formed by a vertex disjoint union of a star on $5$ edges and a star on $2$ edges and let $H'$ denote a star on $3$ edges.
Then $(H,H')$ is Ramsey infinite~\cite{B81} and there is an integer $k$ such that adding $k$ isolated edges to $H'$ yields a Ramsey finite pair of graphs~\cite{Fa91} (see also Theorem~\ref{thm:Faudree}).

The main corollary of Theorem~\ref{thm:intervalUnionFinite} states that each pair of a monotone matching and any other ordered graph is Ramsey finite.
This is similar to Theorem~\ref{thm:unorderedForestCycle}\ref{enum:unordredMatchingFinite} stating that any pair of an (unordered) graph and some matching is Ramsey finite.
Moreover Theorem~\ref{thm:unorderedForestCycle}\ref{enum:unordredNotMatchingInfinite} states a result due to {\L}uczak~\cite{Lu94} that $(H,H')$ is Ramsey infinite for each graph $H$ which is not a matching and any other graph $H'$ which contains a cycle.
We think that also this property carries over to monotone matchings.
It might be possible to transfer the arguments from~\cite{Lu94} to ordered graphs to give a positive answer to the following question.
\begin{question}
 Let $H$ be an ordered graph that contains a cycle and let $H'$ be an ordered forest that is not a monotone matching.
 Is $(H,H')$ Ramsey infinite?
\end{question}

\paragraph{The asymmetric case.}
We think that Theorem~\ref{thm:ordDensityRamseyInfinite} generalizes to the asymmetric case as follows.
Kohayakawa and Kreuter~\cite{KK97} introduce the asymmetric $2$-density for a pair of graphs $(H,H')$ with $m_2(H)\geq m_2(H')$ given by $m_2(H,H')=\max\left\{\frac{\lvert E(H'')\rvert }{\lvert V(H'')\rvert -2+1/m_2(H)}\mid H''\subseteq H', \lvert E(H'')\rvert \geq 1\right\}$.
\begin{conjecture}\label{conj:asymmetricDensityRamseyInf}
 Let $H$ and $H'$ be an ordered graphs with $m_2(H)\geq m_2(H')$.
 If $m(F)>m_2(H,H')$ for each $F\in\ors(H,H')$, then $(H,H')$ is Ramsey infinite.
\end{conjecture}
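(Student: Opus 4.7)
The plan is to duplicate the proof of Theorem~\ref{thm:ordDensityRamseyInfinite} with the symmetric random Ramsey $1$-statement (Theorem~\ref{thm:ordRandomRamsey}) replaced by an asymmetric analog. The key auxiliary result to establish is an \emph{asymmetric ordered random Ramsey $1$-statement}: for any pair $(H,H')$ of ordered graphs with $m_2(H)\geq m_2(H')>1/2$, there is a constant $c=c(H,H')$ such that $G(n,cn^{-1/m_2(H,H')})\in\ors(H,H')$ with probability tending to $1$ as $n\to\infty$. Granting this, the remainder copies the proof of Theorem~\ref{thm:ordDensityRamseyInfinite} verbatim. Fix an integer $t$ and consider the finite set $\cF_t$ of ordered graphs on at most $t$ vertices. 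For every $F'\in\ors(H,H')\cap\cF_t$ the hypothesis gives $m(F')>m_2(H,H')$, so $pn^{1/m(F')}\to 0$; by Theorem~\ref{thm:smallSubgraphRandom} and a union bound over $\cF_t$, the random graph $G(n,p)$ almost surely contains no member of $\ors(H,H')\cap\cF_t$. Combined with the $1$-statement, $G(n,p)$ is a.a.s.\ an ordered Ramsey graph of $(H,H')$ whose minimal Ramsey subgraphs all have more than $t$ vertices; letting $t\to\infty$ shows $(H,H')$ Ramsey infinite.

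To prove the asymmetric $1$-statement I would follow the asymmetric container strategy developed in the unordered setting for the Kohayakawa--Kreuter conjecture. The central step is an asymmetric container lemma analogous to Lemma~\ref{lem:orderedRamseyCont}: from any $F\not\in\ors(H,H')$ on vertex set $[n]$ one extracts two fingerprints $P_1,P_2$ (encoding the red and blue color classes of a witnessing $2$-coloring) with $P_i\subseteq F\subseteq g_i(P_i)$, $|E(P_1)|\leq C'n^{2-1/m_2(H)}$, $|E(P_2)|\leq C'n^{2-1/m_2(H,H')}$, and $|E(g_1(P_1)\cup g_2(P_2))|\leq(1-\delta)\binom{n}{2}$. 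This should come from two applications of Theorem~\ref{thm:hyperContainerST} (one to the hypergraph of ordered copies of $H$ with $\tau_H=\Theta(n^{-1/m_2(H)})$, and one to the hypergraph of ordered copies of $H'$ with $\tau_{H'}=\Theta(n^{-1/m_2(H,H')})$), together with an asymmetric analog of Lemma~\ref{lem:superRamsey2} guaranteeing that two edge sets of $K_n$ inducing very few red copies of $H$ and very few blue copies of $H'$ respectively cannot cover more than $(1-\delta)\binom{n}{2}$ edges; the latter follows from the finiteness of $\orn(H,H,H')$ and $\orn(H,H',H')$ after the usual counting. The expected-value computation at the end of the proof of Theorem~\ref{thm:ordRandomRamsey} then goes through with $p=cn^{-1/m_2(H,H')}$ in place of $cn^{-1/m_2(H)}$.

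The hard part will be the asymmetric container step. Even in the unordered setting the Kohayakawa--Kreuter conjecture was open for two decades and was resolved in full generality only recently by Mousset, Nenadov, and Samotij; their proof is substantially more delicate than the symmetric Nenadov--Steger argument because the two colors play genuinely asymmetric roles controlled by the single parameter $m_2(H,H')$, and the balance $\tau_H\tau_{H'}$ must be tuned against the container-versus-deletion tradeoff rather than against a single density. A faithful ordered adaptation will likely require either carefully reworking the recent unordered proof while tracking ordering-dependent constants (as was done in going from Nenadov--Steger to Theorem~\ref{thm:ordRandomRamsey}), or finding a black-box reduction from the unordered statement. No such reduction is immediate: the vertex-spacing trick used to deduce the isolated-vertex case in the proof of Theorem~\ref{thm:ordRandomRamsey} rearranges a single color pattern and does not obviously force two distinct ordered patterns to appear in two different colors simultaneously.
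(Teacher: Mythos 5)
The statement you are proving is stated in the paper as a \emph{conjecture}, and the paper offers no proof of it; it only remarks (in the conclusions) that an asymmetric analog of Theorem~\ref{thm:ordRandomRamsey} would suffice, which is exactly the reduction you describe. That reduction itself is sound: given an asymmetric ordered $1$-statement at density $n^{-1/m_2(H,H')}$, your argument --- apply Theorem~\ref{thm:smallSubgraphRandom} and a union bound over the finitely many members of $\ors(H,H')$ on at most $t$ vertices, using $m(F')>m_2(H,H')$ to get $pn^{1/m(F')}\to 0$ --- is a verbatim transcription of the proof of Theorem~\ref{thm:ordDensityRamseyInfinite} and works. (In fact you need less than the sharp $1$-statement: any polylogarithmic slack in $p$ is still dominated by the polynomial gap $n^{1/m(F')-1/m_2(H,H')}$, which is the point of the paper's remark about the Gugelmann~et~al.\ bound.)

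However, the proposal does not constitute a proof, because its central ingredient --- the asymmetric ordered random Ramsey $1$-statement, equivalently the asymmetric container lemma with two fingerprints of sizes $O(n^{2-1/m_2(H)})$ and $O(n^{2-1/m_2(H,H')})$ --- is never established; you only sketch where it should come from and explicitly concede that this is ``the hard part.'' This is precisely the obstruction that makes the statement a conjecture rather than a theorem in the paper: the symmetric argument of Lemma~\ref{lem:orderedRamseyCont} does not adapt mechanically, since the two hypergraphs (copies of $H$ and copies of $H'$) must be containerized at different scales $\tau_H,\tau_{H'}$ and the supersaturation step (the asymmetric analog of Lemma~\ref{lem:superRamsey2}) must be balanced against both, which is the content of the Kohayakawa--Kreuter $1$-statement whose unordered resolution is itself a substantial recent result and whose ordered version has not been carried out anywhere. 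Two smaller points: your formulation restricts to $m_2(H')>1/2$, so pairs where $H'$ is a partial matching would need a separate argument (for those, $m_2(H,H')$ degenerates and the conjecture's hypothesis interacts with Theorem~\ref{thm:OrderedRamseyForest} and Corollary~\ref{cor:monMatching}); and, as in the proof of Theorem~\ref{thm:ordRandomRamsey}, isolated vertices in $H$ or $H'$ would need the spacing trick to be rechecked in the asymmetric setting. As it stands, the argument is a correct conditional reduction, not a proof.
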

One can see from the proof of Theorem~\ref{thm:ordDensityRamseyInfinite} that an asymmetric version of Theorem~\ref{thm:ordRandomRamsey} (on Ramsey properties of random graphs) is sufficient to prove Conjecture~\ref{conj:asymmetricDensityRamseyInf}.
Recently Gugelmann~\textit{et al.}~\cite{Gugel17} prove a slightly weaker statement for a pair $(H,H')$ of unordered graphs.
In the conclusions they claim that there is some $c$ such that for $p\geq c n^{-1/m_2(H,H')}\log(n)$ the probability that $G(n,p)$ is a Ramsey graph of $(H,H')$ tends to $1$ as $n$ tends to infinity.
Again one can  see from the proof of Theorem~\ref{thm:ordDensityRamseyInfinite} that such a bound on $p$ (in the ordered setting) is sufficient for a proof of Conjecture~\ref{conj:asymmetricDensityRamseyInf}.

Along with Conjecture~\ref{conj:asymmetricDensityRamseyInf}, a generalization of Theorem~\ref{thm:avgDegRamsey} to the asymmetric case would reveal more Ramsey infinite pairs of (ordered) graphs.
We propose the following conjecture based on the fact that each pair $(H,H')$ of graphs is Ramsey infinite provided that neither $H$ nor $H'$ is a matching and exactly one of $H$ or $H'$ contains a cycle~\cite{Lu94} (see Theorem~\ref{thm:unorderedForestCycle}).
\begin{conjecture}\label{conj:asymmetricRamseyDensity}
 Let $H$ and $H'$ be graphs with $m_2(H)\geq m_2(H')$.
 If neither $H$ nor $H'$ is a matching and at least one of $H$ or $H'$ contains a cycle, then $m(F)>m_2(H,H')$ for each $F\in\rs(H,H')$.
\end{conjecture}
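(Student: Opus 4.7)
The plan is to prove the contrapositive: given a graph $F$ with $m(F)\leq m_2(H,H')$, I would construct a $2$-coloring of $E(F)$ with no red copy of $H$ and no blue copy of $H'$, contradicting $F\in\rs(H,H')$. This mirrors the strategy of R\"odl and Ruci\'nski's symmetric argument underlying Theorem~\ref{thm:avgDegRamsey} and the deterministic side of the Kohayakawa--Kreuter programme.

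First I would pass to a subgraph $F^{*}\subseteq F$ that is critical in the sense that $m(F^{*})=m(F)$ while every proper subgraph has strictly smaller density; this yields a useful minimum-degree condition. Then I would color each edge of $F^{*}$ independently red with probability $p$ and blue with probability $1-p$, where $p$ is on the order of $\lvert V(F^{*})\rvert^{-1/m_2(H)}$---the value at which both expectations below can be balanced. For each copy $K$ of $H$ in $F^{*}$ let $A_K$ be the event that $K$ is all red, with $P(A_K)=p^{\lvert E(H)\rvert}$; for each copy $K'$ of $H'$ let $B_{K'}$ be the event that $K'$ is all blue, with $P(B_{K'})=(1-p)^{\lvert E(H')\rvert}$. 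Two such events are dependent only when the underlying copies share at least one edge of $F^{*}$. The core of the proof is then a Lov\'asz Local Lemma bookkeeping: for each edge $e\in E(F^{*})$ and each subgraph $H''\subseteq H'$, one must bound the number of copies of $H''$ in $F^{*}$ through $e$, and similarly for $H$. The hypothesis $m(F^{*})\leq m_2(H,H')$ combined with the very definition of $m_2(H,H')$ is engineered precisely so that, at the chosen $p$, the expected numbers of red $H$-copies and blue $H'$-copies through each edge are both $O(1)$. The assumptions that neither $H$ nor $H'$ is a matching and that at least one of them contains a cycle guarantee $m_2(H)\geq m_2(H')>1/2$ and $m_2(H)>1$, which is exactly what is needed to rule out the sparse Ramsey graphs provided by Lemma~\ref{lem:unorderedPseudo} and Corollary~\ref{cor:unorderedDens2Dens}.

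The main obstacle is the asymmetric bookkeeping. A single parameter $p$ must simultaneously suppress red copies of $H$ (favored by small $p$) and blue copies of $H'$ (favored by $p$ close to $1$), and to apply the LLL one has to choose weights $x_K$ and $y_{K'}$ that are not uniform but depend on a ``densest witness'' subgraph $H''$ through which the forbidden event is certified. Handling all subgraphs $H''\subseteq H'$ simultaneously is the technically delicate part, and one typically needs additional structural input such as strict $2$-balancedness of $H$, or an inductive peeling of low-density vertices in $F^{*}$, to close the estimate. An alternative and potentially cleaner route, sketched already in the conclusion of the paper, is to deduce the conjecture from an asymmetric $1$-statement for random graphs at $p\asymp n^{-1/m_2(H,H')}$ together with Conjecture~\ref{conj:asymmetricDensityRamseyInf}: a hypothetical $F\in\rs(H,H')$ with $m(F)\leq m_2(H,H')$ could be used to certify that $G(n,p)$ below the Kohayakawa--Kreuter threshold is already Ramsey with positive probability, contradicting the $0$-statement. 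I expect that combining the container-based argument behind Theorem~\ref{thm:ordRandomRamsey} with the asymmetric tuning of $p$ from Gugelmann~\textit{et al.}~\cite{Gugel17} would yield this route, provided the logarithmic overshoot in their bound can be removed under the hypotheses stated here.
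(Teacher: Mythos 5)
The statement you are addressing is Conjecture~\ref{conj:asymmetricRamseyDensity}; the paper offers no proof of it, so there is no argument of the author's to compare yours against. It is stated as an open problem precisely because the asymmetric analogue of Theorem~\ref{thm:avgDegRamsey} is not known, and the paper only records what would follow if it were true.

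Your proposal does not close this gap. In the Local Lemma route you correctly identify the crux --- choosing a single $p$ together with non-uniform weights so that the number of red $H$-copies and blue $H'$-copies through every edge of the critical subgraph $F^{*}$ is simultaneously controlled, uniformly over all subgraphs $H''\subseteq H'$ --- and then you explicitly defer it (``one typically needs additional structural input \dots to close the estimate''). That deferred estimate is the entire content of the conjecture; without it you have a research plan, not a proof. Even in the symmetric case the R\"odl--Ruci\'nski argument behind Theorem~\ref{thm:avgDegRamsey} is far from a one-line first-moment or LLL computation, and the asymmetric version is essentially the deterministic core of the Kohayakawa--Kreuter $0$-statement, which was itself open in general. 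Your second route makes this circularity explicit: deducing the conjecture from a $0$-statement at $p\asymp n^{-1/m_2(H,H')}$ reduces one open problem to another whose known proofs themselves proceed by showing that the relevant Ramsey obstructions have density exceeding $m_2(H,H')$. Moreover, the ingredients you invoke there --- an asymmetric $1$-statement and Conjecture~\ref{conj:asymmetricDensityRamseyInf} --- point in the wrong direction: the $1$-statement produces Ramsey graphs of density approaching $m_2(H,H')$ from above, and Conjecture~\ref{conj:asymmetricDensityRamseyInf} \emph{consumes} the density lower bound as a hypothesis in order to conclude Ramsey infiniteness; neither yields the bound $m(F)>m_2(H,H')$ for every $F\in\rs(H,H')$. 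What is required, and what you have not supplied, is an unconditional two-coloring of every graph $F$ with $m(F)\leq m_2(H,H')$ avoiding red $H$ and blue $H'$; until that coloring is constructed the statement remains a conjecture.
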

If Conjecture~\ref{conj:asymmetricRamseyDensity} holds, then the result from~\cite{Gugel17} mentioned above shows that all pairs of (unordered) graphs which contain a cycle are Ramsey infinite and all Ramsey finite (unordered) graphs are characterized in~\cite{Fa91} (see Theorem~\ref{thm:Faudree}), settling the main open question in the unordered setting.
A similar discussion is given in~\cite{RamseyAlgo}.

\medskip

In the ordered setting several cases remain open, even if Conjectures~\ref{conj:asymmetricDensityRamseyInf} and~\ref{conj:asymmetricRamseyDensity} hold.
First note that pairs of ordered graphs involving a matching are not covered by either of the conjectures.
We discuss such pairs in the paragraph on disconnected ordered graphs above.
Next observe that $m_2(H,H')\leq 1$ for every pair $(H,H')$ of ordered forests.
Hence Conjecture~\ref{conj:asymmetricDensityRamseyInf} covers all such pairs which do not have a pseudoforest as a Ramsey graph.
It remains to consider such pairs having a pseudoforest as a Ramsey graphs, which corresponds to the result of Theorem~\ref{thm:ordDensityRamseyInfinite} from the symmetric case.
Theorem~\ref{thm:unavoidNoRamseyForest} applies to the asymmetric case as well and shows that a Ramsey finite pair of $\chi$-unavoidable ordered forests has a forest as a Ramsey graph.
This discussion leads to the following conjecture as we think that the assumption $\chi$-unavoidable is not necessary here.
Some more evidence is provided in~\cite{RollinDiss}.
\begin{conjecture}\label{conj:OrderedFiniteForest}
 Let $(H,H')$ be a Ramsey finite pair of ordered forests. Then $\ors(H,H')$ contains a forest.
\end{conjecture}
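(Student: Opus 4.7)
The plan is to prove the contrapositive: if $(H,H')$ is a pair of ordered forests with no forest in $\ors(H,H')$, then $(H,H')$ is Ramsey infinite. Applying Theorem~\ref{thm:OrderedRamseyForest}, the hypothesis forces $(H,H')$ to fail each of cases~\ref{enum:matching}--\ref{enum:RightLeftStars}; in particular, neither $H$ nor $H'$ is a partial matching, so each contains a copy of $P_3$. Any ordered tree on at least three vertices has $2$-density exactly $1$, so $m_2(H) = m_2(H') = 1$ and hence the asymmetric $2$-density satisfies $m_2(H,H') = 1$.

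I would then follow the density route of Theorem~\ref{thm:ordDensityRamseyInfinite} and reduce the proof to two statements: (i) no pseudoforest lies in $\ors(H,H')$, i.e.\ $m(F) > 1 = m_2(H,H')$ for every $F \in \ors(H,H')$, and (ii) the asymmetric ordered random Ramsey threshold, which is exactly Conjecture~\ref{conj:asymmetricDensityRamseyInf}: there exists $c>0$ such that $G(n,p)$ with $p = c n^{-1/m_2(H,H')}$ lies in $\ors(H,H')$ with probability tending to $1$. Given both (i) and (ii), the proof of Theorem~\ref{thm:ordDensityRamseyInfinite} goes through verbatim: for each integer $t$, the small-subgraph theorem~\ref{thm:smallSubgraphRandom} applied to the finitely many ordered graphs on at most $t$ vertices of density strictly greater than $1$, combined with (ii), shows that $G(n,p)$ is simultaneously an ordered Ramsey graph yet contains no Ramsey subgraph on $\leq t$ vertices, producing minimal ordered Ramsey graphs of unbounded size.

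Statement (i) reduces to a finite structural case check. Starting from the assumption that $(H,H')$ fails all of \ref{enum:matching}--\ref{enum:RightLeftStars} of Theorem~\ref{thm:OrderedRamseyForest}, each of $H$ and $H'$ contains a copy of $P_3$, hence (in one of its three possible orderings) a left bend, a right bend, or a monotone $P_3$; one then checks case by case that these substructures, together with the constraints on the partner forest coming from the negation of cases~\ref{enum:rightForests}--\ref{enum:RightLeftStars}, always put $(H,H')$ into one of the configurations~\ref{enum:sameOrientaionPseudo}--\ref{enum:threeStar} of Lemma~\ref{lem:noRamseyPseudoforest}. A mild extension of that lemma to disconnected ordered graphs may be needed to cover every configuration, but I expect no new coloring idea is required.

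The main obstacle is (ii). I would adapt the container-based proof of Theorem~\ref{thm:ordRandomRamsey} to the asymmetric setting, working with the hypergraph whose vertex set is $E(K_n)$ and whose edges encode ordered copies of $H$ or of $H'$, with threshold $\tau$ tuned to $n^{-1/m_2(H,H')}$. This demands asymmetric ordered analogs of the supersaturation Lemma~\ref{lem:superRamsey2} and of the Ramsey container Lemma~\ref{lem:orderedRamseyCont}; the former is the delicate ingredient, essentially because in the asymmetric setting one loses the symmetry between the two color classes that makes the Ramsey counting step easy. This is the ordered counterpart of the Kohayakawa--Kreuter random Ramsey problem, whose full resolution in the unordered setting has been the subject of recent intense work, and I expect that the ordered analog can be obtained by transplanting that machinery while incurring only mild dependencies of the container constants on the number of orderings of small vertex sets.
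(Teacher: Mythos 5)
This statement is stated in the paper as an open conjecture; the paper offers no proof of it, only supporting evidence (Theorem~\ref{thm:unavoidNoRamseyForest} settles the $\chi$-unavoidable case via high-girth Ramsey graphs, and the surrounding discussion explains why the general case is left open). Your proposal therefore cannot be "the paper's argument reorganized" --- and it does not close the gap, for two concrete reasons.

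First, your step (i) is false. It is not true that a pair of ordered forests with no forest in $\ors(H,H')$ has no pseudoforest in $\ors(H,H')$: Theorem~\ref{thm:OrderedRamseyPseudoforest} exhibits exactly such pairs, e.g.\ $H=H'=$ a monotone $P_3$, for which $\ord(H,H')=1$ is attained by a proper pseudoforest (the monotone $P_5$ with the chord $u_2u_4$). So the density criterion $m(F)>m_2(H,H')=1$ for all $F\in\ors(H,H')$ simply does not hold for all pairs in the contrapositive, and the Theorem~\ref{thm:ordDensityRamseyInfinite}-style random-graph argument is unavailable precisely on the boundary cases where the conjecture is hardest. (This is also why the paper needs the separate high-girth route of Theorem~\ref{thm:unavoidNoRamseyForest} and Conjecture~\ref{conj:orderedRamseyHighGirth}: for a pair with a pseudoforest Ramsey graph one must rule out small minimal Ramsey graphs by some means other than a first-moment density count.) Second, your step (ii) is verbatim Conjecture~\ref{conj:asymmetricDensityRamseyInf} (equivalently, an asymmetric ordered analog of Theorem~\ref{thm:ordRandomRamsey}), which the paper explicitly identifies as open; invoking it is assuming an unproven statement, not proving one. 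So even setting aside the failure of (i), the proposal is a conditional reduction to two open problems, one of which (your (i)) is refuted by results already in the paper.
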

With Theorem~\ref{thm:UnavoidInfinite} we show that the reverse statement of Conjecture~\ref{conj:OrderedFiniteForest} does not hold and that the family of all Ramsey finite pairs of ordered graphs might be rather diverse (see also Conjecture~\ref{conj:unavoidableConnected}).

Theorems~\ref{thm:UnavoidInfinite} and~\ref{thm:CaterpillarFinite} deal with pairs of connected $\chi$-unavoidable ordered forests.
The only pairs of connected $\chi$-unavoidable ordered graphs that we do not cover are formed by a right (left) star and an almost increasing right (left) caterpillar with defining sequence $d_2<d_1\leq d_3\leq\cdots\leq d_i$ for some $i\geq 3$.
We conjecture that these pairs are Ramsey finite.
A proof of the case $i=3$ and $\lvert E(H)\rvert=2$ of the following conjecture is given in~\cite{RollinDiss}.
\begin{conjecture}\label{conj:unavoidableConnected}
 Let $(H,H')$ be  a pair of $\chi$-unavoidable connected ordered graphs with at least two edges each.
 Then $(H,H')$ is Ramsey finite if and only if $(H,H')$ is a pair of a right star and an almost increasing right caterpillar or a pair of a left star and an almost increasing left caterpillar. 
\end{conjecture}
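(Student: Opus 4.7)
The only remaining case of Conjecture~\ref{conj:unavoidableConnected} is the ``if'' direction for pairs $(H,H')$ where $H=\vec{S}_s$ is a right star on $s\geq 2$ edges and $H'$ is an almost increasing right caterpillar with defining sequence $d_1,\ldots,d_i$ satisfying $d_2<d_1\leq d_3\leq\cdots\leq d_i$ and $i\geq 3$ (the symmetric left case is analogous). The ``only if'' direction is immediate from Theorem~\ref{thm:UnavoidInfinite}, and the ``if'' direction for the monotone case and for $i\leq 2$ is Theorem~\ref{thm:CaterpillarFinite}.

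The plan is to extend the recursive construction used in the proof of Theorem~\ref{thm:CaterpillarFinite}. I would define a finite family $\cF_j$ of candidate minimal Ramsey graphs of $(H,H_j)$, where $H_j$ is the union of the $j$ rightmost segments of $H'$, so that $F\in\ors(H,H_j)$ if and only if $F$ contains a member of $\cF_j$. For $j\neq 2$ the recursive definition would be kept as in the original proof (with $\cF_1=\{\vec{S}_{s+d_1-1}\}$). Showing that every graph in $\cF_j$ lies in $\ors(H,H_j)$, with the additional property that the leftmost vertex of $F$ is the leftmost vertex of a blue $H_j$ under any red-$H$-free coloring, should follow by induction on $j$ exactly as in the original proof.

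The core technical challenge lies in the converse: constructing, for any $F$ containing no copy of any member of $\cF_i$, a $2$-coloring of $E(F)$ with no red $H$ and no blue $H'$. In the monotone case the rule ``color $uv$ red iff $h(u)\leq h(v)$ and there are at least $d_{h(u)+1}-1$ vertices strictly between $u$ and $v$'' (where $h(u)=\max\{j:u\in D_j\}$ and $D_j$ is the set of vertices leftmost in a copy of a member of $\cF_j$) suffices, but its analysis uses monotonicity precisely at $j=2$, $h(u)=0$, via the inequality $d_1\leq d_2$. To repair this, I would augment with a second coloring pass analogous to the $i=2$ subcase of Theorem~\ref{thm:CaterpillarFinite}: for each vertex $u$ with $h(u)=0$, recolor red enough of the short-range blue edges $uv$ (with $u<v$, $v\in D_1$, and at most $d_1-2$ vertices strictly between $u$ and $v$) so that at most $d_2-1$ such blue edges remain. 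Correspondingly, the family $\cF_2$ has to be enlarged to reflect this stricter condition, so that every vertex $u$ with $s+d_2-1$ such short-range right-neighbors, of which the $s$ rightmost lie in $D_1$, is already forced into $D_2$.

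The main obstacle is to prove that this second pass can be carried out without creating a red copy of $H$: the combined red degree at any single vertex in the range covered by the two passes must stay below $s$. I expect that a careful greedy allocation --- recoloring only the needed $d_1-d_2$ excess at each level-$0$ vertex and routing the additional red edges toward neighbors of low existing red degree --- suffices, because the first-pass coloring is sparse on the short range precisely when $h(u)=0$ (the vertex fails to be saturated by level-$1$-inducing edges in the original rule). The case $i=3$, $s=2$ is verified in~\cite{RollinDiss}, providing a concrete blueprint; the generalization to arbitrary $i$ and $s$ should be routine once the two-pass coloring and the corresponding enlargement of $\cF_2$ are set up correctly, though the bookkeeping grows with $s$ and with the gap between $d_1$ and $d_3$.
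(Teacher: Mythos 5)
The statement you are proving is stated in the paper as a \emph{conjecture}, not a theorem: the paper offers no proof of it, and explicitly records that only the smallest open case ($i=3$ and $\lvert E(H)\rvert=2$) is verified, in~\cite{RollinDiss}. So there is no proof in the paper to compare against, and the question is whether your proposal closes the gap on its own. It does not. Your reduction of the problem is correct and matches the paper's framing: the ``only if'' direction is Theorem~\ref{thm:UnavoidInfinite}, the cases $i\leq 2$ and $d_1\leq\cdots\leq d_i$ are Theorem~\ref{thm:CaterpillarFinite}, and you correctly locate the unique point where the coloring argument of Theorem~\ref{thm:CaterpillarFinite} breaks for $d_2<d_1\leq d_3\leq\cdots\leq d_i$, namely the step $j=2$, $h(u)=0$, where $d_1\leq d_2$ is needed. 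But the repair is only sketched, and the sketch leaves exactly the hard part unproved.

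Concretely, two things are asserted rather than established. First, the ``careful greedy allocation'' in your second coloring pass: a vertex $u$ with $h(u)=0$ already has all of its long-range right-edges (those with at least $d_1-1$ vertices in between) colored red by the first pass, and your second pass adds up to $d_1-d_2$ further red right-edges at $u$; you give no argument that the total red right-degree stays below $s=\lvert E(H)\rvert$, nor that the recoloring does not create red copies of $H$ centered at \emph{other} vertices. Second, and more structurally, enlarging $\cF_2$ is not a local fix: $\cF_j$ is defined recursively from $\cF_{j-1}$, so changing $\cF_2$ changes $D_2$, hence $h$, hence the first-pass coloring rule at every level, and it also reopens the forward direction (that every member of the enlarged $\cF_2$, and of the consequently enlarged $\cF_3,\ldots,\cF_i$, still lies in $\ors(H,H_j)$ with a blue $H_j$ forced at its leftmost vertex). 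Your closing claim that the generalization ``should be routine'' is not supported by anything in the proposal and sits uneasily with the fact that the author states the general case as an open conjecture and proves only $i=3$, $s=2$ elsewhere. As it stands, this is a plausible research plan, not a proof.
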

In Theorem~\ref{thm:CaterpillarFinite} we show that there are Ramsey finite pairs of ordered stars and ordered caterpillars of arbitrary diameter.
Again this is in contrast to the unordered setting where for any Ramsey finite pair $(H,H')$ of forests either one of $H$ or $H'$ is a matching or both are star forests (with additional constraints, see Theorem~\ref{thm:Faudree}).

\paragraph{Ramsey equivalence of ordered graphs.}
Finally we mention another line of research that might show an entire different behavior in the ordered setting than for unordered graphs.
Two ordered graphs are called \emph{Ramsey equivalent} if they have the same set of ordered Ramsey graphs.
This notion was introduced for graphs by Szab{\'o}~\textit{et al.}~\cite{Szabo_RamseyMinBipartite} and studied in several subsequent papers~\cite{OurRamseyEqu, BloomLiebenau, Fox_EquivClique}.
While it is easy to find Ramsey equivalent pairs of non-isomorphic graphs it remains open whether there is such a pair of connected graphs.
Surprisingly we do not know any Ramsey equivalent pair of non-isomorphic ordered graphs so far, even without the restriction on connectivity.
\begin{question}
 Are there non-isomorphic ordered graphs $H$ and $H'$ with $\ors(H)=\ors(H')$?
\end{question}
Figure~\ref{fig:K3OrderedNonEquivalent} shows that an ordered $K_3$ is not Ramsey equivalent to any ordered graph formed
by a union of $K_3$ and an isolated vertex.
\begin{figure}
 \centering
 \includegraphics{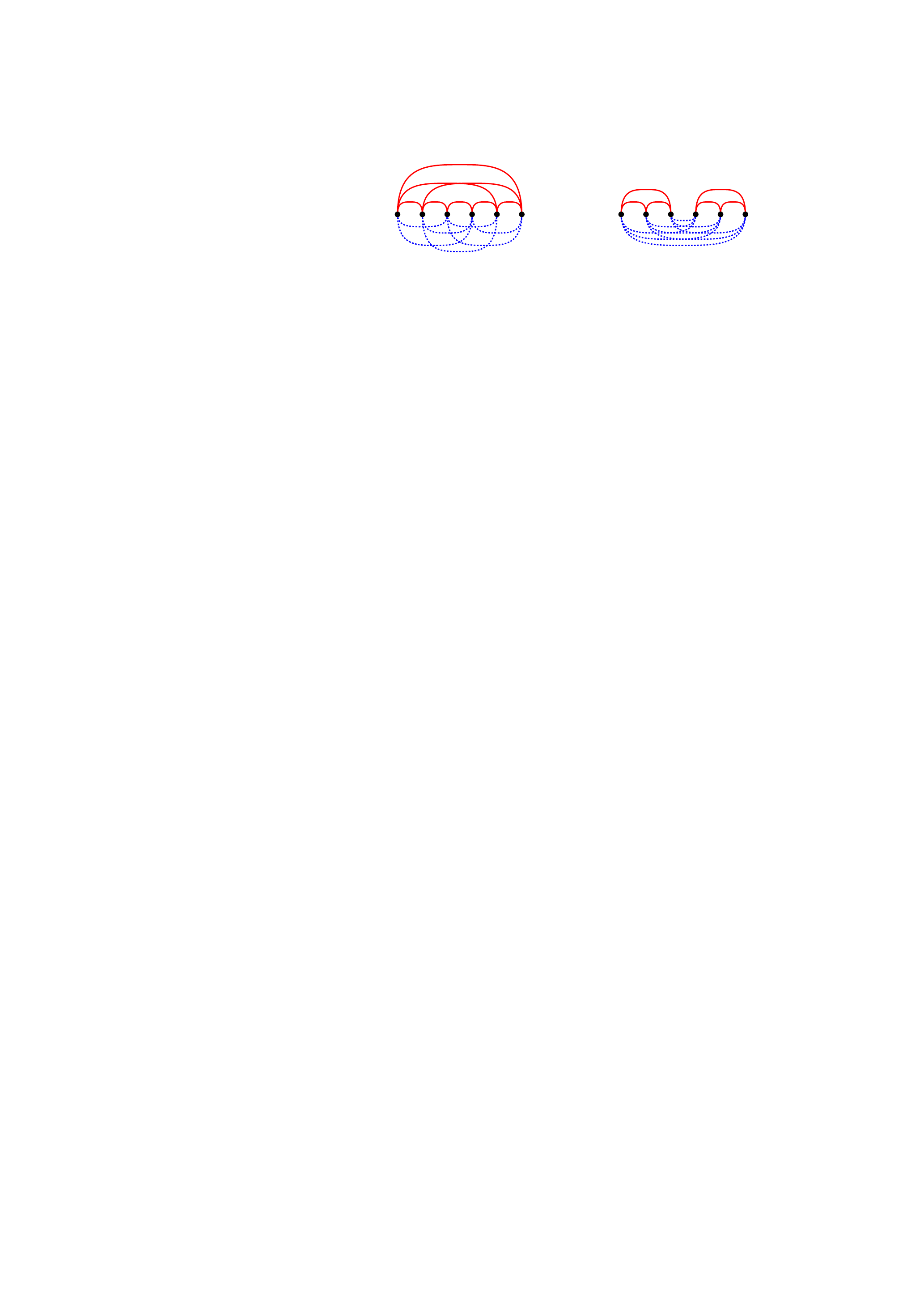}
 \caption[Colorings of an ordered $K_6$ without monochromatic copies of any ordered $K_3+ K_1$.]{Colorings of an ordered $K_6$ without monochromatic copies of $K_3\sqcup K_1$ (left) or a monochromatic copy of $K_3$ with an isolated vertex between its vertices  (right).}
 \label{fig:K3OrderedNonEquivalent}
\end{figure}
Nevertheless, we think that that $K_n$ is Ramsey equivalent to some ordering of a union of $K_n$ and an isolated vertex for sufficiently large $n$.
Further we observe here that for any ordered graph $H$ and each minimal ordered Ramsey graph $F$ of $H$ there are colorings $c_\ell$ and $c_r$ of the edges of $F$ such that each monochromatic copy of $H$ contains the leftmost vertex of $F$ under $c_\ell$ and the rightmost vertex of $F$ under $c_r$.
This shows that if $H$ and $H'$ are Ramsey equivalent and $H\subseteq H'$, then each copy of $H$ in $H'$ contains the leftmost and the rightmost vertex of $H'$.

\paragraph{Acknowledgements.}
We would like to thank Maria Axenovich and Yury Person for fruitful discussions that especially simplified Theorems~\ref{thm:ordRandomRamsey} and~\ref{thm:ordDensityRamseyInfinite}.

\bibliographystyle{abbrv}
\bibliography{lit}

\end{document}